\newcommand{\nm}{\noalign{\smallskip}}
\newcommand{\ds}{\displaystyle}
\pgfplotsset{compat=newest}
 \pgfplotsset{width=15cm}
\pgfplotsset{plot coordinates/math parser=false}
\newlength\figureheight
\newlength\figurewidth
\newtheorem{theo}{Theorem}[section]
\newtheorem{prop}[theo]{Proposition}
\newtheorem{rem}[theo]{Remark}
\newcommand{\R}{\mathbb{R}}
\newcommand{\psibf}{{\psi}}
\newcommand{\phibf}{{\phi}}
\def\nm{\noalign{\medskip}}
\newcommand{\nubf}{{\nu}}
\numberwithin{equation}{section} \numberwithin{figure}{section}
\newcommand{\dr}{\partial}
\newcommand{\g}{\nabla}
\newcommand{\bE}{\mathbf{E}}
\newcommand{\bH}{\mathbf{H}}
\newcommand{\RR}{\mathbb{R}}
\def\nm{\noalign{\medskip}}
\newtheorem{theorem}{Theorem}[section]
\newtheorem{lem}{Lemma}[section]
\newcommand{\beq}{\begin{equation}}
\newcommand{\eeq}{\end{equation}}
\DeclareMathAlphabet{\itbf}{OML}{cmm}{b}{it}
\begin{document}
\title{Optimal estimate of electromagnetic field concentration between two nearly-touching inclusions in the quasi-static regime}
\author{Youjun Deng\thanks{\footnotesize School of Mathematics and Statistics, Central South University, Changsha, Hunan, China.
(youjundeng@csu.edu.cn, dengyijun\_001@163.com).}
\and Hongyu Liu\thanks{\footnotesize Department of Mathematics, City University of Hong Kong, Hong Kong SAR, China.
(hongyu.liuip@gmail.com).} \and Liyan Zhu\thanks{\footnotesize School of Mathematics and Statistics, Central South University, Changsha, Hunan, China.(math\_zly@csu.edu.cn, math\_zly@163.com).}}

\maketitle

\begin{abstract}

We investigate the electromagnetic field concentration between two nearly-touching inclusions that possess high-contrast electric permittivities in the quasi-static regime. By using layer potential techniques and asymptotic analysis in the low-frequency regime, we derive low-frequency expansions that provide integral representations for the solutions of the Maxwell equations.
%We show that the blow up rate of the leading-order term is $|\epsilon \ln \epsilon|^{-1}$  and that the first-order term is bounded independent of the distance $\epsilon$ between inclusions.
For the leading-order term $\bE_0$ of the asymptotic expansion of the electric field, we prove that it has the blow up order of $\epsilon^{-1} |\ln \epsilon|^{-1}$ within the radial geometry, where $\epsilon$ signifies the asymptotic distance between the inclusions. By delicate analysis of the integral operators involved, we further prove the boundedness of the first-order term $\bE_1$. We also conduct extensive numerical experiments which not only corroborate the theoretical findings but also provide more discoveries on the field concentration in the general geometric setup. Our study provides the first treatment in the literature on field concentration between nearly-touching material inclusions for the full Maxwell system.  
%A variety of numerical simulations are presented to illustrate the validity of the theoretical findings in this paper.

\medskip

\noindent { {\bf Mathematics subject classification
(MSC2000):} 78A40, 35C20, 78A48, 35R30}

\noindent { {\bf Keywords:} Electromagnetic scattering, Maxwell equations, nearly-touching inclusions, composite materials,  asymptotic expansion, field concentration}

%%%%%%%%%%%%%%%%%%%%%%%%%%%%%%%%%%%%%%%%%%%%%%%%%%%%%%
\end{abstract}
\maketitle

\section{Introduction}

%Composites, as an important non-homogeneous material, are materials composed of phases or domains, each of which is homogeneous but has different properties from its neighboring domains.
%When two or more nearly-touching domians have  high-contrast material properties with its neighboring domains, strong field concentration may occur in the narrow domain between the two or more close to toching domains, which is a central issue in the theory of composite materials. In engineering design, significant field concentrations can lead to failure of composite structures or other important practical consequences \cite{BA}, moreover, the electric field blow up can be utilized to achieve subwavelength imaging and sensitive spectroscopy \cite{SYHA2018}.
In situations when two or more nearly-touching inclusions display high-contrast material properties in comparison to their background medium, there may occur strong field concentrations in the narrow areas in between the nearly-touching domains, which is a central issue in the theory of composite materials. In the engineering design, such substantial field concentrations could result in the failure of composite material structures or other severe practical outcomes  \cite{BA}. On the other hand, electric field eruptions can be employed to facilitate the realization of sub-wavelength imaging and delicate spectroscopy \cite{SYHA2018}.

%When two or more domains in close proximity exhibit high-contrast material properties relative to their neighboring domains, there may be a substantial concentration of field in the narrow domain between these nearly touching domains. This is a primary focal point in the study of composite materials. In practical designs, considerable field concentrations may potentially cause the failure of composite structures and may have other important practical ramifications \cite{BA}. Furthermore, the intensification of the electric field can be harnessed to attain subwavelength imaging and sensitive spectroscopy \cite{SYHA2018}.
Consequently, the precise quantification of physical field concentrations is of significant importance. Extensive literature has delved into it, primarily focusing on the static situations under various physical settings,
%Therefore, it is important to derive accurate quantitative characteristations of the  physical field concentration. It has been intensively studied  in extensive  literatures, mainly for the static case in different physical setups,
see \cite{bonnetier2000elliptic,DongxuEpllipticSIMA,LN03,li2000gradient} for related studies for general elliptic systems; \cite{ammari2013,BLL17,baolili,DongliARMA2019,LHZZSIAM2020,KLY15,KY19,yun2009optimal} for the Lam\'e system; \cite{AKKY21, DongXuCNS, LiXuSIAM} for the Stokes flow problem; \cite{ammari2007optimal,ammari2005gradient,bao2009gradient,bao2010gradient,ChenLiXu2021,kang2014characterization,Lek10,lim2009blow,yun2007estimates} for the conductivity problem.
%Here, we mention a few results for conductivity problem, the gradient of the voltage potential (the soultion of conductivity equation) represents the electric field, which is related to our study in the current article.
Here, we mention a few results for conductivity problem, the gradient of the voltage potential , i.e., the solution of the conductivity equation, represents the electric field, which is related to our research in this article.
When the conductivities of inclusions are away from $0$ and $\infty$, it is numerically showed by Bab$\breve{\text{u}}$ska et al. in \cite{BA} that the gradient of solutions remains bounded independent of $\epsilon$ (the distance between two inclusions). Further, it was proved by Li and Nirenberg in \cite{li2000gradient}, Bonnetier and Vogelius in \cite{bonnetier2000elliptic} that the gradient field is uniformly bounded  if the material parameter is bounded both below and above \cite{bonnetier2000elliptic,li2000gradient}.
When  the conductivity equals to $\infty$ (inclusions are perfect conductors) or $0$ (insulators), it was shown in \cite{Buca1984,Kell1993,Mar1996} that the gradient field generally becomes unbounded as $\epsilon\to 0$.
%When the conductivity of the inclusions is $\infty$(perfect
%conductivity problem), it has been proved that the generic blow up rate of the gradient field is $\epsilon^{-1/2}$ in dimension two \cite{ammari2005gradient,bao2009gradient,yun2007estimates},  order $|\epsilon \ln \epsilon|^{-1}$ in dimension three\cite{bao2009gradient,kang2014characterization} and order $\epsilon^{-1}$ in dimension greater than three \cite{bao2009gradient,bao2010gradient}.
For the perfect conductor problem, i.e., when the conductivity of the inclusion is $\infty$, it has been proven that the general blowup rate of the gradient field is $\epsilon^{-1/2}$ in two dimensions \cite{ammari2005gradient,bao2009gradient,yun2007estimates},  the order is $|\epsilon \ln \epsilon|^{-1}$ in three dimensions \cite{bao2009gradient,kang2014characterization}, and order $\epsilon^{-1}$ in dimensions greater than three  \cite{bao2009gradient,bao2010gradient}.
For more works on the perfect conductivity problem and closely related
works, see e.g.\cite{lim2009blow} and the references therein.
%For more research on the perfect conductor problem and closely related studies, please see for example (cite{lim2009blow}) and the references.
%When the conductivity of the inclusions is $0$(insulating
%conductivity problem), for two dimensions case, it is dual to the perfectly conducting case(by means of the harmonic conjugation), the
%optimal blow-up rate is also $\epsilon^{-1/2}$,
For the insulator conductivity problem, i.e., when the conductivity of the inclusion is $0$, in two dimensions, it is dual to the perfectly conducting case (by means of the harmonic conjugation), and the optimal blow-up rate is also $\epsilon^{-1/2}$, see\cite{ammari2007optimal,ammari2005gradient}, furthermore, Yun extended in \cite{yun2007estimates,yun2009optimal} the results allowing two inclusions to be any bounded strictly
convex smooth domains. For the higher dimensional case, Li and Yang in \cite{li2022gradient} proved for some generic constant $\beta\in\mathbb{R}_+$, the blowup rate decreases to $\epsilon^{-1/2+\beta}$. %When the inclusions are balls, the constant $\beta$ was explicitly given in \cite{weinkove2022insulated,dong2021optimal} and converges to $1/2$ as $n \rightarrow \infty$.
Yun in \cite{Yun2016} proved the optimal blow-up
rate on the $\epsilon$ -- segment connecting two spherical inclusions in three dimensions is $\epsilon ^{\frac{\sqrt{2}-2 }{2} }$, Dong, Li and Yang in \cite{dong2022optimal} proved that when the inclusions are balls, the optimal value of  $\beta$  is  $\left[-(n-1)+\sqrt{(n-1)^{2}+4(n-2)}\right] / 4 \in(0,1 / 2)$  in dimensions  $n \geq 3$.  When the conductivity of two inclusions with different signs, Dong and Yang in \cite{Dongyang} proved that the gradient and higher order derivatives are bounded
independent of $\epsilon$ in two dimensions, more special, when one inclusion is an insulated and the other one is a perfect, the derivatives of any order is bounded for any dimensions $n \geq 2$ and general smooth strictly convex inclusions.

As mentioned above, most of the existing literature focuses on the static case, that is, the frequency is zero. In the recent advances of composite materials, it is important to consider the interactions between quasistatic wavefields and high contrast inclusions closely spaced, see  \cite{AZ, LLZ,MBDL} and the references cited therein. Therefore, it is natural to consider field concentration under this new circumstance.
Recently, Deng et al. \cite{deng2022gradient,deng2023gradient,DKLZ23} established gradient estimates for two nearly touching high contrast inclusions related to the Helmholtz system under the quasi-static case. However, no relevant study targeting the Maxwell equations exists so far.

%As mentioned above, most of the existing studies in the literature focus on static scenarios, i.e., where the frequency is taken to be zero. In the state-of-the-art development on composite materials, it is important to consider quasi-static wavefields interacting with closely spaced high-contrast material inclusions, see \cite{AZ, LLZ,MBDL}  and their references cited therein. It is therefore natural to consider field concentration in this new scenario.
%Recently, Deng, et al. \cite{deng2022gradient,deng2023gradient,DKLZ23}  established gradient estimates between two nearly-touching high-contrast inclusions associated with the Helmholtz system in the quasi-static. However, there is no related research for Maxwell equation.

In this paper, we present the blow-up estimate for the change in the electric field in the Maxwell system according to the distance between a pair of nearly touching spherical conductors. Our analysis relies on the use of vector layer potential theory to represent the solution, carrying out asymptotic analysis through low-frequency expansion, and estimating the leading term and linear term of the expansion. To the best of our knowledge, our work is the first result in the estimation of the Maxwell system.

%In this paper, we present the blow-up estimate for the electric field in Maxwell system with respect to the distance in the narrow region between a pair of closely spaced spherical conductors. Our analysis rely on using vectorial layer potentials to represent solutions, performing an asymptotic analysis in terms of low frequency. As far as we know, our work is the first result of field estimation  for Maxwell equation.

%The organization of this paper is as follows.
The rest of the paper is organized as follows.
In Section 2, we introduce the mathematical setup of the research. The main theoretical results of this paper are presented in Section 3. In Section 4, we review some useful results of the layer potential techniques of the Maxwell equations. Section 5 derives the asymptotic expansions of operators, potentials, and solutions. In Section 6, we prove the main theorem by deriving the optimal estimation of the leading terms of the electric field and obtaining the boundedness of the first-order terms. Section 7 is devoted to the numerical results.
%The rest of the paper is organized as follows. In Section 2, we introduce the mathematical setup of our study. In Section 3, we present the main results in this paper. In Section 4, we
%recall some useful results on layer potential techniques for Maxwell’s equations. Section 5 is devoted to the deriving the asymptotic expansions of operators, potentials and solutions.
%In Section 6, we prove the main theorem by deriving the optimal  estimate of leading-order term of electric field and obtaining the Boundedness of first-order term.
%A variety of numerical simulations are presented to illustrate the validity of our results in Section 7.

\section{Mathematical setup}

In this Section, we present the mathematical setup of our study in Sections 2--6, where we mainly focus on the radial case. In Section 7 we shall consider the general geometric case. 

We assume that  $ B_{1} $ and  $B_{2}$ are two conducting spheres embedded in $\mathbb{R}^{3}$, as shown in Figure \ref{model},  which have the the radius $r \in\mathbb{R}_+$, the centers $\mathbf{c}_1$, $\mathbf{c}_2 \in\mathbb{R}^3$, respectively, and separated by a distance $\epsilon$,
\begin{equation*}%\label{eq:cc1}
\mathbf{c}_1=(-r-\frac{\epsilon}{2}, 0,0)\quad \mbox{and} \quad \mathbf{c}_2=(r+\frac{\epsilon}{2}, 0,0).
\end{equation*}
Here, $\epsilon:=\mathrm{dist}(B_1, B_2)$. Throughout we set $\epsilon\ll 1$. Denote $D=B_1 \cup B_2$. Physically,  $B_j, j=1,2$  denote bounded inclusions that is embedded inside a homogeneous medium in  $\mathbb{R}^3 \backslash \bar{D}$. The electromagnetic (EM) medium is characterised by the electric permittivity $ \varepsilon$ , magnetic permeability  $\mu $ and electric conductivity  $\sigma$.
\begin{figure}[!htpb]
	\centering
	{\includegraphics[width=1\textwidth]{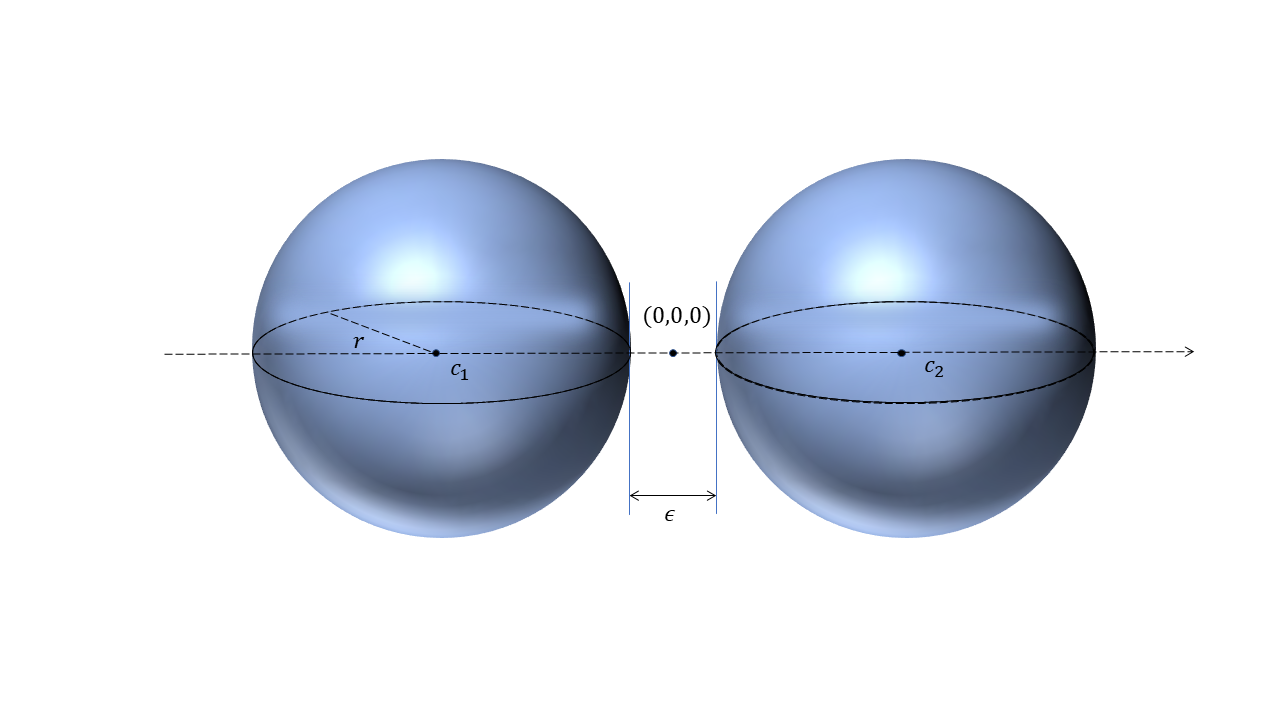}}% lleHere is how to import EPS art
	\caption{\label{model} Model: Two close-to-touching spheres embedded in the background. }
\end{figure}
Throughout, we assume that $\varepsilon=\mu=1$ in $\mathbb{R}^3\setminus\overline{D}$, $\sigma=0 $ in $ \mathbb{R}^{3},$  $\varepsilon=\varepsilon_c$ and $\mu=1$ in $D$, where $\varepsilon_c\in\mathbb{R}_+$.  Define $ k:=\omega \sqrt{\varepsilon \mu}$  to be the wavenumber with respect to a frequency $ \omega \in \mathbb{R}_{+}$.

Assume the incident field $(\bE^{i}, \bH^{i})$ is given by the electromagnetic plane waves as follows
\begin{equation}\label{in}
\mathbf{E}^{{i}}(\mathbf{x}, \omega, \mathbf{d}, \mathbf{q}):=\frac{1}{\sqrt{\varepsilon}} \mathbf{p} e^{\mathrm{i} k \mathbf{x} \cdot \mathbf{d}}, \quad \mathbf{H}^{{i}}(\mathbf{x}, \omega, \mathbf{d}, \mathbf{q}):=\frac{1}{\sqrt{\mu}}(\mathbf{d} \times \mathbf{p}) e^{\mathrm{i} k \mathbf{x} \cdot \mathbf{d}},
\end{equation}
where $\mathrm{i}:=\sqrt{-1}$, $\mathbf{d}=\left ( d_1,d_2,d_3 \right )  \in \mathbb{S}^{2}:=\left\{\mathbf{x} \in \mathbb{R}^{3}:|\mathbf{x}|=1\right\} $ signifies the incident direction and  $\mathbf{p}=\left ( p_1,p_2,p_3 \right )  \in \mathbb{R}^{3} $  denotes a polarisation vector. It holds that  $\mathbf{p} \perp \mathbf{d}$. Let  $\mathbf{E}(\mathbf{x}, \omega, \mathbf{d}, \mathbf{q}) $ and $ \mathbf{H}(\mathbf{x}, \omega, \mathbf{d}, \mathbf{q}) $, respectively, denote the total electric and magnetic fields and they satisfy the time-harmonic Maxwell equations:
\begin{equation}
\label{eq:maxwell} \left\{
\begin{array}{ll}
\nabla \times \bE = \mathrm{i} \omega \mu \bH &  \mbox{in} \quad \mathbb{R}^3, \\
\nabla  \times \bH= - \mathrm{i} \omega \varepsilon \bE & \mbox{in} \quad \R^3, \\
{[}{\nu} \times \bE]= [{ \nu} \times \bH] = 0 & \mbox{on} \quad \dr {D},
\end{array}
\right.
\end{equation}
  subject to the Silver-M\"{u}ller radiation condition:
$$\lim_{|\mathbf{x}|\rightarrow\infty} |\mathbf{x}|(\sqrt{\mu} (\bH- \bH^{i}) \times\hat{\mathbf{x}}-\sqrt{\varepsilon} (\bE-\bE^{i}))=0,$$
where $\hat{\mathbf{x}} = \mathbf{x}/|\mathbf{x}|$. Here,  $\nu$ signifies the exterior unit normal vector to the boundary of the concerned domain  $D$,  $[{\nubf} \times{\bE}]$ and $[{\nubf} \times{\bH}]$ denote the jump of ${\nubf} \times{\bE}$
and ${\nubf} \times{\bH}$ along $\dr D$, namely,
\begin{equation*}
[\nubf\times{\bE}]=(\nubf\times \bE)^+ -(\nubf \times \bE)^-,\quad [\nubf\times{\bH}]=(\nubf\times \bH)^+ -(\nubf\times \bH)^-.
\end{equation*}

In this paper, we focus on examining the behavior of electric $\bE$ in the narrow region between the inclusions. Specifically, we consider the quasi-static regime. In the following discussions, we assume that $\omega\ll 1$ 
and $\varepsilon_c=O(\frac{1}{\omega})=\frac{\tilde{C} }{\omega}, \tilde{C} $ is a constant independent of $\varepsilon_c$ and $\omega$. To this end, we suppose $B_1$ and $B_2$ are inclosed by a bounded domain $\Omega$ and denote by $\| \bE\|_{L^{\infty}(\Omega\setminus\overline{D})}$ the $L^{\infty}$ norm of the electric field outside the two spheres.

\section{Main result}
Here is our main theoretical result in this paper.
\begin{theorem}\label{th:main01}
Suppose $\omega \ll 1$, and $\varepsilon_c=O(\frac{1}{\omega})=\frac{\tilde{C} }{\omega}$. Let $\bE\in H_{loc}\left ( curl;\mathbb{R}^3 \right ) $ be a solution of \eqref{eq:maxwell}, for sufficiently small $\epsilon>0$,  there exist positive constants $C_1$, $C_2$ depending only on $r$ and $\Omega$ such that
\begin{equation}\label{main}
\begin{aligned}
\|\bE\|_{L^\infty(\Omega\backslash \overline{B_1\cup B_2})}  \sim C_1 r|p_1| \frac{1}{\epsilon |\ln \epsilon|}+C_2\omega+O(\omega^2).
\end{aligned}
\end{equation}
\end{theorem}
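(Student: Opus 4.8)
The plan is to combine the vector layer-potential representation recalled in Section 4 with the low-frequency expansion developed in Section 5, and then to estimate separately the first two coefficients of the resulting series. I would first represent the scattered electromagnetic field through electric and magnetic vector potentials supported on $\dr D$, so that the transmission conditions $[\nubf\times\bE]=[\nubf\times\bH]=0$ reduce to a boundary integral system for the surface densities. Expanding the integral operators in powers of $\omega$ --- and, crucially, tracking the resonant scaling $\varepsilon_c=\tilde C/\omega$, which keeps $\omega\varepsilon_c=O(1)$ --- produces a hierarchy of boundary equations whose successive solutions define the coefficients in $\bE=\bE_0+\omega\bE_1+O(\omega^2)$. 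The asserted estimate \eqref{main} then reduces to two claims: (i) $\|\bE_0\|_{L^\infty(\Om\setminus\ol D)}\sim C_1 r|p_1|\,(\epsilon|\ln\epsilon|)^{-1}$, and (ii) $\|\bE_1\|_{L^\infty(\Om\setminus\ol D)}\le C_2$, after which substituting into the expansion and absorbing $\omega\|\bE_1\|_{L^\infty}\le C_2\omega$ yields the stated formula.

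For claim (i), I expect the leading coefficient to decouple into an electrostatic transmission problem. Since $k=\omega$ outside and $k_c^2=\omega^2\varepsilon_c=\tilde C\omega$ inside, both wavenumbers tend to zero, so $\bE_0$ is curl-free and divergence-free and may be written as $\bE_0=\g u_0$; moreover the contrast $\varepsilon_c=\tilde C/\omega\to\infty$ forces the perfectly conducting limit, in which each sphere is equipotential and $u_0$ solves the classical two-sphere conductivity problem driven by the uniform field $\lim_{\omega\to0}\bE^i$. Because the two centers lie on the $x_1$-axis and the gap singularity is dipolar along that axis, only the axial component $p_1$ of the polarisation survives in the singular part, which accounts for the factor $|p_1|$, while the transverse components contribute bounded fields. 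I would then use the radial geometry --- bispherical coordinates, or equivalently an image/reflection series --- to locate the maximum on the segment joining the centers and to extract both the sharp rate $(\epsilon|\ln\epsilon|)^{-1}$, the three-dimensional signature of the perfectly conducting case, and the constant $C_1 r$; establishing the matching lower bound (the ``$\sim$'' rather than a one-sided estimate) requires evaluating the field precisely at the narrowest point of the gap.

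For claim (ii), the first-order coefficient $\bE_1$ solves a boundary integral equation whose data involve $\bE_0$ together with the first-order corrections of the layer operators. I anticipate this to be the main obstacle: since $\bE_0$ already blows up like $(\epsilon|\ln\epsilon|)^{-1}$, a crude bound would make $\bE_1$ blow up as well, and the whole point is to exhibit cancellations that leave a quantity bounded uniformly in $\epsilon$. This is precisely the \emph{delicate analysis of the integral operators} announced in the introduction: one must isolate the singular kernels concentrated near the touching point and show that their action on the singular density associated with $\bE_0$ --- once the algebraic constraints imposed by the transmission conditions are exploited --- produces only an $O(1)$ remainder. With (i) and (ii) established, the asymptotic formula \eqref{main} follows.
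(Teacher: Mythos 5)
Your overall architecture coincides with the paper's: represent the field by vector layer potentials, expand in powers of $\omega$ under the resonant scaling $\varepsilon_c=\tilde C/\omega$, and reduce \eqref{main} to your two claims (i) and (ii). Claim (i) is handled essentially as in the paper: one shows $\nabla\times\bE_0=\nabla\cdot\bE_0=0$, writes $\bE_0=\nabla u$, and uses the transmission condition $(\nu\cdot\varepsilon\bE)^+=(\nu\cdot\varepsilon_c\bE)^-$ together with $\varepsilon_c=O(1/\omega)$ to force $(\nu\cdot\bE_0)^-=0$, hence $\nabla u=0$ in $D$ and $u$ solves the two-sphere perfect-conductor system; the paper then simply quotes the two-sided estimate of Theorem 2.1 of \cite{lim2009blow} rather than redoing the bispherical/image analysis you sketch, which is a presentational difference only.

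The genuine gap is in claim (ii). You correctly observe that a crude bound would let $\bE_1$ inherit the blow-up of $\bE_0$ and that a cancellation must be exhibited, but your plan --- ``isolate the singular kernels concentrated near the touching point and show that their action on the singular density produces only an $O(1)$ remainder'' --- is a statement of the difficulty, not an argument, and it is not the mechanism that makes the proof work. The paper's mechanism is a reflection-symmetry argument specific to the radial geometry: setting $\tilde{\mathbf{x}}=-\mathbf{x}$, one tracks the parity of every density through the hierarchy of boundary equations and proves $\nabla_{\partial B_1}\cdot\psibf^1(\mathbf{x})=\nabla_{\partial B_2}\cdot\psibf^1(\tilde{\mathbf{x}})$ (even symmetry), in contrast with the odd symmetry of $\nabla_{\partial D}\cdot\psibf^0$. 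Evenness implies that $\acute{{S}}_D^0[\nabla_{\partial D}\cdot\psibf^1]$ takes the same value on $\partial B_1$ and $\partial B_2$, i.e.\ there is \emph{no potential gap} between the two spheres --- and the potential gap is precisely what drives the $(\epsilon|\ln\epsilon|)^{-1}$ singularity in the perfect-conductor problem (cf.\ Theorem 2.1 of \cite{KLY13}); hence $\nabla\acute{{S}}_D^0[\nabla_{\partial D}\cdot\psibf^1]$ stays bounded uniformly in $\epsilon$. The remaining contribution $\nabla\times\acute{\mathcal{A}}_D^0[\phibf^1]$ is controlled by showing $\|\phibf^1\|_{\mathrm{TH}(\mathrm{div},\partial D)}$ is bounded and invoking a maximum principle. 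Without identifying this parity structure (or an equivalent explicit cancellation), your step (ii) does not go through, so the proposal as written does not establish the boundedness of $\bE_1$ and hence does not yield the $C_2\omega$ term in \eqref{main}.
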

\begin{rem}
Theorem \ref{main} characterizes the leading order blow up rate is $\frac{1}{\epsilon |\ln \epsilon|}$, which is in accordance with the electric filed gradient estimate for three dimensional Laplace system. We want to also mention that $C_2$ is a constant which does not depend on $\epsilon$, i.e., the first order term does not blow up.
\end{rem}

\section{Boundary integral operators}\label{sec2}
We start by recalling  some known properties of the boundary integral operator, which will be used in Section \ref{sec22} to derive the asymptotic expansion of the electric field.
%We start by recalling some properties about boundary integral operators.

\subsection{Definitions}

In what follows, we let  $\nabla_{\dr D}\cdot$ denote the surface divergence. Denote by $L_T^2(\dr D):=\{\phibf\in {L^2(\dr D)}^3, \nubf\cdot \phibf=0\}$. Let $H^s(\partial D)$ be the usual Sobolev space of order $s$ on $\partial D$. We often use the spaces
\begin{align*}
\mathrm{TH}({\rm div}, \dr D):&=\Bigr\{ {\phi} \in L_T^2(\partial D):
\nabla_{\partial D}\cdot {\phi} \in L^2(\partial D) \Bigr\},\\
\mathrm{TH}({\rm curl}, \dr D):&=\Bigr\{ {\phi} \in L_T^2(\partial D):
\nabla_{\partial D}\cdot ({\phi}\times {\nu}) \in L^2(\partial D) \Bigr\},
\end{align*}
equipped with the norms
\begin{align*}
&\|{\phi}\|_{\mathrm{TH}({\rm div}, \dr D)}=\|{\phi}\|_{L^2(\dr D)}+\|\nabla_{\dr D}\cdot {\phi}\|_{L^2(\dr D)}, \\
&\|{\phi}\|_{\mathrm{TH}({\rm curl}, \dr D)}=\|{\phi}\|_{L^2(\dr D)}+\|\nabla_{\dr D}\cdot({\phi}\times \nu)\|_{L^2(\dr D)}.
\end{align*}

For $k>0$, the fundamental outgoing solution $\Gamma^k$ to the Helmholtz
operator $(\Delta+k^2)$ in $\mathbb{R}^3$ is given by
\begin{equation}\label{Gk} \ds\Gamma^k
({\mathbf{x}}) = -\frac{e^{\mathrm{i}k|{\mathbf{x}}|}}{4 \pi |{\mathbf{x}}|},\quad \mathbf{x}\ne0.
\end{equation}
For a density $\Phi \in \mathrm{TH}({\rm div}, \dr D)$, the
vectorial single layer potential was introduced in (\ref{Gk}) by
\begin{equation*}%\label{defA}
\ds\mathcal{A}_D^{k}[\Phi]({\mathbf{x}}) := \int_{\dr D} \Gamma^k({\mathbf{x}}-{\mathbf{y}})
\Phi({\mathbf{y}}) d \sigma({\mathbf{y}}), \quad {\mathbf{x}} \in \mathbb{R}^3.
\end{equation*}
For a scalar density $\varphi \in L^2(\dr D)$, the single layer
potential and double layer potential are defined by
\begin{equation*}%\label{defS}
{S}_D^{k}[\varphi]({\mathbf{x}}) := \int_{\dr D} \Gamma^k({\mathbf{x}}-{\mathbf{y}}) \varphi({\mathbf{y}}) d \sigma({\mathbf{y}}), \quad {\mathbf{x}} \in \mathbb{R}^3.
\end{equation*}
\begin{equation*}%\label{defK}
D_{D}^{k}[\varphi]({\mathbf{x}})=\int_{\partial D} \frac{\partial \Gamma^{k}({\mathbf{x}}-{\mathbf{y}})}{\partial \nu_{{\mathbf{y}}}} \varphi({\mathbf{y}}) \mathrm{d} \sigma({{\mathbf{y}}}), \quad {\mathbf{x}} \in \mathbb{R}^{3}.
\end{equation*}
Here  $\nu_{{\mathbf{x}}} \in \mathbb{S}^{2} $ signifies the exterior unit normal vector to the boundary of the concerned domain  $D$  at  ${\mathbf{x}}$.
The Neumann-Poincar\'e operators $ K_{D}^{k},\left(K_{D}^{k}\right)^{*}$  are bounded from  $L^{2}(\partial B) $ into $ L^{2}(\partial B) $ and given by
\begin{equation*}%\label{N}
\begin{aligned}
K_{D}^{k}[\varphi](\mathbf{x}) & =\int_{\partial D} \frac{\partial \Gamma^{k}({\mathbf{x}}-\mathbf{y})}{\partial \nu_{{\mathbf{y}}}} \psi({\mathbf{y}}) \mathrm{d} \sigma({{\mathbf{y}}}), {\mathbf{x}} \in \partial D, \\
\left(K_{D}^{k}\right)^{*}[\varphi]({\mathbf{x}}) & =\int_{\partial D} \frac{\partial \Gamma^{k}({\mathbf{x}}-{\mathbf{y}})}{\partial \nu_{{\mathbf{x}}}} \psi({\mathbf{y}}) \mathrm{d} \sigma({{\mathbf{y}}}), {\mathbf{x}} \in \partial D .
\end{aligned}
\end{equation*}

For a density $\Phi \in \mathrm{TH}({\rm div}, \dr D)$,  the  boundary operators $ \mathcal{M}_D^k$ and $\mathcal{L}_D^k$ are defined as:
\begin{equation*}%\label{defM}
\begin{aligned} \mathcal{M}_D^k:
\mathrm{L}_T^2 (\partial D)  & \longrightarrow \mathrm{L}_T^2 (\partial D)  \\
\Phi & \longmapsto \mathcal{M}^k_D[\Phi]= \nubf({\mathbf{x}})  \times \g \times \int_{\dr D} \Gamma^k({\mathbf{x}},{\mathbf{y}})  \Phi({\mathbf{y}}) d\sigma({\mathbf{y}}),
\end{aligned}
\end{equation*}
\begin{equation*}%\label{defL}
\begin{aligned} \mathcal{L}_D^k:
\mathrm{TH}({\rm div}, \dr D) & \longrightarrow \mathrm{TH}({\rm div}, \dr D) \\
\Phi & \longmapsto \mathcal{L}^k_D[\Phi]= \nubf({\mathbf{x}})  \times \left( k^2\mathcal{A}_D^k[\Phi]({\mathbf{x}}) + \g{S}_D^k[\g_{\dr D} \cdot \Phi]({\mathbf{x}})\right).
\end{aligned}
\end{equation*}

\subsection{Boundary integral  identities}

Let us first recall the following jump formula, we refer to \cite{ADM16,kangbook} and references therein for details.
\begin{prop}\label{propjumpS}
For a scalar density $\varphi \in L^2(\dr D)$, the single-layer potential is continuous on $\mathbb{R}^3$, and the normal derivative satisfies the following jump condition :
\begin{equation*}
\frac{\dr}{\dr \nu} \left( {S}_{D}^k [\varphi] \right)^{\pm} = (\pm \frac{1}{2} I + ({K}^k_{  D})^* )[\varphi]\, \quad \mbox{ on } \dr D,
%\label{jump_condition}
\end{equation*}
where ${I}$ represents the identity operator on  $\varphi \in L^2(\dr D)$, the superscripts $\pm$ indicate the limits from outside and inside $D$ respectively.
\end{prop}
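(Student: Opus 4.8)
The plan is to reduce the Helmholtz jump relation to the classical one for the Laplace single-layer potential, exploiting that the two fundamental solutions differ only by a smooth term. First I would split $\Gamma^k(\mathbf{x}) = \Gamma^0(\mathbf{x}) + R^k(\mathbf{x})$, where $\Gamma^0(\mathbf{x}) = -\frac{1}{4\pi|\mathbf{x}|}$ is the Laplace fundamental solution and
\[
R^k(\mathbf{x}) = -\frac{e^{\mathrm{i}k|\mathbf{x}|}-1}{4\pi|\mathbf{x}|}.
\]
Expanding $e^{\mathrm{i}k|\mathbf{x}|} = 1 + \mathrm{i}k|\mathbf{x}| + O(|\mathbf{x}|^2)$ shows that $R^k$ extends to a real-analytic function across $\mathbf{x}=0$ (with value $-\frac{\mathrm{i}k}{4\pi}$ at the origin). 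Consequently the operator $\varphi \mapsto \int_{\dr D} R^k(\mathbf{x}-\mathbf{y})\varphi(\mathbf{y})\,d\sigma(\mathbf{y})$ has a smooth kernel, so it produces a function that is $C^1$ (indeed smooth) across $\dr D$, contributing neither a jump in its trace nor a jump in its normal derivative. This isolates the problem: all singular behaviour is carried by $S_D^0$.

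Next I would invoke the classical jump relation for the harmonic single-layer potential, namely $\frac{\dr}{\dr\nu}\bigl(S_D^0[\varphi]\bigr)^{\pm} = (\pm\tfrac12 I + (K_D^0)^*)[\varphi]$ on $\dr D$, together with the continuity of $S_D^0[\varphi]$ across $\dr D$. Adding back the smooth remainder, the trace stays continuous, while for the normal derivative I would observe that
\[
(K_D^k)^*[\varphi](\mathbf{x}) = (K_D^0)^*[\varphi](\mathbf{x}) + \int_{\dr D} \frac{\dr R^k(\mathbf{x}-\mathbf{y})}{\dr\nu_{\mathbf{x}}}\,\varphi(\mathbf{y})\,d\sigma(\mathbf{y}),
\]
the last integral being exactly the (jump-free, two-sided-equal) normal derivative of the smooth part. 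Combining the two contributions yields $\frac{\dr}{\dr\nu}(S_D^k[\varphi])^{\pm} = (\pm\tfrac12 I + (K_D^k)^*)[\varphi]$, which is the claim.

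The real content therefore lies in the harmonic jump relation, which I would establish first for a dense class of continuous (say H\"older) densities and then extend to $\varphi\in L^2(\dr D)$ by the $L^2$-boundedness of $(K_D^0)^*$ together with the mapping property $S_D^0 : L^2(\dr D)\to H^1_{loc}(\mathbb{R}^3)$. For continuous densities the factor $\pm\tfrac12$ is obtained by a local analysis near a boundary point: flattening $\dr D$ and isolating the singularity of $\nabla_{\mathbf{x}}\Gamma^0(\mathbf{x}-\mathbf{y})$, the contribution of a small surface patch to the normal derivative reduces to half of the total solid angle $4\pi$, producing the $\tfrac12$, whereas the tangential components of the gradient pass continuously to the boundary. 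I expect the main obstacle to be the rigorous justification of this limit for merely $L^2$ densities: one must control the principal-value singular integral defining $(K_D^0)^*$ and show that the nontangential limits of $\nabla S_D^0[\varphi]$ exist and agree with the $L^2$ traces, which rests on Calder\'on--Zygmund theory on $\dr D$. Since here $\dr D$ consists of two spheres and is smooth, these technicalities simplify considerably, and the statement follows as cited from \cite{ADM16,kangbook}.
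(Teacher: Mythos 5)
The paper does not actually prove this proposition: it is recalled verbatim from the cited references \cite{ADM16,kangbook}, so there is no in-paper argument to compare against. Your strategy --- split $\Gamma^k=\Gamma^0+R^k$, push all the singularity into the harmonic single-layer potential, and quote the classical Laplace jump relation plus Calder\'on--Zygmund theory for $L^2$ densities --- is exactly the standard route taken in those references, and the overall architecture is sound.

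One justification, however, is wrong as stated: $R^k(\mathbf{x})=-\frac{e^{\mathrm{i}k|\mathbf{x}|}-1}{4\pi|\mathbf{x}|}$ is \emph{not} real-analytic (nor smooth) across $\mathbf{x}=0$. Expanding gives $R^k(\mathbf{x})=-\frac{\mathrm{i}k}{4\pi}+\frac{k^2|\mathbf{x}|}{8\pi}+O(|\mathbf{x}|^2)$, and the odd powers $|\mathbf{x}|,|\mathbf{x}|^3,\dots$ are only Lipschitz, $C^2$, etc.\ at the origin; in particular $\nabla R^k(\mathbf{x})=\frac{k^2}{8\pi}\frac{\mathbf{x}}{|\mathbf{x}|}+O(|\mathbf{x}|)$ is bounded but has a direction-dependent discontinuity at $0$. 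So the remainder operator does not have a smooth kernel and does not produce a smooth function. The conclusion you need still holds, but for a different reason: since $R^k$ and $\nabla R^k$ are \emph{bounded} near the diagonal, dominated convergence shows that $\int_{\dr D}R^k(\mathbf{x}-\mathbf{y})\varphi(\mathbf{y})\,d\sigma(\mathbf{y})$ and its first derivatives extend continuously (with equal one-sided limits) across $\dr D$, which is all that is required to transfer the jump relation from $S_D^0$ to $S_D^k$ and to identify the remainder integral with $(K_D^k)^*-(K_D^0)^*$. With that correction the argument is complete and consistent with the sources the paper cites.
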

\begin{prop}\label{propjumpM}
	Let $ \phibf \in L^2_T(\dr D)$. Then $\mathcal{A}_D^k[\phibf]$ is continuous on $\mathbb{R}^3$ and its curl satisfies the following jump formula:
	\begin{equation*}%\label{jumpM}
	\left( \nubf \times \g \times \mathcal{A}_D^k[\phibf]\right)^\pm = \mp \frac{\phibf}{2} + \mathcal{M}_D^k[\phibf] \quad \mbox{ on } \dr D,
	\end{equation*}
	where \begin{equation*}
	\forall \mathbf{x}\in \dr D, \quad \left( \nubf(\mathbf{x}) \times \g \times \mathcal{A}_D^k[\phibf]\right)^\pm (\mathbf{x})= \lim_{t\rightarrow 0^+} \nubf(\mathbf{x}) \times \g \times \mathcal{A}_D^k[\phibf] (\mathbf{x}\pm t \nubf(\mathbf{x})).
	\end{equation*}
\end{prop}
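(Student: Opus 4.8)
The plan is to establish the two assertions separately: the continuity of $\mathcal{A}_D^k[\phibf]$ reduces at once to the scalar theory, while the jump of its curl is extracted from the singularity of $\g_{\mathbf{x}}\Gamma^k$. For the continuity, note that $\mathcal{A}_D^k$ acts on $\phibf=(\phi_1,\phi_2,\phi_3)\in L^2_T(\dr D)\subset (L^2(\dr D))^3$ componentwise as the scalar single layer potential, i.e. $(\mathcal{A}_D^k[\phibf])_j=S_D^k[\phi_j]$. Hence the continuity of $\mathcal{A}_D^k[\phibf]$ on $\mathbb{R}^3$ follows directly from Proposition \ref{propjumpS}, which asserts the continuity of $S_D^k[\varphi]$ for every $\varphi\in L^2(\dr D)$.

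For the jump, I would first differentiate under the integral sign for $\mathbf{x}\notin\dr D$, where the kernel is smooth, to obtain
$$\g\times\mathcal{A}_D^k[\phibf](\mathbf{x})=\int_{\dr D}\g_{\mathbf{x}}\Gamma^k(\mathbf{x}-\mathbf{y})\times\phibf(\mathbf{y})\,d\sigma(\mathbf{y}),$$
and then apply $\nubf(\mathbf{x})\times$ and the triple product identity $\mathbf{a}\times(\mathbf{b}\times\mathbf{c})=\mathbf{b}(\mathbf{a}\cdot\mathbf{c})-\mathbf{c}(\mathbf{a}\cdot\mathbf{b})$ to split the integrand as
$$\nubf\times\big(\g_{\mathbf{x}}\Gamma^k\times\phibf\big)=\g_{\mathbf{x}}\Gamma^k\,(\nubf\cdot\phibf)-\phibf\,(\nubf\cdot\g_{\mathbf{x}}\Gamma^k).$$
The second term is, componentwise, minus the operator $\varphi\mapsto\int_{\dr D}\partial\Gamma^k(\mathbf{x}-\mathbf{y})/\partial\nu_{\mathbf{x}}\,\varphi(\mathbf{y})\,d\sigma(\mathbf{y})=\partial_\nu S_D^k[\varphi]$; by Proposition \ref{propjumpS} its two boundary traces are $\pm\tfrac12\phibf+(K_D^k)^*[\phibf]$, so with the sign in front this term contributes exactly the jump $\mp\tfrac12\phibf$ together with a continuous (principal value) part.

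The crux is to show that the first term $\g_{\mathbf{x}}\Gamma^k\,(\nubf(\mathbf{x}_0)\cdot\phibf(\mathbf{y}))$ produces no jump, i.e. it has the same limit as $\mathbf{x}\to\mathbf{x}_0$ from both sides. Here I would use the tangentiality $\nubf(\mathbf{y})\cdot\phibf(\mathbf{y})=0$ together with the $C^2$ smoothness of $\dr D$, which gives $|\nubf(\mathbf{x}_0)-\nubf(\mathbf{y})|=O(|\mathbf{x}_0-\mathbf{y}|)$ and hence $\nubf(\mathbf{x}_0)\cdot\phibf(\mathbf{y})=O(|\mathbf{x}_0-\mathbf{y}|)\,|\phibf(\mathbf{y})|$ near the singularity; this extra factor lowers the order of $\g_{\mathbf{x}}\Gamma^k\sim|\mathbf{x}-\mathbf{y}|^{-2}$ to a weakly singular kernel $\sim|\mathbf{x}-\mathbf{y}|^{-1}$, whose potential is continuous across $\dr D$. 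Since such pointwise manipulations are rigorous only for regular densities, I would carry out the estimate first for smooth tangential $\phibf$ and then pass to general $\phibf\in L^2_T(\dr D)$ by density, invoking the $L^2_T$-boundedness of $\mathcal{M}_D^k$ and of $(K_D^k)^*$. I would also use the splitting $\Gamma^k=\Gamma^0+(\Gamma^k-\Gamma^0)$, noting that $\Gamma^k-\Gamma^0$ is $C^1$ so its contribution is continuous and the jump is governed entirely by the static kernel $\Gamma^0$. Collecting the continuous parts of both terms identifies them with the direct value $\mathcal{M}_D^k[\phibf]$ on $\dr D$ (equivalently, the average of the two traces), yielding $(\nubf\times\g\times\mathcal{A}_D^k[\phibf])^\pm=\mp\tfrac12\phibf+\mathcal{M}_D^k[\phibf]$.

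The main obstacle I anticipate is precisely this first-term analysis: rigorously confirming that the only jump arises from the normal-derivative term while the tangential contribution enters only through the principal value. This relies on the density/approximation argument and on the boundedness of $\mathcal{M}_D^k$ as a singular integral operator on $L^2_T(\dr D)$ (a Calder\'on--Zygmund type estimate), rather than on any new idea; the remainder is bookkeeping with the triple product identity and Proposition \ref{propjumpS}.
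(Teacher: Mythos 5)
The paper does not actually prove this proposition---it is recalled with a pointer to \cite{ADM16,kangbook}---and your argument reconstructs precisely the standard proof found in such references: differentiate under the integral off $\dr D$, split $\nubf(\mathbf{x}_0)\times\bigl(\g_{\mathbf{x}}\Gamma^k\times\phibf\bigr)$ by the triple-product identity, read the jump $\mp\tfrac12\phibf$ off the normal-derivative term via Proposition \ref{propjumpS}, and show the remaining term $\g_{\mathbf{x}}\Gamma^k\,(\nubf(\mathbf{x}_0)\cdot\phibf(\mathbf{y}))$ is jump-free because tangentiality gives $\nubf(\mathbf{x}_0)\cdot\phibf(\mathbf{y})=(\nubf(\mathbf{x}_0)-\nubf(\mathbf{y}))\cdot\phibf(\mathbf{y})=O(|\mathbf{x}_0-\mathbf{y}|)|\phibf(\mathbf{y})|$, weakening the kernel to an integrable $O(|\mathbf{x}-\mathbf{y}|^{-1})$ singularity. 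Your sketch is correct; the only detail left implicit is the uniform-in-$t$ $L^2$ bound on the off-boundary traces (a nontangential maximal function estimate) needed to make the density passage from smooth to general $\phibf\in L^2_T(\dr D)$ rigorous, which is exactly the Calder\'on--Zygmund input you already name and which is classical for the smooth (here spherical) boundaries considered in the paper.
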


\section{Low-frequency asymptotic expansion} \label{sec22}
In this section, we shall derive the asymptotic expansion of the operator introduced in Section \ref{sec2} in terms of low frequency. Furthermore, we will use the layer potentials defined in Section \ref{sec2} to obtain representations and asymptotic expansions of the solutions.
%In this section, we shall derive the asymptotic expansion of the operators introduced in Section \ref{sec2} in terms of low frequency, further, use the layer potentials defined in Section \ref{sec2} to obtain the representation and asymptotic expansion of the solution.
%In order to estimate the electric $\bE$, we shall use the layer-potential techniques and asymptotic expansion of $\bE$ to estimate the leading-order and first-order term.
\subsection{Asymptotics for the operators}
For $k>0$, as $k \rightarrow 0$, the fundamental outgoing solution $\Gamma^k$
has the following asymptotic expansion
\begin{equation*}%\label{as_Gk}
\begin{aligned}
\ds\Gamma^k(\mathbf{x}) &= -\frac{e^{\mathrm{i}k|\mathbf{x}|}}{4 \pi |\mathbf{x}|}=-\frac{1}{4 \pi |\mathbf{x}|}-\frac{\mathrm{i}}{4 \pi }k-\frac{|\mathbf{x}|}{8 \pi }k^2+O(k^3)\\
\nm
&\ds:=\Gamma_0(\mathbf{x})+\Gamma_1(\mathbf{x})k+\Gamma_2(\mathbf{x})k^2+O(k^3).
\end{aligned}
\end{equation*}
Based on this, we have the following expansions for later usage.
Let $ \phibf \in L^2_T(\dr D)$, as $k\to 0$, we have
	\begin{equation*}
	\begin{aligned}
\ds\mathcal{A}_D^{k}[\phibf](\mathbf{x})
	&=\ds \int_{\dr D} \Gamma^k(\mathbf{x}-\mathbf{y})
	\phibf(\mathbf{y}) d \sigma(\mathbf{y}) \\
	\nm
	%&=\ds \int_{\dr D} \left ( \Gamma_0(\mathbf{x}-\mathbf{y})+\Gamma_1(\mathbf{x}-\mathbf{y})k+\Gamma_2(\mathbf{x}-\mathbf{y})k^2+O(k^3)\right )
	%\phibf(\mathbf{y}) d \sigma(\mathbf{y}) \\
	%\nm
	&=\ds \int_{\dr D} \Gamma_0(\mathbf{x}-\mathbf{y})
	\phibf(\mathbf{y}) d \sigma(\mathbf{y})+\int_{\dr D} \Gamma_1(\mathbf{x}-\mathbf{y})
	\phibf(\mathbf{y}) d \sigma(\mathbf{y})k\\
	\nm
	&\ds\quad +\int_{\dr D} \Gamma_2(\mathbf{x}-\mathbf{y})
	\phibf(\mathbf{y}) d \sigma(\mathbf{y})k^2+O(k^3) \\
	\nm
	%&=-\ds\int_{\dr D}\frac{1}{4 \pi |\mathbf{x}-\mathbf{y}|}  \phibf(\mathbf{y}) d \sigma(\mathbf{y})-\int_{\dr D}  \frac{i}{4 \pi }k  \phibf(\mathbf{y}) d \sigma(\mathbf{y})
	%-\frac{|\mathbf{x}-\mathbf{y}|}{8 \pi }k^2  \phibf(\mathbf{y}) d \sigma(\mathbf{y})+O(k^3).\\
	%\nm
	&:=\ds \acute{\mathcal{A}}  _D^{0} [\phibf](\mathbf{x})+\acute{\mathcal{A}}  _D^{1} [\phibf](\mathbf{x})k+\acute{\mathcal{A}} ^{2}[\phibf](\mathbf{x})k^2+O(k^3),
	\end{aligned}
	\end{equation*}
we also have
\begin{equation}\label{kerA}
\begin{aligned}
\nabla \times\ds\mathcal{A}_D^{k}[\phibf](\mathbf{x}) =&\ds\int_{\dr D}\nabla_{\mathbf{x}} \times \Gamma^0(\mathbf{x}-\mathbf{y})  \phibf(\mathbf{y}) d \sigma(\mathbf{y})\\
\nm
&
+\int_{\dr D}\nabla_{\mathbf{x}} \times\Gamma^2(\mathbf{x}-\mathbf{y})k^2  \phibf(\mathbf{y}) d \sigma(\mathbf{y})+O(k^3)\\
\nm
=&\ds\nabla \times\ds\acute{\mathcal{A}} ^{0}_D[\phibf](\mathbf{x})+\nabla \times\ds\acute{\mathcal{A}}_D ^{2}[\phibf](\mathbf{x})k^2+O(k^3),
\end{aligned}
\end{equation}
and
\begin{equation}\label{kerkerA}
\begin{aligned}
&\ds \nabla \times\nabla \times\ds\mathcal{A}_D^{k}[\phibf](\mathbf{x})\\
\nm
&=\ds k^2\ds\mathcal{A}_D^{k}[\phibf](\mathbf{x})+\nabla{S}_D^{ k}[\g_{\dr D}\cdot \phibf](\mathbf{x})\\
\nm
%&=\ds k^2\ds\mathcal{A}_D^{0}[\phibf](\mathbf{x})+\ds \int_{\dr D}\nabla_{\mathbf{x}} \left (\left ( \Gamma^0(\mathbf{x}-\mathbf{y})+\Gamma^1(\mathbf{x}-\mathbf{y})k+\Gamma^2(\mathbf{x}-\mathbf{y})k^2+O(k^3)\right )  \g_{\dr D}\cdot\phibf(\mathbf{y})  \right )   d \sigma(\mathbf{y})\\
%\nm
&= k^2\ds\acute{\mathcal{A}}_D ^{0}[\phibf](\mathbf{x})+\ds\int_{\dr D}\nabla_{\mathbf{x}} \left (   \Gamma^0(\mathbf{x}-\mathbf{y})\g_{\dr D}\cdot \phibf(\mathbf{y}) \right ) d \sigma(\mathbf{y})\\
\nm
&~~~+\ds\int_{\dr D}\nabla_{\mathbf{x}} \left ( \Gamma^2(\mathbf{x}-\mathbf{y}) \g_{\dr D}\cdot \phibf(\mathbf{y}) \right ) d \sigma(\mathbf{y})k^2+O(k^3)\\
\nm
&:=\ds k^2\ds\acute{\mathcal{A}}_D ^{0}[\phibf](\mathbf{x})+\nabla\acute{{S}} _D^{ 0}[\g_{\dr D}\cdot \phibf](\mathbf{x})+\nabla\acute{{S}} _D^{ 2}[\g_{\dr D}\cdot \phibf](\mathbf{x})k^2+O(k^3).
\end{aligned}
\end{equation}
Similarly, for the boundary operators $\mathcal{M}_D^k$ and  $\mathcal{L}_D^k$, we have the expansions that
\begin{equation}\label{a_defM}
\begin{aligned}
\mathcal{M}^k_D[\phibf]
%&= \nubf(\mathbf{x})  \times \g \times \int_{\dr D} \left ( \Gamma^0(\mathbf{x}-\mathbf{y})+\Gamma^1(\mathbf{x}-\mathbf{y})k+\Gamma^2(\mathbf{x}-\mathbf{y})k^2+O(k^3)\right )  d\sigma(\mathbf{y}),\\
&= \nubf(\mathbf{x})  \times\int_{\dr D}\nabla_{\mathbf{x}} \times \Gamma^0(\mathbf{x}-\mathbf{y}) \phibf(\mathbf{y}) d \sigma(\mathbf{y})\\
&\quad
+ \nubf(\mathbf{x})  \times\int_{\dr D}\nabla_{\mathbf{x}} \times\Gamma^2(\mathbf{x}-\mathbf{y})k^2  \phibf(\mathbf{y}) d \sigma(\mathbf{y})+O(k^3)\\
\nm
&\ds:=\acute{\mathcal{M}} ^0_D+\acute{\mathcal{M}} ^2_Dk^2+O(k^3).
\end{aligned}
\end{equation}
\begin{equation}\label{a_L}
\begin{aligned}
\mathcal{L}^k_D[\phibf]&=\nubf \times\acute{\mathcal{A}} ^0_D[\phibf]k^2+\nu\times \nabla \acute{{S}} _D^{ 0}[\g_{\dr D}\cdot \phibf] +\nu\times \nabla \acute{{S}} _D^{ 2}[\g_{\dr D}\cdot \phibf]k^2+O(k^3).
\end{aligned}
\end{equation}

\subsection{Asymptotics for the fields $(\bE^{i}, \bH^{i})$}
Form \eqref{in}, it follows that
\begin{equation*}%\label{as_in1}
\begin{aligned}
\bH^{i}&=\mathbf{d} \times \mathbf{p}+\mathrm{i}(\mathbf{d} \times \mathbf{p}) (\mathbf{x} \cdot \mathbf{d}) k-(\mathbf{d} \times \mathbf{p}) (\mathbf{x} \cdot \mathbf{d}) ^2k^2-\mathrm{i}(\mathbf{d} \times \mathbf{p}) (\mathbf{x} \cdot \mathbf{d}) ^3k^3+O(k^4)\\
&:=\bH^{i}_0+\bH^{i}_1k+\bH^{i}_2k^2+\bH^{i}_3k^3+O(k^4).
\end{aligned}
\end{equation*}
Similarly, for the incident electric field, there holds the following asymptotic expansion:
\begin{equation}\label{as_in2}
\begin{aligned}
\bE^{i}&=\mathbf{p}+\mathrm{i}\mathbf{p} (\mathbf{x} \cdot \mathbf{d}) k-\mathbf{p} (\mathbf{x} \cdot \mathbf{d}) ^2k^2-\mathrm{i}\mathbf{p}(\mathbf{x} \cdot \mathbf{d}) ^3k^3+O(k^4)\\
&:=\bE^{i}_0+\bE^{i}_1k+\bE^{i}_2k^2+\bE^{i}_3k^3+O(k^4).
\end{aligned}
\end{equation}

\subsection{The representation of the solution}
Using the layer potentials defined in Section \ref{sec2}, the solution to  (\ref{eq:maxwell}) can be represented as
\begin{equation}
\label{represent}
\ds\bE (\mathbf{x})= \left \{
\begin{array}{ll}
\ds \bE^{i}(\mathbf{x}) +  \nabla \times \mathcal{A}_{D}^{k}
[\phibf](\mathbf{x}) +
\nabla\times\nabla\times\mathcal{A}_{D}^{k} [\psibf](\mathbf{x}) ,\quad &\mathbf{x} \in \mathbb{R}^3 \setminus \overline{{D}},\\
\nm \ds\nabla \times \mathcal{A}_{D}^{k_c} [\phibf](\mathbf{x}) +
\nabla\times\nabla\times\mathcal{A}_{D}^{k_c} [\psibf](\mathbf{x}) ,\quad &\mathbf{x}
\in {D},
\end{array}
\right .
\end{equation}
and
\begin{equation*}
\bH(\mathbf{x}) = -\frac{\mathrm{i}}{\omega }\bigr(\nabla \times \bE\bigr)(\mathbf{x}),\quad \mathbf{x} \in \mathbb{R}^3\setminus \dr {D}.
\end{equation*}
Here $ k=\omega$, $ k_c=\omega \sqrt{\varepsilon_c}$, and the pair $(\phibf, \psibf) \in TH({\rm div}, \dr D)
\times TH({\rm div}, \dr D)$ is the unique solution to
\begin{equation*}
%\label{phi_psi}
\begin{bmatrix}
\ds\mathcal{I} +  \mathcal{M}_D^{k_c} -
\mathcal{M}_D^{k} &
\ds\mathcal{L}_D^{k_c} - \mathcal{L}_D^{k} \\
\ds\mathcal{L}_D^{k_c} - \mathcal{L}_D^{k} & \ds \left(
\frac{k_c^2+k^2}{2 } \right)\mathcal{I} +
{k_c^2}\mathcal{M}_D^{k_c} -
{k^2}\mathcal{M}_D^{k}
\end{bmatrix}
\begin{bmatrix}
\phibf \\ \psibf
\end{bmatrix}
= \left.\begin{bmatrix}
{\nu}\times \bE^{i}\\
\mathrm{i} \omega {\nu} \times \bH^{i}
\end{bmatrix}\right|_{\dr D},
\end{equation*}
which is equivalent to
\begin{equation}
\label{phi_psi2}
\begin{bmatrix}
\ds \mathcal{I} +  \mathcal{M}_D^{k_c} -
\mathcal{M}_D^{k} &
\ds\mathcal{L}_D^{k_c} - \mathcal{L}_D^{k} \\
\ds\frac{\mathcal{L}_D^{k_c} - \mathcal{L}_D^{k}}{\omega }  & \ds\frac{ \left(
	\frac{k_c^2+k^2}{2 } \right)\mathcal{I} +
	{k_c^2}\mathcal{M}_D^{k_c} -
	{k^2}\mathcal{M}_D^{k}}{\omega }
\end{bmatrix}
\begin{bmatrix}
\phibf \\ \psibf
\end{bmatrix}
= \left.\begin{bmatrix}
{\nu}\times \bE^{i}\\
\mathrm{i}  {\nu} \times \bH^{i}
\end{bmatrix}\right|_{\dr D}, 
\end{equation}
where $\mathcal{I}$ represents the identity operator on $(\phibf, \psibf) \in TH({\rm div}, \dr D)
\times TH({\rm div}, \dr D)$.
\subsubsection{Asymptotic expansions and representation for the potential $\left[\phibf , \psibf\right]^{'}$}
Let
\begin{equation*}
%\label{lk_phi_psi}
\begin{aligned}
\mathcal{N}^k=
\begin{bmatrix}
\ds \mathcal{I} +  \mathcal{M}_D^{k_c} -
\mathcal{M}_D^{k} &
\ds\mathcal{L}_D^{k_c} - \mathcal{L}_D^{k} \\
\ds\frac{\mathcal{L}_D^{k_c} - \mathcal{L}_D^{k}}{\omega } & \ds \frac{ \left(
	\frac{k_c^2+k^2}{2 } \right)\mathcal{I} +
	{k_c^2}\mathcal{M}_D^{k_c} -
	{k^2}\mathcal{M}_D^{k}}{\omega }
\end{bmatrix},\\
\end{aligned}
\end{equation*}
\eqref{phi_psi2} can be written as
\begin{equation}\label{lk_phi}
\mathcal{N}^k
\begin{bmatrix}
\phibf \\\psibf
\end{bmatrix}
= \left.\begin{bmatrix}
{\nu}\times \bE^{i}\\
\mathrm{i}  {\nu} \times \bH^{i}
\end{bmatrix}\right|_{\dr D}.
\end{equation}

By using the asymptotic expansion of operators \eqref{a_defM} and \eqref{a_L}, we have the following asymptotic expansion,
\begin{equation}\label{lk}
\begin{aligned}
&\mathcal{N}^k=\begin{bmatrix}
\ds \mathcal{I}   &
\ds 0 \\
\nm
\ds \nubf(\mathbf{x}) \times \acute{\mathcal{A}}_D^0+ \nubf(\mathbf{x}) \times\g\acute{{S}} _D^2[\g_{\dr D} \cdot ] & \frac{\tilde{C} }{2}\mathcal{I}   +\tilde{C} \acute{\mathcal{M}} _D^{0}
\end{bmatrix}
+\begin{bmatrix}
\ds 0   &
\ds 0 \\
\nm
\ds \tilde{C} ^{\frac{3}{2}} \nubf(\mathbf{x}) \times \acute{\mathcal{A}}_D^1&0
\end{bmatrix}k^{\frac{1}{2}}
\\
\nm
&+\begin{bmatrix}
\ds \tilde{C}  \acute{\mathcal{M}} _D^{2}   &
\ds \tilde{C} \nu \times \left ( \acute{\mathcal{A}}_D^0 +\g\acute{{S}} _D^2[\g_{\dr D} \cdot ] \right )  \\
\nm
 \nu \times\left ( \tilde{C} ^2 \acute{\mathcal{A}}_D ^2+\tilde{C} ^2\g\acute{{S}} _D^4[\g_{\dr D} \cdot ] - \acute{\mathcal{A}}_D^0- \g\acute{{S}} _D^2[\g_{\dr D} \cdot ] \right )    & \frac{\mathcal{I}}{2}   - \acute{\mathcal{M}} _D^{0}
\end{bmatrix}k+O(k^{\frac{3}{2}})\\
\nm
&\ds :=\mathcal{N}_0+\mathcal{N}_{\frac{1}{2} }k^{\frac{1}{2} }+\mathcal{N}_1k+O(k^{\frac{3}{2}}).
\end{aligned}
\end{equation}
Denote
\begin{equation*}%\label{N0}
\begin{aligned}
\mathcal{N}_0&=\begin{bmatrix}
\ds \mathcal{I}   &
\ds 0 \\
\nm
\ds \nubf(\mathbf{x}) \times \acute{\mathcal{A}}_D^0+ \nubf(\mathbf{x}) \times\g\acute{{S}} _D^2[\g_{\dr D} \cdot ] & \tilde{C} \Big(\frac{1}{2}\mathcal{I}   +\acute{\mathcal{M}} _D^{0}\Big)
\end{bmatrix}\\
&:=\begin{bmatrix}
\ds \mathcal{I}  &
\ds 0 \\
\ds \mathcal{Q} & \ds \mathcal{W}
\end{bmatrix}.
\end{aligned}
\end{equation*}
We note that $-1/2$ is not an eigenvalue of $\mathcal{M}_D^{k}$ (cf. \cite{RG}), and thus the operator $\mathcal{W}$ is invertible on $TH(\mathrm{div}, \partial D)$. 

%We note that $-1/2$ is not an eigenvalue of $\mathcal{M}_D^{k}$, see \cite{RG}, thus the operator $\mathcal{W}$ is invertible on $TH({\mathrm{div}}, \partial D)$.

We assume the following expansion that
for the pair $(\phibf, \psibf) \in TH({\rm div}, \dr D)
\times TH({\rm div}, \dr D)$,
\begin{equation}\label{phi}
\begin{aligned}
\begin{bmatrix}
\phibf \\\psibf
\end{bmatrix}:=\begin{bmatrix}
\phibf^0\\
\psibf^0
\end{bmatrix}+
\begin{bmatrix}
\phibf^{\frac{1}{2}}\\
\psibf^{\frac{1}{2}}
\end{bmatrix}k^{\frac{1}{2}}
+\begin{bmatrix}
\phibf^1 \\
\psibf^1
\end{bmatrix}k+O(k^{\frac{3}{2}}).
\end{aligned}
\end{equation}
Meanwhile, the incident field can be expanded as
\begin{equation*}%\label{inci}
\begin{bmatrix}
 \bE^{i}\\ \bH^{i}
\end{bmatrix}:=\begin{bmatrix}
 \bE^{i}_0\\
\bH^{i}_0
\end{bmatrix}+k \begin{bmatrix}
 \bE^{i}_1\\
 \bH^{i}_1
\end{bmatrix}
+O(k^2).
\end{equation*}
Thus, the right-hand part in \eqref{phi_psi2} can be written as
\begin{equation}\label{right}
\begin{bmatrix}
{\nu}\times \bE^{i}\\
\mathrm{i}{\nu} \times \bH^{i}
\end{bmatrix}:=\begin{bmatrix}
{\nu}\times \bE^{i}_0\\
\mathrm{i}{\nu} \times \bH^{i}_0
\end{bmatrix}+k \begin{bmatrix}
{\nu}\times \bE^{i}_1\\
\mathrm{i}{\nu} \times \bH^{i}_1
\end{bmatrix}
+O(k^2).
\end{equation}
%Moreover, the right-hand of \eqref{phi_psi} can be writen as
%\begin{equation}\label{}
%\begin{bmatrix}
%{\nu}\times \bE^{i}\\
%i \omega  {\nu} \times \bH^{i}
%\end{bmatrix}=\begin{bmatrix}
%{\nu}\times \bE^{i}_0\\
%0
%\end{bmatrix}+k \begin{bmatrix}
%{\nu}\times \bE^{i}_1\\
%i  {\nu} \times \bH^{i}_0
%\end{bmatrix}+k^2 \begin{bmatrix}
%{\nu}\times \bE^{i}_2\\
%i  {\nu} \times \bH^{i}_1
%\end{bmatrix}+k^3 \begin{bmatrix}
%{\nu}\times \bE^{i}_3\\
%i  {\nu} \times \bH^{i}_2
%\end{bmatrix}
%+O(k^4).
%\end{equation}

%Note that
%\begin{equation}\label{int_pi}
%\int_{\partial D}\phi \mathrm{d}\sigma_y=O(\omega ),
%\end{equation}
%indeed, by \eqref{lk_phi} and \eqref{lk}, there satisfies
%\begin{equation*}%\label{key}
%\phi +O(\omega )=\nu \times \bE^{i}=\nu \times \bE^{i}_0+O(\omega),
%\end{equation*}
%it follows that
%\begin{equation*}%\label{key}
%\int_{\partial D}\phi \mathrm{d}\sigma_y+O(\omega )=\int_{\partial D} \nu \times \bE^{i}_0+O(\omega )=\int_{ D} \nabla \times \bE^{i}_0+O(\omega )=O(\omega ),
%\end{equation*}
%therefore, we get \eqref{int_phi},
%futhermore, there is
%\begin{equation*}
% \tilde{C} ^{\frac{3}{2}} \nubf(\mathbf{x}) \times \acute{\mathcal{A}}_D^1\left [ \phi \right ] =O(\omega),
%\end{equation*}
%thus, the $k^{\frac{1}{2}}$ item can be classified as a higher order term, we get
%\begin{equation}\label{Nas}
%\mathcal{N}=\mathcal{N}_0+\mathcal{N}_1k+O(k^2).
%\end{equation}
Noting that the operator $\mathcal{N}^k$ is invertible for $k\ll 1$, and from \eqref{lk_phi}, we obtain

\begin{equation}\label{phi_ilk}
\begin{bmatrix}
\phibf \\ \psibf
\end{bmatrix}
=\left(\mathcal{N}^k\right)^{-1} \left.\begin{bmatrix}
{\nu}\times \bE^{i}\\
\mathrm{i}  {\nu} \times \bH^{i}
\end{bmatrix}\right|_{\dr D}.
\end{equation}
Considering the operator $\mathcal{N}^k$ and by \eqref{lk}, we have

\begin{equation*}
\begin{aligned}
\ds\mathcal{N}^k=\mathcal{N}_0+\mathcal{N}_{\frac{1}{2} }k^{\frac{1}{2} }+\mathcal{N}_1k+O(k^{\frac{3}{2}})=\mathcal{N}^0\left(\mathcal{I}+\mathcal{N}_0^{-1}\mathcal{N}_{\frac{1}{2} }k^{\frac{1}{2} }+\mathcal{N}_0^{-1}\mathcal{N}_1k+O\left(k^{\frac{3}{2}}\right)\right).
\end{aligned}
\end{equation*}
 Hence, we can further deduce that
\begin{equation*}
\begin{aligned}
\ds \left(\mathcal{N}^k\right)^{-1}&=\left(I+\mathcal{N}_0^{-1}\mathcal{N}_{\frac{1}{2} }k^{\frac{1}{2} }+\mathcal{N}_0^{-1}\mathcal{N}_1k+O(k^{\frac{3}{2}})\right)^{-1}\mathcal{N}_0^{-1}\\
&=\sum_{n=0}^{\infty } \left ( -\mathcal{N}_0^{-1}\mathcal{N}_{\frac{1}{2} }k^{\frac{1}{2} }-\mathcal{N}_0^{-1}\mathcal{N}_1k-O(k^{\frac{3}{2}})\right ) ^n\mathcal{N}_0^{-1}\\
&=\mathcal{N}_0^{-1}-\mathcal{N}_0^{-1}\mathcal{N}_{\frac{1}{2} }\mathcal{N}_0^{-1}k^{\frac{1}{2} }-\mathcal{N}_0^{-1}\mathcal{N}_1\mathcal{N}_0^{-1}k+O\left(k^{\frac{3}{2}}\right),
\end{aligned}
\end{equation*}
where $\mathcal{N}_0^{-1}$ is given by
\begin{equation*}%\label{l0}
\begin{aligned}
\left(\mathcal{N}_0\right)^{-1}&=\begin{bmatrix}
\ds \mathcal{I}   &
\ds 0 \\
\nm
\ds \nubf(\mathbf{x}) \times \acute{\mathcal{A}}_D^0+ \nubf(\mathbf{x}) \times\g\acute{{S}} _D^2[\g_{\dr D} \cdot ] & \frac{\tilde{C} }{2}\mathcal{I}   +\tilde{C} \acute{\mathcal{M}} _D^{0}
\end{bmatrix}^{-1}\\
&:=\begin{bmatrix}
\ds \mathcal{I}  &
\ds 0 \\
\ds -\mathcal{W}^{-1}\mathcal{Q} & \ds \mathcal{W}^{-1}
\end{bmatrix}.
\end{aligned}
\end{equation*}
Therefore, by \eqref{phi_ilk}, we have
\begin{equation}\label{phi_ilk2}
\begin{aligned}
&\begin{bmatrix}
\phibf \\\psibf
\end{bmatrix}
=\left(\mathcal{N}_0^{-1}-\mathcal{N}_0^{-1}\mathcal{N}_{\frac{1}{2} }\mathcal{N}_0^{-1}k^{\frac{1}{2} }-\mathcal{N}_0^{-1}\mathcal{N}_1\mathcal{N}_0^{-1}k+O\left(k^{\frac{3}{2}}\right)\right) \left.\begin{bmatrix}
{\nu}\times \bE^{i}\\
\mathrm{i}  {\nu} \times \bH^{i}
\end{bmatrix}\right|_{\dr D}.
\end{aligned}
\end{equation}
Note that there is
\begin{equation*}%\label{int_phi}
\nubf(\mathbf{x}) \times \acute{\mathcal{A}}_D^1\left [ \nu\times \bE_0^{i} \right ] =-\frac{1}{4\pi} \nubf(\mathbf{x}) \times\int_{\partial D }\nu\times \bE_0^{i} d\sigma_y=-\frac{1}{4\pi}\nubf(\mathbf{x}) \times\int_{ D }\nabla \times \bE_0^{i} d\sigma_y=0.
\end{equation*}
From \eqref{phi_ilk2} and \eqref{right}, we obtain the representations that
\begin{equation}\label{phi1}
\phibf^0={\nu}\times \bE^{i}_0,
\end{equation}
\begin{equation}\label{phi2}
\nu(\mathbf{x}) \times \acute{\mathcal{A}}_D^0\left [ \phi^0 \right ] + \frac{\tilde{C} }{2}\mathcal \psi^0  +\tilde{C} \acute{\mathcal{M}} _D^{0}\left [ \psi^0 \right ]=\mathrm{i}\nu\times \bH_0^{i},
\end{equation}
\begin{equation*}%\label{key}
\phibf^{\frac{1}{2}}=0,
\end{equation*}
\begin{equation*}
\psibf^{\frac{1}{2}}=-\mathcal{W}^{-1}\tilde{C} ^{\frac{3}{2}} (\nubf(\mathbf{x}) \times \acute{\mathcal{A}}_D^1)\mathcal{W}^{-1}\mathcal{Q}[\nu\times \bE_0^{i}]=0,
\end{equation*}
\begin{equation}\label{phi3}
\phibf^1={\nu}\times \bE^{i}_1- \tilde{C} \acute{\mathcal{M}} _D^{2}\left [ \phibf^0 \right ]-\tilde{C} \nu \times \acute{\mathcal{A}}_D ^0\left[\psi^0 \right]-\tilde{C} \nu \times \g\acute{{S}} _D^2[\g_{\dr D} \cdot \psi^0].
\end{equation}

\subsection{Asymptotics for the solution $\bE(\mathbf{x})$}
By substituting the expansions of the potential $\phi, \psi$ from equation \eqref{phi}, as well the expansions of the operators $\nabla \times \mathcal{A}_D^k$ from equation \eqref{kerA} and $\nabla \times\nabla \times \mathcal{A}_D^k$ from equation \eqref{kerkerA} into the representation of the solution $\bE(\mathbf{x})$ as illustrated in equation \eqref{represent}, the asymptotics expansion of $\bE(\mathbf{x})$ is as follows. For $ \mathbf{x} \in \mathbb{R}^3 \setminus \overline{{D}}$, it follows that
\begin{equation*}%\label{represent_out}
\begin{aligned}
\bE (\mathbf{x})= &
\ds \bE^{i}(\mathbf{x})+\nabla \times \mathcal{A}_{D}^{k}
[\phibf](\mathbf{x})+
\nabla\times\nabla\times\mathcal{A}_{D}^{k} [\psibf](\mathbf{x})\\
\nm
=&\ds \bE^{i}_0(\mathbf{x}) +k\bE^{i}_1(\mathbf{x})+\nabla \times \mathcal{A}_{D}^{k}
[\phibf^0+\phibf^1 k](\mathbf{x}) +
\ds\nabla\times\nabla\times\mathcal{A}_{D}^{k} [ \psibf^0+ k \psibf^1](\mathbf{x})+O(k^2)\\
\nm
=&\ds \bE^{i}_0(\mathbf{x}) +k\bE^{i}_1(\mathbf{x})+\nabla \times \acute{\mathcal{A}}_{D}^{0}
[\phibf^0+\phibf^1 k](\mathbf{x})\\
\nm
&\ds+ k^2 \nabla \times \acute{\mathcal{A}}_{D}^{2}
[\phibf^0+\phibf^1 k](\mathbf{x})+k^2\acute{\acute{\mathcal{A}}}_D^0[\psibf^0+ k \psibf^1](\mathbf{x})\\
\nm
&\ds+\nabla\acute{{S}} _D^{0}[\g_{\dr D}\cdot [\psibf^0+ k \psibf^1]]+k^2\nabla\acute{{S}} _D^{ 2}[\g_{\dr D}\cdot [\psibf^0+ k \psibf^1]]+O(k^2)\\
\nm
:=&\ds\bE_0(\mathbf{x})+k\bE_1(\mathbf{x})+O(k^2).
\end{aligned}
\end{equation*}
In a similar way, for $\mathbf{x} \in {{D}}$, we have
\begin{equation*}%\label{represent_in}
\begin{aligned}
\bE (\mathbf{x})=
& \nabla \times \mathcal{A}_{D}^{k_c}
[\phibf^0+\phibf^1 k](\mathbf{x})+
\ds\nabla\times\nabla\times\mathcal{A}_{D}^{k_c} [ \psibf^0+ k \psibf^1 ](\mathbf{x})+O\left(k^2\right)\\
\nm
=&\ds  \nabla \times \acute{\mathcal{A}} _{D}^{0}
[\phibf^0+\phibf^1 k](\mathbf{x})+ \varepsilon_ck^2 \nabla \times \acute{\mathcal{A}} _{D}^{2}
[\psibf^0+ k \psibf^1](\mathbf{x})+\varepsilon_ck^2\acute{\acute{\mathcal{A}}}_D^0[\psibf^0+ k \psibf^1](\mathbf{x})\\
\nm
&+\nabla\acute{{S}} _D^{0}[\g_{\dr D}\cdot [\psibf^0+ k \psibf^1]] +\varepsilon_ck^2\nabla\acute{{S}} _D^{ 2}[\g_{\dr D}\cdot [\psibf^0+ k \psibf^1]]+O(k^2)\\
\nm
:=&\ds\bE_0(\mathbf{x})+k\bE_1(\mathbf{x})+O(k^2),
\end{aligned}
\end{equation*}
where the parts $\bE_{0}, \bE_1,$ should be represented as
\begin{equation}
\label{represent_e0}
\ds\bE_0 (\mathbf{x})= \left \{
\begin{array}{ll}
\ds \bE^{i}_0(\mathbf{x}) +  \nabla \times \acute{\mathcal{A}} _{D}^{0}
[\phibf^0](\mathbf{x}) +\nabla\acute{{S}} _D^{0}[\g_{\dr D}\cdot [\psibf^0]] ,\qquad &\mathbf{x} \in \mathbb{R}^3 \setminus \overline{{D}},\\
\nm \ds \nabla \times \acute{\mathcal{A}} _{D}^{0}
[\phibf^0](\mathbf{x}) +\nabla\acute{{S}} _D^{0}[\g_{\dr D}\cdot [\psibf^0]] ,\qquad &\mathbf{x}
\in {D},
\end{array}
\right .
\end{equation}

\begin{equation}
\label{represent_e1}
\ds\bE_1 (\mathbf{x})= \left \{
\begin{aligned}
&\ds \bE^{i}_1(\mathbf{x}) +  \nabla \times \acute{\mathcal{A}} _{D}^{0}
[\phibf^1](\mathbf{x}) +\nabla\acute{{S}} _D^{0}[\g_{\dr D}\cdot [\psibf^1]] ,\quad\qquad \mathbf{x} \in \mathbb{R}^3 \setminus \overline{{D}},\\
\nm
&\ds \nabla \times \acute{\mathcal{A}} _{D}^{0}
[\phibf^1](\mathbf{x}) +\tilde{C} \nabla \times \acute{\mathcal{A}} _{D}^{2}
[\phibf^0](\mathbf{x})+\tilde{C}  \acute{\mathcal{A}} _{D}^{0}[\psibf^0](\mathbf{x})\\
&\ds+\nabla\acute{{S}} _D^{0}[\g_{\dr D}\cdot [\psibf^1]]+ \nabla\acute{{S}} _D^{2}[\g_{\dr D}\cdot [\psibf^0]] ,\qquad\qquad\quad \mathbf{x}
\in {D}.
\end{aligned}
\right .
\end{equation}

\section{Estimate of $\|\bE\|_{L^\infty(\Omega\backslash \overline{B_1\cup B_2})} $}
The objective of this section is to establish the proof of Theorem \ref{th:main01}. We will estimate $\bE_{0}$ and $\bE_1$ individually. Prior to presenting the proof, we will introduce a lemma concerning the boundary condition of the Maxwell equation \eqref{eq:maxwell}.
\begin{lem}\label{lemma1}
	There holds the following transmission condition
	\begin{equation}\label{boundary}
	\left(\nu \cdot\varepsilon \mathbf{E}\right)^{+}=\left(\nu \cdot\varepsilon_c \mathbf{E}\right)^{-} \quad \text { on } \quad \partial B_{j}, j=1,2.
	\end{equation}
	\begin{proof}
		The proof follows a similar spirit to that of Lemma 3.3 in \cite{DengliuG}.
		By taking the divergence of both sides of the second equation in \eqref{eq:maxwell} , we have
		\begin{equation}\label{eq1}
		\nabla\cdot\nabla  \times \bH= - \mathrm{i} \omega \varepsilon \nabla\cdot\bE=0.
		\end{equation}
		By conducting the inner product of both sides of the second equation in \eqref{eq:maxwell} with the gradient of a test function  $\psi \in C_{0}^{\infty}\left(\mathbb{R}^{3}\right)$ , and integrating both sides over  $\mathbb{R}^{3}$, there holds
		\begin{equation}\label{eq2}
		\int_{\mathbb{R}^{3}}(\nabla \times \mathbf{H}) \cdot \nabla \psi=-\mathrm{i} \omega\varepsilon \int_{\mathbb{R}^{3}} \mathbf{E} \cdot \nabla \psi.
		\end{equation}
	Utilizing both the vector calculus identity and Green's formula allows us to rewrite the left-hand side of equation \eqref{eq2} as 
		\begin{equation*}
		\int_{\mathbb{R}^{3}}(\nabla \times \mathbf{H}) \cdot \nabla \psi=\int_{\mathbb{R}^{3}} \nabla \cdot(\mathbf{H} \times \nabla \psi)=0.
		\end{equation*}
		Using \eqref{eq1} and Green's formula, the RHS of \eqref{eq2} implies
		\begin{equation}\label{eq3}
		\begin{aligned}
		& -\mathrm{i}\omega \varepsilon\int_{\mathbb{R}^{3}} \mathbf{E} \cdot \nabla \psi \\
		= & -\mathrm{i}\omega\varepsilon \int_{\mathbb{R}^{3} \backslash  B_1\cup  B_2}\mathbf{E} \cdot \nabla \psi-\mathrm{i}\omega \varepsilon \int_{  B_1\cup  B_2} \mathbf{E} \cdot \nabla \psi \\
		= & -\mathrm{i}\omega\varepsilon \int_{\mathbb{R}^{3} \backslash  B_1\cup  B_2} \nabla\cdot( \mathbf{E} \psi)-\mathrm{i}\omega \varepsilon \int_{  B_1\cup  B_2} \nabla\cdot( \mathbf{E} \psi) \\
		= &\mathrm{i}\omega \varepsilon\int_{ \dr B_1\cup \dr B_2} \left(\nu \cdot \mathbf{E}\right)^{+} \psi-\mathrm{i}\omega\varepsilon \int_{ \dr B_1\cup \dr B_2} \left(\nu \cdot \mathbf{E}\right)^{-} \psi.
		\end{aligned}
		\end{equation}
		Substituting \eqref{eq2} and \eqref{eq3} into \eqref{eq1}, we have
		\begin{equation*}
\int_{\dr B_1\cup \dr B_2} \left(\nu \cdot\varepsilon \mathbf{E}\right)^{+} \psi=\int_{\dr B_1\cup \dr B_2} \left(\nu \cdot\varepsilon \mathbf{E}\right)^{-} \psi .
		\end{equation*}
Finally, as $\psi$ is arbitrary, we obtain equation \eqref{boundary}, which completes the proof.
	\end{proof}
\end{lem}

\subsection{Analysis of $\bE_0$}
First, let's analyze the properties of the leading order term, $\bE_0$. Here, we obtain the following result in Lemma \ref{le0}.
\begin{lem}\label{le0}
Let $(\bE,\bH)$ be the soultion of \eqref{eq:maxwell}, and let $\bE_{0}$ represent the leading-order term of $\bE$.	For sufficiently small $\epsilon > 0$, there exists a positive constant $C^*$, which depends only on $r$ and $\mathbf{p}$, such that
	\begin{equation}\label{E_0_eatimate}
	\begin{aligned}
	\|\bE_0\|_{L^\infty(\mathbb{R}^3\backslash \overline{B_1\cup B_2})} \sim C^*r|p_1| \frac{1}{\epsilon |\ln \epsilon|}.
	\end{aligned}
	\end{equation}
\end{lem}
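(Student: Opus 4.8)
The plan is to recognize the leading-order field $\bE_0$ as the gradient of the electrostatic potential of two perfectly conducting balls in a uniform applied field, and then to extract the gap asymptotics by an explicit separation-of-variables computation.

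First I would start from the representation \eqref{represent_e0}. The incident part $\bE^i_0=\mathbf{p}$ is constant, hence bounded, so any blow-up must come from the two layer-potential terms. Using $\phibf^0=\nu\times\mathbf{p}$ from \eqref{phi1} and the identity $\g_{\dr D}\cdot(\nu\times\mathbf{p})=0$ (the surface divergence of the tangential trace of a constant field vanishes, exactly as in the computation preceding \eqref{phi1}), one verifies that $\nabla\times\bE_0=0$ in $\mathbb{R}^3\setminus\overline{D}$. Since this region is simply connected, $\bE_0=\nabla u_0$ for a scalar potential $u_0$; moreover the relation $\nabla\cdot(\varepsilon\bE)=0$ used in the proof of Lemma \ref{lemma1} shows $\Delta u_0=0$ in both $D$ and its complement.

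Next I would convert the transmission data into the perfect-conductor boundary condition. The Maxwell condition $[\nu\times\bE]=0$ keeps the tangential gradient of $u_0$ continuous, so $u_0$ is continuous up to an additive constant across each $\partial B_j$, while Lemma \ref{lemma1} gives $(\partial_\nu u_0)^+=\varepsilon_c(\partial_\nu u_0)^-=(\tilde{C}/\omega)(\partial_\nu u_0)^-$, forcing $(\partial_\nu u_0)^-=0$ at leading order. A harmonic function in a ball with vanishing Neumann data is constant, so $u_0$ is constant on each $B_j$ and $u_0=\mathbf{p}\cdot\mathbf{x}+u_0^{\mathrm{s}}$ with $u_0^{\mathrm{s}}$ harmonic and decaying in the exterior: this is precisely the electrostatic problem of two perfectly conducting balls of radius $r$ in the uniform field $\mathbf{p}$. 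Decomposing $\mathbf{p}=p_1\mathbf{e}_1+(p_2\mathbf{e}_2+p_3\mathbf{e}_3)$ along and transverse to the axis joining the centers, a symmetry argument shows the transverse part yields a gap field that stays bounded as $\epsilon\to0$, so the singular part is driven solely by the axial excitation $p_1\mathbf{e}_1$.

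For the axial problem I would pass to bispherical coordinates adapted to the two spheres (equivalently use the image-charge/reflection series), in which $u_0^{\mathrm{s}}$ separates into an explicit series of Legendre functions. Evaluating $\partial_{x_1}u_0$ along the segment connecting the spheres and performing the asymptotic analysis of the series as $\epsilon\to0$ should produce a leading term of order $\epsilon^{-1}|\ln\epsilon|^{-1}$ with a coefficient proportional to $r|p_1|$, in agreement with the known blow-up rate for the three-dimensional perfectly conducting problem \cite{bao2009gradient,kang2014characterization}. The main obstacle is precisely the appearance of the logarithmic factor together with the optimal constant: the $|\ln\epsilon|^{-1}$ correction is the three-dimensional feature distinguishing this rate from $\epsilon^{-1/2}$ in 2D and $\epsilon^{-1}$ in higher dimensions, and it emerges only after a careful resummation of the series in the neck. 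Establishing the two-sided estimate implicit in $\sim$ — in particular the sharp lower bound attained at the center of the gap — is the delicate point, and throughout one must control the bounded contributions (the incident term, the transverse polarization, and the regular parts of the layer potentials) uniformly in $\epsilon$ so they do not overwhelm the singular term.
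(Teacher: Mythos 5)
Your reduction of $\bE_0$ to the perfect-conductor electrostatic problem is essentially the paper's own route: the paper proves $\nabla\times\bE_0=\nabla\cdot\bE_0=0$ and $\bE_0=\nabla u$ (Lemma \ref{lem63}), then uses $[\nu\times\bE]=0$ and the transmission condition of Lemma \ref{lemma1} together with $\varepsilon_c=\tilde{C}/\omega$ to get $(\partial_\nu u)^-=0$, hence $\nabla u=0$ in $D$ and $u^+=u^-+C_i$ on $\partial B_i$ (Lemma \ref{E0u}). Where you diverge is only in the last step: the paper does \emph{not} redo the bispherical-coordinate asymptotics, but instead cites Theorem 2.1 of \cite{lim2009blow}, which gives exactly the two-sided estimate $\|\nabla u\|_{L^\infty}\sim C^* r|p_1|(\epsilon|\ln\epsilon|)^{-1}$ for the system \eqref{u_master}. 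Your plan to rederive that estimate by separation of variables is in principle viable (it is how the cited result is obtained), but as written it is only a sketch of the hardest part, whereas the citation closes the argument immediately.

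There is one genuine gap in your reduction. The boundary-value problem you arrive at --- harmonic exterior, $u$ constant on each ball, $u-\mathbf{p}\cdot\mathbf{x}$ decaying --- is \emph{not} well-posed: the two constants $C_1,C_2$ are free, and different choices give different gap fields with different blow-up rates (prescribing unequal potentials would give a field of order $\epsilon^{-1}$ in the neck, not $(\epsilon|\ln\epsilon|)^{-1}$). The system must be closed by the zero-total-flux condition $\int_{\partial B_i}\partial u/\partial\nu=0$, $i=1,2$, which is precisely the fourth line of \eqref{u_master} and is the hypothesis under which Theorem 2.1 of \cite{lim2009blow} yields the $(\epsilon|\ln\epsilon|)^{-1}$ rate. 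This condition does not follow from anything you have written: the paper obtains it by going to the \emph{next} order in $\omega$ of the transmission condition, namely $\int_{\partial B_i}(\nu\cdot\bE_0)^+=\tilde{C}\int_{\partial B_i}(\nu\cdot\bE_1)^-=\tilde{C}\int_{B_i}\nabla\cdot\bE_1=0$, using $\nabla\cdot\bE_1=(\mathrm{i}/\tilde{C})\nabla\cdot\nabla\times\bH_0=0$ in $D$. You should add this step (and, more minorly, justify the $O(|\mathbf{x}|^{-2})$ decay of $u-\mathbf{p}\cdot\mathbf{x}$, which the paper gets from the mean-zero property of the densities $\phibf^0$ and $\nabla_{\partial D}\cdot\psibf^0$) before the separation-of-variables analysis can be invoked.
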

Before proving Lemma \ref{le0}, let us first present the following preliminary results.
\begin{lem}\label{lem63}
Let $(\bE,\bH)$ be the soultion of \eqref{eq:maxwell}, and let $\bE_{0}$ represent the leading-order term of $\bE$. Then, there exists a function $u$ in $H^{1}_{loc}(\mathbb{R}^3)$ such that
\begin{equation}\label{e0eqgu}
\bE_0=\nabla u , \quad \mbox{in} \quad\mathbb{R}^3\setminus\partial D,
\end{equation}
and
\begin{equation}\label{master_u}
\Delta  u =0, \quad \mbox{in} \quad\mathbb{R}^3\setminus\partial D.
\end{equation}
\begin{proof}
For $\mathbf{x}\in \mathbb{R}^3 \setminus \overline{{D}}$, taking the curl of $\bE_0$ in \eqref{represent_e0} and combine it with \eqref{as_in2} to obtain the following expression:
\begin{equation*}%\label{curl_E0}
\begin{aligned}
\nabla \times \bE_0(\mathbf{x})
%&=\nabla \times\bE^{i}_0(\mathbf{x})+\nabla \times\nabla \times \mathcal{A}_{D}^{0} [\phibf^0]]+\nabla\times\nabla \acute{{S}} _D^{0}[\g_{\dr D}\cdot [\psibf^0]]\\
%\nm
\ds=\nabla \times\bE^{i}_0(\mathbf{x})+\nabla \acute{{S}} _D^{0}[\g_{\dr D}\cdot [\phibf^0]]
\ds=-\nabla \acute{{S}} _D^{0}[{\nu}\cdot\nabla \times\bE^{i}_0|_{\dr D} ]
=0,
\end{aligned}
\end{equation*}
taking the divergence of $\mathbf{E}_0$, we obtain
\begin{equation*}%\label{div_E0}
\begin{aligned}
\nabla \cdot \bE_0(\mathbf{x})=\nabla \cdot\bE^{i}_0(\mathbf{x})+  \nabla \cdot \nabla \times \acute{\mathcal{A}}_{D}^{0}
[\phibf^0](\mathbf{x}) +\nabla \cdot\nabla\acute{{S}} _D^{0}[\g_{\dr D}\cdot [\psibf^0]]=0.
\end{aligned}
\end{equation*}
Similarly, for $\mathbf{x}\in {D}$, there is
\begin{equation*}%\label{pro_e0}
\nabla \times \bE_0(\mathbf{x})=0,\quad \nabla \cdot \bE_0(\mathbf{x})=0.
\end{equation*}
Therefore, since $\mathbf{E}_0$ is a curl-free function, according to the Helmholtz decomposition, there exists a function $u$ in the local Sobolev space $H^1_{\text{loc}}(\mathbb{R}^3)$ such that
\begin{equation*}
\bE_0=\nabla u , \quad \mbox{in} \quad\mathbb{R}^3\setminus\partial D.
\end{equation*}
Furthermore, by using the fact $\nabla \cdot \bE_0=0$, we arrive at 
\begin{equation*}
\Delta  u =0, \quad \mbox{in} \quad\mathbb{R}^3\setminus\partial D.
\end{equation*}
The proof is complete.
\end{proof}
\end{lem}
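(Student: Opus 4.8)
The plan is to show that the leading-order field $\bE_0$ is both curl-free and divergence-free on $\mathbb{R}^3\setminus\dr D$, so that the existence of a harmonic scalar potential $u$ follows at once from the Poincar\'e lemma together with $\Delta u=\nabla\cdot\nabla u=\nabla\cdot\bE_0$. The work therefore reduces to two pointwise computations, carried out separately on the exterior region $\mathbb{R}^3\setminus\ol D$ and on the interior $D$, using the explicit representation \eqref{represent_e0} and the operator identities established in Section \ref{sec22}.

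First I would compute $\nabla\times\bE_0$ on $\mathbb{R}^3\setminus\ol D$. The incident part $\bE^{i}_0=\mathbf{p}$ is constant by \eqref{as_in2}, so its curl vanishes; the gradient term $\nabla\acute{S}_D^0[\nabla_{\dr D}\cdot\psibf^0]$ is annihilated because the curl of a gradient is zero. The only surviving contribution is $\nabla\times\nabla\times\acute{\mathcal{A}}_D^0[\phibf^0]$, which by the static limit of the curl--curl identity \eqref{kerkerA} equals $\nabla\acute{S}_D^0[\nabla_{\dr D}\cdot\phibf^0]$. Inserting $\phibf^0=\nu\times\bE^{i}_0$ from \eqref{phi1} and invoking the surface-calculus identity $\nabla_{\dr D}\cdot(\nu\times\mathbf{F})=-\nu\cdot(\nabla\times\mathbf{F})$ rewrites this as $-\nabla\acute{S}_D^0[\nu\cdot(\nabla\times\bE^{i}_0)|_{\dr D}]$, which again vanishes since $\bE^{i}_0$ is constant. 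Hence $\nabla\times\bE_0=0$ in the exterior.

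Next I would compute the divergence $\nabla\cdot\bE_0$ on the same region. Here $\nabla\cdot\bE^{i}_0=0$ (a constant field, equivalently $\mathbf{p}\perp\mathbf{d}$), the middle term satisfies $\nabla\cdot(\nabla\times\acute{\mathcal{A}}_D^0[\phibf^0])=0$ as a divergence of a curl, and the last term gives $\nabla\cdot\nabla\acute{S}_D^0[\,\cdot\,]=\Delta\acute{S}_D^0[\,\cdot\,]=0$ away from $\dr D$, since the static single-layer potential is harmonic off the boundary. Thus $\nabla\cdot\bE_0=0$. The same two computations, applied to the interior branch of \eqref{represent_e0} (where the constant incident term is absent), yield $\nabla\times\bE_0=0$ and $\nabla\cdot\bE_0=0$ in $D$ as well.

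With $\bE_0$ curl-free on each of the simply connected components of $\mathbb{R}^3\setminus\dr D$ (the two balls and the connected exterior of $\ol D$ in $\mathbb{R}^3$), the Poincar\'e lemma produces a scalar potential $u$, determined up to an additive constant on each component, with $\bE_0=\nabla u$; the $H^1_{loc}$ regularity of $u$ is inherited from the mapping properties of the layer potentials entering \eqref{represent_e0}, giving \eqref{e0eqgu}. Finally $\Delta u=\nabla\cdot\bE_0=0$ off $\dr D$, which is \eqref{master_u}. I expect the main obstacle to be the curl computation: correctly taking the static limit of the curl--curl identity \eqref{kerkerA} and applying the surface-divergence identity so that the residual term collapses thanks to the constancy of $\bE^{i}_0$. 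The divergence computation and the topological step producing $u$ are comparatively routine.
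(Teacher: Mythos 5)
Your proposal is correct and follows essentially the same route as the paper: both compute $\nabla\times\bE_0=0$ and $\nabla\cdot\bE_0=0$ from the representation \eqref{represent_e0}, using the static limit of \eqref{kerkerA}, the identity $\nabla_{\dr D}\cdot(\nu\times\mathbf{F})=-\nu\cdot(\nabla\times\mathbf{F})$ with $\phibf^0=\nu\times\bE^{i}_0$, and the constancy of $\bE^{i}_0$, before invoking the existence of a scalar potential (the paper cites the Helmholtz decomposition where you cite the Poincar\'e lemma componentwise, which is the same point). Your write-up is, if anything, slightly more explicit than the paper's about why the residual single-layer term collapses.
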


\begin{lem}\label{E0u}
Let $(\mathbf{E},\mathbf{H})$ be the solution of equation \eqref{eq:maxwell}, and let  $\mathbf{E}_0$ be the leading-order term of $\mathbf{E}$. Then there exists a function $u$ in $H^{1}_{loc}(\mathbb{R}^3)$ such that $\mathbf{E}_0=\nabla u$ in $\mathbb{R}^3\setminus\partial D$ and it satisfies the following equations:
\begin{equation}\label{u_master}
\left \{
\begin{aligned}
&\Delta u = 0 \hspace*{3.90cm} \mbox{in} \quad \RR^3\setminus\partial D,\\
&\ds u^+= u^-+C  \hspace*{2.55cm}\quad \mbox{on} \quad \partial D,\\
&\nabla u=0\hspace*{3.55cm}\quad \mbox{in} \quad  D,\\
&\int_{\partial B_i }^{} \frac{\partial u}{\partial \nu} =0, \hspace*{2.45cm}\quad \mbox{on} \quad \partial B_i \left (i=1,2 \right ) ,\\
&u(\mathrm{x})-H(\mathrm{x})=O\left(|\mathrm{x}|^{-2}\right) \qquad \text { as }|\mathrm{x}| \rightarrow \infty,
\end{aligned}
\right.
\end{equation}
where $H(\mathbf{x})=\mathbf{p}\cdot\mathbf{x}.$
\begin{proof}
By Lemma \ref{lem63}, there exists  $u \in H^{1}_{loc}(\mathbb{R}^3)$ such that 
$\bE_0=\nabla u, \Delta  u =0$ in $\mathbb{R}^3\setminus\partial D$.
 
By the boundary condition 
\[
[\nu \times \mathbf{E}] = 0 \quad \text{on} \quad \partial D,
\]
it is easy to see that $[\nu \times \mathbf{E}_0] = 0$. Furthermore, since 
\begin{equation*}
\nu\times\bE_0=\nu\times\nabla u.
\end{equation*}
we can conclude that 
\begin{equation}\label{partial_T}
\left(\frac{\partial u}{\partial T} \right)^{+} =\left(\frac{\partial u}{\partial T} \right)^{-}\quad \text { on } \partial B_1\cup \partial B_2,
\end{equation}
where  $T$ is tangential direction on $\partial B$.
 \eqref{partial_T} equivalent to
\begin{equation*}
u^{+}=u^{-}+C_i,\quad \text{on}~ \partial B_i,\quad i=1,2,
\end{equation*}
where $C_i,~i=1,2$ are constants.

By substituting the asymptotic expansion of $\mathbf{E}$ into the boundary condition \eqref{boundary}, we can obtain the following expression 
\begin{equation}\label{boundary_E}
\left(\nu \cdot \left ( \bE_0+k\bE_1+O(k^2) \right )\right)^{+}=\left(\nu \cdot \varepsilon_{c}   \left ( \bE_0+k\bE_1+O(k^2) \right )\right)^{-} \quad \text { on } \partial {D},
\end{equation}
together with $\varepsilon _c=O(\frac{1}{\omega })$, we can establish the boundary condition for the leading-order term $\bE_0$:
\begin{equation}\label{boundary_e_0}
\left(\nu \cdot \bE_0\right)^{-}=0 \quad \text { on } \partial {D}.
\end{equation}
By using $\bE_0=\nabla u$, we can write that
\begin{equation*}%\label{boundary_e0}
\left(\frac{\partial u}{\partial \nu}\right)^{-}=0 \quad \text { on } \partial {D}.
\end{equation*}
Furthermore, using \eqref{master_u} and Green's first theorem, it can be deduced that 
\begin{equation*}%\label{inte0}
0\le \int_{D }\left | \nabla u \right | ^2\mathrm{d}\mathbf{x} = \int_{\partial D }\left (u\frac{\partial u}{\partial \nu}   \right ) ^-\mathrm{d}s=0,
\end{equation*}
Consequently, we can conclude that  $\nabla u=0$ in $D$.

From \eqref{eq:maxwell}, it follows that
\begin{equation*}
\nabla  \times \bH= - \mathrm{i}\omega \varepsilon_c \bE  \quad\mbox{in} \quad D,
\end{equation*}
 from the asymptotic expansions of $\bE$ and $\bH$, we have
\begin{equation*}
\nabla  \times \bH_0= - \mathrm{i}\omega \varepsilon_c \bE_1=-\mathrm{i}\tilde{C} \bE_1, \quad \mbox{in} \quad D,
\end{equation*}
and
\begin{equation*}
\nabla\cdot \bE_1=\mathrm{i}/\tilde{C} \nabla \cdot \nabla  \times \bH_0=0,
\end{equation*}
therefore, we can obtain that
\begin{equation*}
\int_{\partial B_i }^{} \frac{\partial u}{\partial \nu} =\int_{\partial B_i }^{} (\nu\cdot \bE_0)^+ \\
=\tilde{C} \int_{\partial B_i }^{} (\nu\cdot \bE_1)^-=\tilde{C} \int_{ B_i }^{} \nabla \cdot \bE_1 =0,\quad i=1,2.
\end{equation*}
Finally, note that
$\phibf^0\in \mathcal{L} _0^2\left (  \partial D \right )^3$, $\g_{\dr D}\cdot \psibf^0\in \mathcal{L} _0^2\left (  \partial D \right )$, indeed, there is
\begin{equation*}
\int_{\partial D }\phi ^0(y)d\sigma_y=\int_{\partial D }\nu\times \bE_0^{i} d\sigma_y=\int_{ D }\nabla \times \bE_0^{i} d\sigma_y=0,
\end{equation*}
similarly, by taking surface divergence in \eqref{phi2}, we have
\begin{equation*}
\int_{\partial D} \nabla_{\partial D}\cdot  \psi^0  \mathrm{d}\sigma  =- \int_{\partial D}\psi^0 \nabla_{\partial D}[1] \mathrm{d}\sigma=0.
\end{equation*}
Then, by using the fact (see, e.g. \cite{kangbook})
\begin{equation*}
\left\{
\begin{aligned}
&\acute{{S}} _D^{0}[\g_{\dr D}\cdot [\psibf^0]]=O\left(|\mathrm{x}|^{-2}\right) \quad \text { as }|\mathrm{x}| \rightarrow \infty,\\
& \mathcal{A}_{D}^{0}
[\phibf^0](x)=O\left(|\mathrm{x}|^{-2}\right) \quad \text { as }|\mathrm{x}| \rightarrow \infty,
\end{aligned}
\right.
\end{equation*}
and $\nabla \left ( \mathbf{p}\cdot\mathbf{x} \right ) =\mathbf{p}$,
we obtain
\begin{equation*}
u(\mathrm{x})-\mathbf{p}\cdot\mathbf{x}=O\left(|\mathrm{x}|^{-2}\right) \qquad \text { as }|\mathrm{x}| \rightarrow \infty.
\end{equation*}

\end{proof}
\end{lem}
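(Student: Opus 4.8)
The plan is to verify the five conditions bundled in \eqref{u_master} one at a time, leaning on the representation \eqref{represent_e0} of $\bE_0$ together with Lemmas \ref{lem63} and \ref{lemma1}. First I would invoke Lemma \ref{lem63}, which already supplies $u\in H^1_{loc}(\mathbb{R}^3)$ with $\bE_0=\nabla u$ in $\mathbb{R}^3\setminus\dr D$ and $\Delta u=0$ there; this is precisely the first line of \eqref{u_master}. For the jump relation $u^+=u^-+C$, I would pass the transmission condition $[\nu\times\bE]=0$ of \eqref{eq:maxwell} to leading order to get $[\nu\times\bE_0]=0$, and then substitute $\bE_0=\nabla u$. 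Continuity of $\nu\times\nabla u$ is exactly continuity of the tangential derivative of $u$ across each $\dr B_i$, and integrating this along the connected component forces $u^+-u^-$ to equal a constant $C_i$ on $\dr B_i$.

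For the interior condition $\nabla u=0$ in $D$, the crucial ingredient is the high-contrast transmission identity \eqref{boundary}, $(\nu\cdot\varepsilon\bE)^+=(\nu\cdot\varepsilon_c\bE)^-$, furnished by Lemma \ref{lemma1}. I would insert $\bE=\bE_0+k\bE_1+O(k^2)$ together with the scaling $\varepsilon_c=\tilde{C}/\omega$ and read off the most singular power $k^{-1}$; balancing it can hold only if $(\nu\cdot\bE_0)^-=0$ on $\dr D$. Rewriting this as $(\dr u/\dr\nu)^-=0$ and combining it with $\Delta u=0$ in $D$ through Green's first identity yields $\int_D|\nabla u|^2=0$, whence $\nabla u\equiv0$ in $D$.

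For the zero-flux conditions I would return to \eqref{boundary} at order $k^0$, which reads $(\nu\cdot\bE_0)^+=\tilde{C}(\nu\cdot\bE_1)^-$, and combine it with $\nabla\cdot\bE_1=0$ in $D$; the latter follows by taking the divergence of the first-order Maxwell relation $\nabla\times\bH_0=-\mathrm{i}\tilde{C}\,\bE_1$ valid in $D$. Then $\int_{\dr B_i}(\dr u/\dr\nu)^+=\int_{\dr B_i}(\nu\cdot\bE_0)^+=\tilde{C}\int_{\dr B_i}(\nu\cdot\bE_1)^-=\tilde{C}\int_{B_i}\nabla\cdot\bE_1=0$ by the divergence theorem. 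For the behavior at infinity I would first check that the densities are mean-zero: $\int_{\dr D}\phi^0=\int_{\dr D}\nu\times\bE_0^i=\int_D\nabla\times\bE_0^i=0$, and taking surface divergence in \eqref{phi2} gives $\int_{\dr D}\nabla_{\dr D}\cdot\psi^0=0$. These zero-mean properties let me invoke the standard decay $\acute{\mathcal{A}}_D^0[\phi^0]=O(|x|^{-2})$ and $\acute{S}_D^0[\nabla_{\dr D}\cdot\psi^0]=O(|x|^{-2})$ in \eqref{represent_e0}, so that only the incident part $\bE_0^i=\mathbf{p}=\nabla(\mathbf{p}\cdot\mathbf{x})$ survives at infinity; integrating $\bE_0=\nabla u$ then gives $u(x)-\mathbf{p}\cdot\mathbf{x}=O(|x|^{-2})$, that is, $u-H=O(|x|^{-2})$.

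The step I expect to be delicate is the order-matching in the high-contrast identity \eqref{boundary}: because $\varepsilon_c$ blows up like $\omega^{-1}$ while $\bE$ is expanded in powers of $k=\omega$, one must simultaneously track the $k^{-1}$ and $k^0$ balances and confirm that the singular contribution genuinely pins down $(\nu\cdot\bE_0)^-=0$ without leaking into the next-order flux computation. The auxiliary identity $\nabla\cdot\bE_1=0$ in $D$, extracted from the first-order Maxwell relation, is exactly what makes the zero-flux conclusion close; everything else reduces to continuity of tangential data, an energy argument, and the classical far-field decay of the static layer potentials.
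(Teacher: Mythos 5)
Your proposal is correct and follows essentially the same route as the paper's own proof: Lemma \ref{lem63} for $\bE_0=\nabla u$ and harmonicity, the tangential continuity $[\nu\times\bE_0]=0$ for the jump $u^+=u^-+C_i$, the order-matching in the high-contrast identity \eqref{boundary} (the $k^{-1}$ balance giving $(\nu\cdot\bE_0)^-=0$, hence $\nabla u=0$ in $D$ via Green's identity, and the $k^{0}$ balance combined with $\nabla\cdot\bE_1=0$ from $\nabla\times\bH_0=-\mathrm{i}\tilde{C}\bE_1$ giving the zero-flux condition), and the mean-zero properties of $\phi^0$ and $\nabla_{\partial D}\cdot\psi^0$ for the $O(|\mathbf{x}|^{-2})$ far-field behavior. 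The delicate point you flag, tracking the $k^{-1}$ and $k^{0}$ balances simultaneously, is handled in the paper exactly as you propose (equations \eqref{boundary_E}, \eqref{boundary_e_0} and \eqref{e0e1}), so there is no substantive difference.
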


%\subsubsection{Proof of Lemma \ref{le0}}
\begin{proof}[Proof of Lemma \ref{le0}:]
	By Lemma \ref{E0u}, $u$ satisfies system \eqref{u_master}, by using the conclusion in Theorem 2.1 of \cite{lim2009blow}, there exists a positive constant $C^*$ that depends only on $r$ and $\mathbf{p}$, such that
	\begin{equation*}
	\|\nabla u\|_{L^\infty(\mathbb{R}^3\backslash \overline{B_1\cup B_2})} \sim C^*r|p_1| \frac{1}{\epsilon |\ln \epsilon|}.
	\end{equation*}
	It then follows from (\ref{e0eqgu})  that
		\begin{equation*}
	\begin{aligned}
	\|\bE_0\|_{L^\infty(\mathbb{R}^3\backslash \overline{B_1\cup B_2})} \sim C^*r|p_1| \frac{1}{\epsilon |\ln \epsilon|}.
	\end{aligned}
	\end{equation*}
	The proof is complete.
\end{proof}

\subsection{The analysis of $\bE_1$}

It is remained to investigate the properties of the  $\bE_1$. Before our analysis, we introduce some propositions and important lemmas.

For any $\mathbf{x} \in \partial B_1$, let $\tilde{\mathbf{x}}:=-\mathbf{x} \in \partial B_2$, for any $\phi \in L^2_T(\partial D)$, $\varphi \in L^2(\dr D)$, define $\tilde{\phi } \left ( \tilde{\mathbf{x}}  \right ) :=\phi \left ( \mathbf{x} \right ) $, $\tilde{\varphi } \left ( \tilde{\mathbf{x}}  \right ) :=\varphi \left ( \mathbf{x} \right )$. Based on these definitions, we can establish the following results.

\begin{prop}
	For  $\mathbf{x} \in \partial B_1$, we have
	\begin{equation}\label{nueoin}
	\nu_{B_1}(\mathbf{x})\cdot \bE_0^{i}(\mathbf{x})=-\nu_{B_2}(\tilde{\mathbf{x}} )\cdot \bE_0^{i}(\tilde{\mathbf{x}} ).
	\end{equation}
	%\begin{equation*}%\label{kereoin}
%	\nu_{B_1}(\mathbf{x})\times %\bE_0^{i}(\mathbf{x})=-\nu_{B_2}(\tilde{\mathbf{x}} )\times %\bE_0^{i}(\tilde{\mathbf{x}} ).
%	\end{equation*}
	%\begin{equation}\label{nue1in}
	%\nu_{B_1}(\mathbf{x})\cdot \bE_1^{i}(\mathbf{x})=\nu_{B_2}(\tilde{\mathbf{x}} )\cdot \bE_1^{i}(\tilde{\mathbf{x}} ).
%	\end{equation}
	\begin{equation}\label{kere1in}
	\nu_{B_1}(\mathbf{x})\times \bE_1^{i}(\mathbf{x})=\nu_{B_2}(\tilde{\mathbf{x}} )\times \bE_1^{i}(\tilde{\mathbf{x}} ).
	\end{equation}
	\begin{proof}
	For  $\mathbf{x} \in \partial B_1$, and write $\tilde{\mathbf{x}}=-\mathbf{x} \in \partial B_2$, we have
	\begin{equation*}
	\nu_{B_1}(\mathbf{x})=-\nu_{B_2}(\tilde{\mathbf{x}} ),
	\end{equation*}
	by \eqref{as_in2}, $\bE_0^{i}=\mathbf{p}$, $\bE_1^{i}=\mathrm{i}\mathbf{p}(\mathbf{x}\cdot\mathbf{d})$,
	then it is easy to obtain the results shown by \eqref{nueoin} and \eqref{kere1in}.
	 The proof is complete.
	\end{proof}
\end{prop}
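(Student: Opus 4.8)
The plan is to reduce both identities to the antipodal symmetry of the two-sphere configuration, combined with the explicit low-frequency expansion \eqref{as_in2} of the incident electric field. Since $B_1$ and $B_2$ are spheres of equal radius $r$ centered at $\mathbf{c}_1=(-r-\frac{\epsilon}{2},0,0)$ and $\mathbf{c}_2=(r+\frac{\epsilon}{2},0,0)=-\mathbf{c}_1$, the reflection $\mathbf{x}\mapsto\tilde{\mathbf{x}}=-\mathbf{x}$ carries $\partial B_1$ onto $\partial B_2$. First I would record the induced relation between the outward unit normals: writing $\nu_{B_1}(\mathbf{x})=(\mathbf{x}-\mathbf{c}_1)/r$ and $\nu_{B_2}(\tilde{\mathbf{x}})=(\tilde{\mathbf{x}}-\mathbf{c}_2)/r$, substituting $\tilde{\mathbf{x}}=-\mathbf{x}$ and $\mathbf{c}_2=-\mathbf{c}_1$ yields $\nu_{B_2}(\tilde{\mathbf{x}})=(-\mathbf{x}+\mathbf{c}_1)/r=-\nu_{B_1}(\mathbf{x})$. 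This single geometric identity drives both claimed relations.

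Next I would invoke the explicit leading terms from \eqref{as_in2}, namely $\bE_0^{i}=\mathbf{p}$ and $\bE_1^{i}(\mathbf{x})=\mathrm{i}\mathbf{p}\,(\mathbf{x}\cdot\mathbf{d})$, and note their parity under $\mathbf{x}\mapsto-\mathbf{x}$. The zeroth-order field is constant, hence even, so $\bE_0^{i}(\tilde{\mathbf{x}})=\bE_0^{i}(\mathbf{x})=\mathbf{p}$, whereas the first-order field is linear in $\mathbf{x}$, hence odd, giving $\bE_1^{i}(\tilde{\mathbf{x}})=\mathrm{i}\mathbf{p}\,(-\mathbf{x}\cdot\mathbf{d})=-\bE_1^{i}(\mathbf{x})$. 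The proof then amounts to matching these two parities against the sign flip in the normal relation.

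For \eqref{nueoin} I would compute $-\nu_{B_2}(\tilde{\mathbf{x}})\cdot\bE_0^{i}(\tilde{\mathbf{x}})=-\big(-\nu_{B_1}(\mathbf{x})\big)\cdot\mathbf{p}=\nu_{B_1}(\mathbf{x})\cdot\mathbf{p}=\nu_{B_1}(\mathbf{x})\cdot\bE_0^{i}(\mathbf{x})$, so the lone minus sign carried by the normal is exactly the prescribed minus on the right-hand side, and the even parity of $\bE_0^{i}$ leaves nothing further. For \eqref{kere1in} I would instead compute $\nu_{B_2}(\tilde{\mathbf{x}})\times\bE_1^{i}(\tilde{\mathbf{x}})=\big(-\nu_{B_1}(\mathbf{x})\big)\times\big(-\bE_1^{i}(\mathbf{x})\big)=\nu_{B_1}(\mathbf{x})\times\bE_1^{i}(\mathbf{x})$, where now two sign flips, one from the normal and one from the odd parity of $\bE_1^{i}$, cancel, producing the stated equality with no extra sign.

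There is no genuine obstacle here; the content is purely bookkeeping of signs, and the one thing to watch is keeping the parity of each order of the incident field aligned with the $-1$ factor in the normal relation. The point worth flagging is conceptual rather than technical: it is precisely the parity mismatch between the even $\bE_0^{i}$ and the odd $\bE_1^{i}$ that yields opposite symmetry types, a dot-product relation carrying a minus sign at leading order and a cross-product relation without one at first order, which is the structural feature later exploited when using the reflection symmetry of the integral operators on $\partial D$.
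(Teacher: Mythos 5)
Your proposal is correct and follows essentially the same route as the paper: both rest on the normal relation $\nu_{B_1}(\mathbf{x})=-\nu_{B_2}(\tilde{\mathbf{x}})$ together with the explicit forms $\bE_0^{i}=\mathbf{p}$ and $\bE_1^{i}=\mathrm{i}\mathbf{p}(\mathbf{x}\cdot\mathbf{d})$ from \eqref{as_in2}. You merely make explicit the even/odd parity bookkeeping that the paper leaves as ``easy to obtain.''
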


\begin{prop}
For  $\mathbf{x} \in \partial B_1$, $\phi \in L^2_T(\partial D)$, we have
\begin{equation*}\label{nunu}
\nu_{B_1}(\mathbf{x})\cdot\nabla \times  \mathcal{A}_{B_1\cup B_2}^{k} [\phi](\mathbf{x})=\nu_{B_2}(\tilde{\mathbf{x}} )\cdot\nabla \times \mathcal{A}_{B_1\cup B_2}^{k} [\tilde{ \phi}   ] (\tilde{\mathbf{x}} ).
\end{equation*}
\begin{equation*}%\label{nukerA}
\nu_{B_1}(\mathbf{x})\times \mathcal{A}_{B_1\cup B_2}^{k} [\phi](\mathbf{x})=-\nu_{B_2}(\tilde{\mathbf{x}} )\times \mathcal{A}_{B_1\cup B_2}^{k}  [\tilde{ \phi}   ] (\tilde{\mathbf{x}} ).
\end{equation*}
	\begin{equation*}%\label{nucdotA}
\nu_{B_1}(\mathbf{x})\cdot {\mathcal{A}}_{B_1}^{k} [\phi](\mathbf{x})=-\nu_{B_2}(\tilde{\mathbf{x}} )\cdot {\mathcal{A}}_{B_2}^{k}  [\tilde{ \phi}   ] (\tilde{\mathbf{x}} ).
\end{equation*}
\begin{proof}
Let $\mathbf{x} \in \partial B_1$,   and write $\tilde{\mathbf{x}}=-\mathbf{x} \in \partial B_2$, then
\begin{equation*}
\nu_{B_1}({\mathbf{x}} )\cdot\nabla_{{\mathbf{x}} } \times  \mathcal{A}_{B_1\cup B_2}^{k} [\phi]({\mathbf{x}} )
=-\nu_{B_2}(\tilde{\mathbf{x}} )\cdot\left ( -\nabla_{\tilde{\mathbf{x}} } \right )  \times\int_{\partial B_1\cup \partial B_2} {\frac{-e^{\mathrm{i}k|-\tilde{\mathbf{x}} -\mathbf{y}|}}{4 \pi |-\tilde{\mathbf{x}} -\mathbf{y}|}  \phi(\mathbf{y}) d \sigma(\mathbf{y})}.
\end{equation*}
\begin{equation*}
\nu_{B_1}({\mathbf{x}} ) \times  \mathcal{A}_{B_1\cup B_2}^{k} [\phi]({\mathbf{x}} )
=-\nu_{B_2}(\tilde{\mathbf{x}} ) \times\int_{ \partial B_1\cup \partial B_2}{\frac{-e^{\mathrm{i}k|-\tilde{\mathbf{x}} -\mathbf{y}|}}{4 \pi |-\tilde{\mathbf{x}} -\mathbf{y}|}  \phi(\mathbf{y}) d \sigma(\mathbf{y})}.
\end{equation*}
\begin{equation*}
\nu_{B_1}({\mathbf{x}} )\cdot \mathcal{A}_{B_2}^{k} [\phi]({\mathbf{x}} )
=-\nu_{B_2}(\tilde{\mathbf{x}} )\cdot\int_{ \partial B_2}{\frac{-e^{\mathrm{i}k|-\tilde{\mathbf{x}} -\mathbf{y}|}}{4 \pi |-\tilde{\mathbf{x}} -\mathbf{y}|}  \phi(\mathbf{y}) d \sigma(\mathbf{y})}.
\end{equation*}
Changing $\mathbf{y}$ by $\tilde{\mathbf{y}}=-\mathbf{y}$ in the integral, we get
\begin{equation*}
\nu_{B_1}({\mathbf{x}} )\cdot\nabla_{{\mathbf{x}} } \times  \mathcal{A}_{B_1\cup B_2}^{k} [\phi]({\mathbf{x}} )
=\nu_{B_2}(\tilde{\mathbf{x}} )\cdot\nabla_{\tilde{\mathbf{x}} }   \times\int_{\partial B_1\cup \partial B_2}{\frac{-e^{\mathrm{i}k|\tilde{\mathbf{x}} -\tilde{\mathbf{y}} |}}{4 \pi |\tilde{\mathbf{x}} -\tilde{\mathbf{y}} |}  \tilde{\phi} (\tilde{\mathbf{y}} ) d \sigma(\tilde{\mathbf{y}} )},
\end{equation*}
\begin{equation*}
\nu_{B_1}({\mathbf{x}} ) \times  \mathcal{A}_{B_1\cup B_2}^{k} [\phi]({\mathbf{x}} )
=-\nu_{B_2}(\tilde{\mathbf{x}} ) \times\int_{\partial B_1\cup \partial B_2}{\frac{-e^{\mathrm{i}k|\tilde{\mathbf{x}} -\tilde{\mathbf{y}} |}}{4 \pi |\tilde{\mathbf{x}} -\tilde{\mathbf{y}} |}  \tilde{\phi} (\tilde{\mathbf{y}} ) d \sigma(\tilde{\mathbf{y}} )},
\end{equation*}
\begin{equation*}
\nu_{B_1}({\mathbf{x}} )\cdot \mathcal{A}_{B_2}^{k} [\phi]({\mathbf{x}} )
=-\nu_{B_2}(\tilde{\mathbf{x}} )\cdot\int_{ \partial B_1}{\frac{-e^{\mathrm{i}k|\tilde{\mathbf{x}} -\mathbf{y}|}}{4 \pi |\tilde{\mathbf{x}} -\mathbf{y}|}  \tilde{\phi} (\tilde{\mathbf{y}} ) d \sigma(\tilde{\mathbf{y}} )},
\end{equation*}
which verify the results.
\end{proof}
\end{prop}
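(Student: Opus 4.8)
The plan is to exploit the point symmetry of the configuration about the origin. Since the centres satisfy $\mathbf{c}_1=-\mathbf{c}_2$, the reflection $R:\mathbf{x}\mapsto-\mathbf{x}$ carries $B_1$ onto $B_2$ and hence maps $D=B_1\cup B_2$ onto itself; for $\mathbf{x}\in\partial B_1$ and $\tilde{\mathbf{x}}=-\mathbf{x}\in\partial B_2$ this yields the normal relation $\nu_{B_1}(\mathbf{x})=-\nu_{B_2}(\tilde{\mathbf{x}})$ already recorded in the preceding proposition. The whole argument then rests on three elementary invariances of the vectorial single layer potential under $R$. First, the kernel $\Gamma^k$ is radial, hence even, so that after setting $\tilde{\mathbf{y}}=-\mathbf{y}$ one has $\Gamma^k(-\tilde{\mathbf{x}}-\mathbf{y})=\Gamma^k(\tilde{\mathbf{x}}-\tilde{\mathbf{y}})$. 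Second, the surface measure is preserved, $d\sigma(\mathbf{y})=d\sigma(\tilde{\mathbf{y}})$, with $\partial D$ invariant while $\partial B_2$ is carried to $\partial B_1$. Third, the chain rule gives $\nabla_{\mathbf{x}}=-\nabla_{\tilde{\mathbf{x}}}$.

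With these ingredients the three identities follow by one and the same two-line computation, differing only in sign bookkeeping. First I would insert $\mathbf{x}=-\tilde{\mathbf{x}}$ and $\nu_{B_1}(\mathbf{x})=-\nu_{B_2}(\tilde{\mathbf{x}})$ into the left-hand side, producing one factor of $-1$; when a curl is present I would additionally replace $\nabla_{\mathbf{x}}$ by $-\nabla_{\tilde{\mathbf{x}}}$, producing a second factor of $-1$. Next I would change variables $\mathbf{y}\mapsto\tilde{\mathbf{y}}=-\mathbf{y}$ in the integral and use the evenness of $\Gamma^k$, the invariance of the measure, and the definition $\tilde{\phi}(\tilde{\mathbf{y}})=\phi(\mathbf{y})$ to recognise the transformed integral as $\mathcal{A}^k_{B_1\cup B_2}[\tilde{\phi}](\tilde{\mathbf{x}})$ (respectively $\mathcal{A}^k_{B_1}[\tilde{\phi}](\tilde{\mathbf{x}})$ in the single-ball case, since $R$ sends $\partial B_2$ to $\partial B_1$). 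Collecting the accumulated signs then yields the stated relations.

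The sign count is what distinguishes the three cases and is the only place where care is needed, so it is the anticipated main obstacle---though it is bookkeeping rather than genuine difficulty. In the $\nu\cdot\nabla\times$ identity the two flips, one from the normal and one from $\nabla_{\mathbf{x}}$, cancel and leave a $+$ sign; in the $\nu\times$ identity only the normal flips, and the cross product with $-\nu_{B_2}(\tilde{\mathbf{x}})$ leaves a single $-1$; in the scalar single-ball identity again only the normal contributes, giving the $-1$, with the extra subtlety that the change of variables swaps the integration domain from $\partial B_2$ to $\partial B_1$. Everything else is a direct substitution justified entirely by the radial symmetry of the fundamental solution, so no additional estimates or regularity arguments are required.
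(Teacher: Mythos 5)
Your proposal is correct and follows essentially the same route as the paper's own proof: exploit the point symmetry $\mathbf{x}\mapsto-\mathbf{x}$ of the two-ball configuration, substitute $\nu_{B_1}(\mathbf{x})=-\nu_{B_2}(\tilde{\mathbf{x}})$ and $\nabla_{\mathbf{x}}=-\nabla_{\tilde{\mathbf{x}}}$, change variables $\mathbf{y}\mapsto\tilde{\mathbf{y}}=-\mathbf{y}$, and use the evenness of the radial kernel $\Gamma^k$ together with the invariance of the surface measure; your sign bookkeeping for the three identities matches the paper's. The only cosmetic discrepancy is that in the single-ball identity you (like the paper's own proof) start from the integral over $\partial B_2$ and land on $\partial B_1$, whereas the displayed statement has the domains the other way around, but the two versions are interchangeable by the same argument.
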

\begin{rem}
	There are
	%\begin{equation}\label{nuA1}
	%\nu_{B_2}(x)\cdot\nabla \times  \mathcal{A}_{B_1}^{i} [\phi](x)=\nu_{B_1}(\tilde{x} )\cdot\nabla \times \mathcal{A}_{B_2}^{k} [\tilde{\phi}](\tilde{x} ).
	%\end{equation}
	\begin{equation}\label{nuA2}
	\begin{aligned}
	\nu_{B_j}(\mathbf{x})\cdot\nabla \times  \acute{\mathcal{A}} _{B_1\cup B_2}^{i} [\phi](\mathbf{x})=\nu_{B_{j}}(\tilde{\mathbf{x}} )\cdot\nabla \times \acute{\mathcal{A}} _{B_1\cup B_2}^{i}  [\tilde{ \phi}    ] (\tilde{\mathbf{x}} ),  i=0,1,2,\cdots, j=1,2.
	\end{aligned}
	\end{equation}
	\begin{equation}\label{nukerA}
	\begin{aligned}
	\nu_{B_1}(\mathbf{x})\times  \acute{\mathcal{A}} _{B_1\cup B_2}^{i} [\phi](\mathbf{x})=-\nu_{B_{2}}(\tilde{\mathbf{x}} )\times \acute{\mathcal{A}} _{B_1\cup B_2}^{i}  [\tilde{ \phi}    ] (\tilde{\mathbf{x}} ), \quad i=0,1,2,\cdots.
	\end{aligned}
	\end{equation}
%	\begin{equation}\label{nucdotA}
	%\nu_{B_1}(\mathbf{x})\cdot \acute{\mathcal{A}}_{B_1}^{i} [\phi](\mathbf{x})=-\nu_{B_2}(\tilde{\mathbf{x}} )\cdot \acute{\mathcal{A}}_{B_2}^{i} \left [\tilde{ \phi}   \right ] (\tilde{\mathbf{x}} ), \quad i=0,1,2,\cdots.
	%\end{equation}
	\begin{equation}\label{nucdotA2}
	\nu_{B_1}(\mathbf{x})\cdot \acute{\mathcal{A}}_{B_2}^{i} [\phi](\mathbf{x})=-\nu_{B_2}(\tilde{\mathbf{x}} )\cdot \acute{\mathcal{A}}_{B_1}^{i}  [\tilde{ \phi}    ] (\tilde{\mathbf{x}} ), \quad i=0,1,2,\cdots.
	\end{equation}
\end{rem}

\begin{prop}
	For  $\mathbf{x} \in \partial B_1$, $\phi \in L^2_T(\partial D)$, $\varphi \in L^2(\partial D)$, we have
	\begin{equation}\label{nuSi}
	\nu_{B_1}(\mathbf{x})\cdot\nabla  \acute{{S}} _{B_2}^{i} [\varphi](\mathbf{x})=\nu_{B_2}(\tilde{\mathbf{x}} )\cdot\nabla \acute{{S}} _{B_1}^{i}  [\tilde{ \varphi}   ] (\tilde{\mathbf{x}} ), \quad i=0,1,2,\cdots.
	\end{equation}
	\begin{equation}\label{nuSi2}
	\nu_{B_1}(\mathbf{x})\cdot\nabla  \acute{{S}} _{B_1}^{i} [\varphi](\mathbf{x})=\nu_{B_2}(\tilde{\mathbf{x}} )\cdot\nabla \acute{{S}} _{B_2}^{i}  [\tilde{ \varphi}   ] (\tilde{\mathbf{x}} ), \quad i=0,1,2,\cdots.
	\end{equation}
	\begin{equation}\label{nuKi}
	\left ( {K}_{B_1}^{i}  \right ) ^*\left [ \varphi  \right ] \left ( \mathbf{x} \right ) =
	\left ( {K}_{B_2}^{i}  \right ) ^* [\tilde{ \varphi}    ] \left ( \tilde{\mathbf{x}}  \right ), \quad i=0,1,2,\cdots.
	\end{equation}
	\begin{equation}\label{nuMi}
	\acute{\mathcal{M}} _{B_1}^{i} \left [ \phi  \right ] \left ( \mathbf{x} \right ) =
	\acute{\mathcal{M}} _{B_2}^{i}  [\tilde{ \phi}    ] \left ( \tilde{\mathbf{x}}  \right ), \quad i=0,1,2,\cdots.
	\end{equation}
	\begin{proof}
		The proof is similar to that in Proposition \ref{nunu}.
	\end{proof}

\end{prop}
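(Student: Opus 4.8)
The plan is to exploit the reflection symmetry of the two spheres through the origin, under which $B_2=-B_1$, together with the evenness of the expansion kernels. The three ingredients I would set up first are: the antipodal normal relation $\nu_{B_1}(\mathbf{x})=-\nu_{B_2}(\tilde{\mathbf{x}})$ with $\tilde{\mathbf{x}}=-\mathbf{x}$ (already noted in the first proposition of this subsection); the chain rule $\nabla_{\mathbf{x}}=-\nabla_{\tilde{\mathbf{x}}}$ accompanying the substitution $\mathbf{x}=-\tilde{\mathbf{x}}$; and the fact that each coefficient $\Gamma_i$ in the low-frequency expansion of $\Gamma^k$ is a radial function, hence even, so that $\Gamma_i(-\mathbf{z})=\Gamma_i(\mathbf{z})$. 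The only analytic step beyond these is the change of variables $\mathbf{y}\mapsto\tilde{\mathbf{y}}=-\mathbf{y}$ in each surface integral, which maps $\partial B_1$ onto $\partial B_2$, preserves the surface measure $d\sigma$, and by definition turns $\varphi(\mathbf{y})$ into $\tilde{\varphi}(\tilde{\mathbf{y}})$ and $\phi(\mathbf{y})$ into $\tilde{\phi}(\tilde{\mathbf{y}})$.

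For \eqref{nuSi} I would begin from $\nabla_{\mathbf{x}}\acute{{S}}_{B_2}^{i}[\varphi](\mathbf{x})=\int_{\partial B_2}\nabla_{\mathbf{x}}\Gamma_i(\mathbf{x}-\mathbf{y})\varphi(\mathbf{y})\,d\sigma(\mathbf{y})$, substitute $\mathbf{x}=-\tilde{\mathbf{x}}$ and $\mathbf{y}=-\tilde{\mathbf{y}}$ so that $\tilde{\mathbf{y}}\in\partial B_1$, and use $\nabla_{\mathbf{x}}=-\nabla_{\tilde{\mathbf{x}}}$ together with $\Gamma_i(-\tilde{\mathbf{x}}+\tilde{\mathbf{y}})=\Gamma_i(\tilde{\mathbf{x}}-\tilde{\mathbf{y}})$ to obtain $\nabla_{\mathbf{x}}\acute{{S}}_{B_2}^{i}[\varphi](\mathbf{x})=-\nabla_{\tilde{\mathbf{x}}}\acute{{S}}_{B_1}^{i}[\tilde{\varphi}](\tilde{\mathbf{x}})$. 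Dotting with $\nu_{B_1}(\mathbf{x})=-\nu_{B_2}(\tilde{\mathbf{x}})$ introduces a second sign; the two cancel and give the stated identity with a plus sign. Identity \eqref{nuSi2} is the identical computation with the source now on $\partial B_1$ (so that $\tilde{\mathbf{y}}\in\partial B_2$), simply interchanging the roles of the two spheres.

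The remaining two identities follow by the same mechanism. For \eqref{nuKi} I would write $(K_{B_1}^{i})^{*}[\varphi](\mathbf{x})=\int_{\partial B_1}\big(\nu_{B_1}(\mathbf{x})\cdot\nabla_{\mathbf{x}}\big)\Gamma_i(\mathbf{x}-\mathbf{y})\,\varphi(\mathbf{y})\,d\sigma(\mathbf{y})$ and observe that under the substitution the differential operator $\nu_{B_1}(\mathbf{x})\cdot\nabla_{\mathbf{x}}$ becomes $\nu_{B_2}(\tilde{\mathbf{x}})\cdot\nabla_{\tilde{\mathbf{x}}}$ (two cancelling signs), which is precisely the kernel of $(K_{B_2}^{i})^{*}$ at $\tilde{\mathbf{x}}$. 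For \eqref{nuMi}, starting from $\acute{\mathcal{M}}_{B_1}^{i}[\phi](\mathbf{x})=\nu_{B_1}(\mathbf{x})\times\nabla_{\mathbf{x}}\times\acute{\mathcal{A}}_{B_1}^{i}[\phi](\mathbf{x})$, the same reflection turns the interior factor into $-\nabla_{\tilde{\mathbf{x}}}\times\acute{\mathcal{A}}_{B_2}^{i}[\tilde{\phi}](\tilde{\mathbf{x}})$ and the prefactor $\nu_{B_1}$ into $-\nu_{B_2}$, so the two signs again cancel and the cross product reproduces $\acute{\mathcal{M}}_{B_2}^{i}[\tilde{\phi}](\tilde{\mathbf{x}})$.

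All of these computations are routine; the only point requiring genuine care — and the reason the present identities carry a plus sign whereas some in the preceding results carry a minus — is the sign bookkeeping. Each reflection contributes one sign from $\nu_{B_1}=-\nu_{B_2}$ and one further sign from every differential operator ($\nabla$ or $\nabla\times$ each flips sign once), while the even kernels and the measure-preserving substitution contribute none. Every identity here pairs exactly one normal with exactly one derivative, giving an even number of sign flips and hence a plus sign; by contrast, the purely tangential identity \eqref{nukerA}, which involves a single normal and no derivative, has an odd count and therefore a minus sign. I thus expect no substantive obstacle, only the need to track these signs consistently across the four cases.
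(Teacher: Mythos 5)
Your proposal is correct and follows essentially the same route as the paper, which simply refers back to the reflection/change-of-variables argument of the preceding proposition: substitute $\mathbf{y}\mapsto-\mathbf{y}$, use the evenness of the radial kernels $\Gamma_i$, the relation $\nu_{B_1}(\mathbf{x})=-\nu_{B_2}(\tilde{\mathbf{x}})$, and the sign flip of each differential operator. Your closing observation on the parity of the sign count (one normal plus one derivative gives a plus, a lone normal gives a minus) correctly explains the sign discrepancy with the earlier tangential identities.
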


\begin{lem}
	For any $\mathbf{x} \in \partial B_1$, let $\tilde{\mathbf{x}}=-\mathbf{x} \in \partial B_2$, we have
	\begin{equation}\label{nusur1}
	\nabla _{\partial B_{1}}\cdot \psi^0(\mathbf{x})=-\nabla _{\partial B_{2}}\cdot \psi^0(\tilde{\mathbf{x}} ).
	\end{equation}
	\begin{proof}
	For $\mathbf{x} \in \partial B_1$,
\eqref{boundary_e_0} follows that
\begin{equation}\label{nue0}
\nu_{B_1}(\mathbf{x})\cdot \bE_0(\mathbf{x})|_{-}=\nu_{B_2}(\tilde{\mathbf{x}} )\cdot \bE_0(\tilde{\mathbf{x}})|_{-}=0.
\end{equation}
According to the expression of $\bE_0$ provided in \eqref{represent_e0}, we obtained the following expression:
\begin{equation}\label{nue01}
\begin{aligned}
\nu_{B_1}(\mathbf{x})\cdot \bE_0(\mathbf{x})|_{-}=&\nu_{B_1}(\mathbf{x})\cdot\nabla \times  \acute{\mathcal{A}}_{B_1}^{0} [\phi^0](\mathbf{x})+\nu_{B_1}(\mathbf{x})\cdot\nabla \times  \acute{\mathcal{A}}_{B_2}^{0} [\phi^0](\mathbf{x})\\
&+\nu_{B_1}(\mathbf{x})\cdot\nabla  \acute{{S}} _{B_2}^{0} [\g_{\dr {B_2}}\cdot [\psibf^0] ](\mathbf{x})+\left (- \frac{{I} }{2}+\left ( {K}_{B_1}^0  \right ) ^*\right ) \left [ \g_{\dr {B_1}}\cdot [\psibf^0] \right ],
\end{aligned}
\end{equation}
\begin{equation}\label{nue02}
\begin{aligned}
\nu_{B_2}(\tilde{\mathbf{x}} )\cdot \bE_0(\tilde{\mathbf{x}} )|_{-}=&
\nu_{B_2}(\tilde{\mathbf{x}} )\cdot\nabla \times  \acute{\mathcal{A}}_{B_1}^{0} [\phi^0](\tilde{\mathbf{x}} )
+\nu_{B_2}(\tilde{\mathbf{x}} )\cdot\nabla \times  \acute{\mathcal{A}}_{B_2}^{0} [\phi^0](\tilde{\mathbf{x}} )\\
&+\nu_{B_2}(\tilde{\mathbf{x}} )\cdot\nabla  {S}_{B_1}^{0} [\g_{\dr {B_1}}\cdot [\psibf^0]](\tilde{\mathbf{x}} )
+\left ( -\frac{{I} }{2}
+\left ( {K}_{B_2}^0  \right ) ^*\right ) \left [ \g_{\dr {B_2}}\cdot [\psibf^0] \right ].
\end{aligned}
\end{equation}
We will analyze each item in \eqref{nue01} and \eqref{nue02} separately. By \eqref{phi1} and \eqref{nueoin}, we have
\begin{equation}\label{nvphi1}
\begin{aligned}
\phi^0(\mathbf{x})=\nu_{B_1}(\mathbf{x})\times  \bE_0^{i}(\mathbf{x})=\nu_{B_1}(\mathbf{x})\times  \mathbf{p}=-\nu_{B_2}(\tilde{\mathbf{x}} )\times  \mathbf{p}=-\phi^0(\tilde{\mathbf{x}} ).
\end{aligned}
\end{equation}
On one hand, substituting \eqref{nvphi1},\eqref{nuA2} into \eqref{nue0}, we get the result
\begin{equation}\label{key1}
\begin{aligned}
&\nu_{B_1}(\mathbf{x})\cdot\nabla  \acute{{S}} _{B_2}^{0} [\g_{\dr {B_2}}\cdot [\psibf^0] ](\mathbf{x})+\left ( -\frac{{I} }{2}+\left ( {K}_{B_1}^0  \right ) ^*\right ) \left [ \g_{\dr {B_1}}\cdot [\psibf^0] \right ](\mathbf{x})\\
&=-\nu_{B_2}(\tilde{\mathbf{x}} )\cdot\nabla  \acute{{S}} _{B_1}^{0} [\g_{\dr {B_1}}\cdot [\psibf^0]](\tilde{\mathbf{x}} )
-\left ( -\frac{{I} }{2}
+\left ( {K}_{B_2}^0  \right ) ^*\right ) \left [ \g_{\dr {B_2}}\cdot [\psibf^0] \right ](\tilde{\mathbf{x}} ).
\end{aligned}
\end{equation}
On the other hand, by stright computing with \eqref{nuSi} and \eqref{nuKi}, there is
\begin{equation}\label{key2}
\begin{aligned}
&\nu_{B_1}(\mathbf{x})\cdot\nabla  \acute{{S}} _{B_2}^{0} [\g_{\dr {B_2}}\cdot [\psibf^0] ](\mathbf{x})+\left (- \frac{{I} }{2}+\left ( {K}_{B_1}^0  \right ) ^*\right ) \left [ \g_{\dr {B_1}}\cdot [\psibf^0] \right ](\mathbf{x})\\
&=\nu_{B_2}(\tilde{\mathbf{x}} )\cdot\nabla  \acute{{S}} _{B_1}^{0} [\widetilde{\g_{\dr {B_1}}\cdot [\psibf^0]} ](\tilde{\mathbf{x}} )
+\left (- \frac{{I} }{2}
+\left ( {K}_{B_2}^0  \right ) ^*\right ) \left [ \widetilde{\g_{\dr {B_2}}\cdot [\psibf^0]}  \right ](\tilde{\mathbf{x}} ).
\end{aligned}
\end{equation}
By comparing \eqref{key1} and \eqref{key2}, we obtain 
\begin{equation*}%\label{key3}
{\g_{\dr {B_1}}\cdot [\psibf^0]}(\mathbf{x})=\widetilde{\g_{\dr {B_2}}\cdot [\psibf^0]}(\tilde{\mathbf{x}}) =-{\g_{\dr {B_2}}\cdot [\psibf^0]}(\tilde{\mathbf{x}} ),
\end{equation*}
which concludes the proof.
\end{proof}
	
\end{lem}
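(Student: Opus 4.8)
The plan is to read off the claimed antisymmetry of $\nabla_{\partial D}\cdot\psi^0$ from the vanishing interior normal trace of $\bE_0$, exploiting the reflection $\mathbf{x}\mapsto\tilde{\mathbf{x}}=-\mathbf{x}$ that interchanges the two spheres. The conceptual driver is that the configuration is symmetric under this reflection while the leading incident field $\bE_0^{i}=\mathbf{p}$ is odd, so every scalar density attached to $\bE_0$ should inherit a definite parity. Concretely, I would start from the interior boundary condition \eqref{boundary_e_0}, which gives $\nu_{B_1}(\mathbf{x})\cdot\bE_0(\mathbf{x})|_-=\nu_{B_2}(\tilde{\mathbf{x}})\cdot\bE_0(\tilde{\mathbf{x}})|_-=0$ for $\mathbf{x}\in\partial B_1$.

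First I would expand each of these two normal traces using the interior representation \eqref{represent_e0}, splitting the operators over the two components as $\acute{\mathcal{A}}_D^0=\acute{\mathcal{A}}_{B_1}^0+\acute{\mathcal{A}}_{B_2}^0$ and $\acute{S}_D^0=\acute{S}_{B_1}^0+\acute{S}_{B_2}^0$. The normal derivative of the self-interacting single-layer term carries the jump from Proposition \ref{propjumpS}, producing the Neumann--Poincar\'e contribution $\left(-\tfrac12 I+(K_{B_j}^0)^*\right)[\nabla_{\partial B_j}\cdot\psi^0]$ on $\partial B_j$, while the cross term from the other sphere remains a smooth integral of $\nu\cdot\nabla\acute{S}^0$ type.

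Next I would eliminate the $\nabla\times\acute{\mathcal{A}}_D^0[\phi^0]$ contributions. From \eqref{phi1} together with \eqref{nueoin} one gets $\phi^0(\mathbf{x})=\nu_{B_1}(\mathbf{x})\times\mathbf{p}=-\nu_{B_2}(\tilde{\mathbf{x}})\times\mathbf{p}=-\phi^0(\tilde{\mathbf{x}})$, so the reflected density equals $-\phi^0$; substituting this into the operator symmetry \eqref{nuA2} (and using $\nu_{B_1}(\mathbf{x})=-\nu_{B_2}(\tilde{\mathbf{x}})$) shows the $\acute{\mathcal{A}}$-contribution at $\mathbf{x}$ is the negative of the one at $\tilde{\mathbf{x}}$. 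Equating the two vanishing traces then leaves an identity in which the single-layer plus Neumann--Poincar\'e part of $g:=\nabla_{\partial D}\cdot\psi^0$ evaluated at $\mathbf{x}$ equals minus the same combination evaluated at $\tilde{\mathbf{x}}$, with the original densities.

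Finally I would compute that surviving left-hand side a second way, now transporting the $\partial B_1$-evaluation onto $\partial B_2$ via the transfer symmetries \eqref{nuSi} for $\nu\cdot\nabla\acute{S}^0$ and \eqref{nuKi} for $(K^0)^*$; this re-expresses the same quantity in terms of the reflected density $\tilde g$. Comparing the two expressions for the identical left-hand side produces a homogeneous integral equation on $\partial B_2$ for the reflection-symmetric combination of the surface divergences, whose principal part is the interior operator $-\tfrac12 I+(K_{B_2}^0)^*$. Because a surface divergence integrates to zero over each closed sphere, the relevant density has vanishing mean, and on mean-zero densities this operator is invertible (its $L^2$-kernel consisting only of constants); this forces the symmetric combination to vanish, i.e.\ $\nabla_{\partial B_1}\cdot\psi^0(\mathbf{x})=-\nabla_{\partial B_2}\cdot\psi^0(\tilde{\mathbf{x}})$. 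I expect the concluding inversion to be the main obstacle, together with the meticulous bookkeeping of the reflection and the normals, since a single sign slip in any transfer identity would destroy both the cancellation of the $\acute{\mathcal{A}}$-contributions and the final uniqueness step.
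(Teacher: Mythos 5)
Your proposal follows essentially the same route as the paper: expand both vanishing interior normal traces $\left(\nu\cdot\bE_0\right)^-$ via \eqref{represent_e0}, cancel the $\nabla\times\acute{\mathcal{A}}^0$ contributions using $\phi^0(\mathbf{x})=-\phi^0(\tilde{\mathbf{x}})$ and \eqref{nuA2}, and compare the resulting identity with the one obtained by transporting the same expression through \eqref{nuSi} and \eqref{nuKi}, which is exactly the paper's pair \eqref{key1}--\eqref{key2}. Your concluding injectivity argument for the Neumann--Poincar\'e-type operator on per-component mean-zero densities is a correct elaboration of a step the paper leaves implicit (modulo the minor slip that constants span the kernel of $-\tfrac12 I+K_D^0$ rather than of its adjoint), so the proof is sound and matches the paper's.
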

\begin{lem}
	For any $\mathbf{x} \in \partial B_1$, let $\tilde{\mathbf{x}}=-\mathbf{x} \in \partial B_2$, we have
	\begin{equation}\label{nue0d}
	(\nu_{B_1}(\mathbf{x})\cdot \bE_0(\mathbf{x}))^+=-(\nu_{B_2}(\tilde{\mathbf{x}} )\cdot \bE_0(\tilde{\mathbf{x}} ))^+.
	\end{equation}
	\begin{proof}
		For $\mathbf{x} \in \partial B_1$, based on the previous analysis, we can derive that
		\begin{equation*}%\label{nue03}
		\begin{aligned}
		&(\nu_{B_1}(\mathbf{x})\cdot \bE_0(\mathbf{x}))^{+}\\
		\nm
		=&\nu_{B_1}(\mathbf{x})\cdot\nabla \times  \acute{\mathcal{A}}_{B_1}^{0} [\phi^0](\mathbf{x})+\nu_{B_1}(\mathbf{x})\cdot\nabla \times  \acute{\mathcal{A}}_{B_2}^{0} [\phi^0](\mathbf{x})\\
			\nm
		&+\nu_{B_1}(\mathbf{x})\cdot\nabla  \acute{{S}} _{B_2}^{0} [\g_{\dr {B_2}}\cdot [\psibf^0] ](\mathbf{x})+\left ( \frac{{I} }{2}+\left ( {K}_{B_1}^0  \right ) ^*\right ) \left [ \g_{\dr {B_1}}\cdot [\psibf^0] \right ]\\
		=&-	\nu_{B_2}(\tilde{\mathbf{x}} )\cdot\nabla \times  \acute{\mathcal{A}}_{B_1}^{0} [\phi^0](\tilde{\mathbf{x}} )
		-\nu_{B_2}(\tilde{\mathbf{x}} )\cdot\nabla \times  \acute{\mathcal{A}}_{B_2}^{0} [\phi^0](\tilde{\mathbf{x}} )\\
			\nm
		&-\nu_{B_2}(\tilde{\mathbf{x}} )\cdot\nabla  \acute{{S}} _{B_1}^{0} [\g_{\dr {B_1}}\cdot [\psibf^0]](\tilde{\mathbf{x}} )
		-\left ( \frac{{I} }{2}
		+\left ( {K}_{B_2}^0  \right ) ^*\right ) \left [ \g_{\dr {B_2}}\cdot [\psibf^0] \right ]\\
			\nm
		=&-(\nu_{B_2}(\tilde{\mathbf{x}} )\cdot \bE_0(\tilde{\mathbf{x}} ))^+.
		\end{aligned}
		\end{equation*}
		
	\end{proof}
\end{lem}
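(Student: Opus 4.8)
The plan is to establish \eqref{nue0d} as the exterior analogue of the preceding lemma, in which the interior normal traces were computed and shown to vanish. I would start from the exterior representation \eqref{represent_e0},
\[
\bE_0(\mathbf{x})=\bE^{i}_0(\mathbf{x})+\nabla\times\acute{\mathcal{A}}_{D}^{0}[\phi^0](\mathbf{x})+\nabla\acute{S}_{D}^{0}[\g_{\dr D}\cdot\psi^0](\mathbf{x}),\qquad \mathbf{x}\in\mathbb{R}^3\setminus\overline{D},
\]
take the normal component $\nu_{B_1}(\mathbf{x})\cdot(\,\cdot\,)$ from outside, and split each layer potential over $D=B_1\cup B_2$ into its $B_1$- and $B_2$-parts. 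Across $\partial B_1$ the curl term $\nu\cdot\nabla\times\acute{\mathcal{A}}$ and the $B_2$-supported single layer are smooth, whereas the normal derivative of the $B_1$-supported single layer jumps; by the exterior jump relation of Proposition \ref{propjumpS} the latter contributes $\big(\tfrac12 I+(K^0_{B_1})^*\big)[\g_{\dr B_1}\cdot\psi^0]$. This yields an explicit expression for $(\nu_{B_1}(\mathbf{x})\cdot\bE_0(\mathbf{x}))^+$ that coincides with the interior formula \eqref{nue01} except that the self-interaction carries $+\tfrac12 I$ instead of $-\tfrac12 I$ (together with the incident term, which is antisymmetric by \eqref{nueoin}).

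Next I would transform each term under the reflection $\mathbf{x}\mapsto\tilde{\mathbf{x}}=-\mathbf{x}$, which interchanges the two balls and obeys $\nu_{B_1}(\mathbf{x})=-\nu_{B_2}(\tilde{\mathbf{x}})$. Term by term, the incident contribution flips sign by \eqref{nueoin}; the two curl terms transform by \eqref{nuA2}; the cross single-layer term on $B_2$ is matched to the one on $B_1$ at $\tilde{\mathbf{x}}$ via \eqref{nuSi}; and the Neumann--Poincar\'e operator transforms by \eqref{nuKi}. The remaining ingredient is the antisymmetry of the surface divergence of $\psi^0$ from \eqref{nusur1}, namely $\g_{\dr B_1}\cdot\psi^0(\mathbf{x})=-\g_{\dr B_2}\cdot\psi^0(\tilde{\mathbf{x}})$, which turns each density $\g_{\dr B_1}\cdot\psi^0$ into $-\g_{\dr B_2}\cdot\psi^0$ after reflection and thus supplies the uniform minus sign. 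Collecting the transformed terms reproduces precisely the exterior normal trace of $\bE_0$ on $\partial B_2$ at $\tilde{\mathbf{x}}$ multiplied by $-1$, i.e. $-(\nu_{B_2}(\tilde{\mathbf{x}})\cdot\bE_0(\tilde{\mathbf{x}}))^+$, which is the desired identity.

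The main obstacle I expect is sign bookkeeping rather than genuine analysis. One must track simultaneously the sign from $\nu_{B_1}(\mathbf{x})=-\nu_{B_2}(\tilde{\mathbf{x}})$, the sign generated by the change of variables $\mathbf{y}\mapsto-\mathbf{y}$ inside each integral, and---most \emph{delicately}---the sign of the jump term. In particular I must verify that the $+\tfrac12 I$ appearing in the exterior trace at $\partial B_1$ is consistent, after reflection, with the $+\tfrac12 I$ of the exterior trace at $\partial B_2$, so that the identity part and the Neumann--Poincar\'e part are converted with the same overall sign. This is exactly where \eqref{nusur1} is indispensable: it guarantees that the self-interaction jump term and the cross terms transform compatibly, allowing the entire expression to factor as $-(\nu_{B_2}(\tilde{\mathbf{x}})\cdot\bE_0(\tilde{\mathbf{x}}))^+$. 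Since every required symmetry identity has already been proved in the propositions above, once the signs are aligned the claim follows by direct comparison.
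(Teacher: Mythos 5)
Your proposal is correct and follows essentially the same route as the paper: write the exterior normal trace of $\bE_0$ via the jump relation (picking up $+\tfrac{I}{2}$ rather than $-\tfrac{I}{2}$), then apply the reflection identities \eqref{nueoin}, \eqref{nuA2}, \eqref{nuSi}, \eqref{nuKi} term by term together with the antisymmetries $\phi^0(\mathbf{x})=-\phi^0(\tilde{\mathbf{x}})$ and \eqref{nusur1}, which supply the uniform minus sign. Your handling of the incident term (which the paper's displayed computation silently omits but which is antisymmetric by \eqref{nueoin}) is in fact slightly more careful than the paper's.
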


\begin{lem}
	For any $\mathbf{x} \in \partial B_1$, $\tilde{\mathbf{x}}=-\mathbf{x} \in \partial B_2$, we have
	\begin{equation}\label{e1e1}
	(\nu_{B_1}(\mathbf{x})\cdot \bE_1(\mathbf{x}))^-=-(\nu_{B_2}(\tilde{\mathbf{x}} )\cdot \bE_1(\tilde{\mathbf{x}} ))^-.
	\end{equation}
	\begin{proof}
		By \eqref{boundary_E}, for all $\mathbf{x} \in \partial D$, there is
		\begin{equation}\label{e0e1}
			(\nu(\mathbf{x})\cdot \bE_0(\mathbf{x}))^+=-\tilde{C} (\nu(\mathbf{x})\cdot \bE_1(\tilde{\mathbf{x}} ))^-.
		\end{equation}
 For $\mathbf{x} \in \partial B_1$, combining \eqref{e0e1} and \eqref{nue0d}, we get the desired result.
		
	\end{proof}
\end{lem}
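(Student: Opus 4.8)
The plan is to obtain \eqref{e1e1} by combining two ingredients already at our disposal: the transmission condition for the full field $\bE$ (Lemma \ref{lemma1}), which ties the exterior normal trace of $\bE_0$ to the interior normal trace of $\bE_1$, and the reflection antisymmetry of the leading-order term recorded in \eqref{nue0d}. The entire argument is algebraic once the correct power-of-$k$ balance is extracted from the transmission relation.

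First I would read off a pointwise identity from \eqref{boundary_E}. Inserting the low-frequency expansion $\bE=\bE_0+k\bE_1+O(k^2)$ into the transmission condition \eqref{boundary} and using $\varepsilon=1$ in $\mathbb{R}^3\setminus\overline{D}$ together with $\varepsilon_c=\tilde{C}/\omega=\tilde{C}/k$ in $D$, the interior side becomes
\begin{equation*}
\big(\nu\cdot\varepsilon_c\,\bE\big)^- = \frac{\tilde{C}}{k}\big(\nu\cdot\bE_0\big)^- + \tilde{C}\big(\nu\cdot\bE_1\big)^- + O(k),
\end{equation*}
while the exterior side is $(\nu\cdot\bE_0)^+ + k(\nu\cdot\bE_1)^+ + O(k^2)$. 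The coefficient of $k^{-1}$ forces $(\nu\cdot\bE_0)^-=0$, which is precisely \eqref{boundary_e_0}, so the singular term drops out; matching the $k^0$ coefficient then yields, for every $\mathbf{x}\in\partial D$,
\begin{equation*}
\big(\nu(\mathbf{x})\cdot\bE_0(\mathbf{x})\big)^+ = \tilde{C}\,\big(\nu(\mathbf{x})\cdot\bE_1(\mathbf{x})\big)^-,
\end{equation*}
which is the content of \eqref{e0e1}.

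Next I would evaluate this identity at the two reflection-related points. For $\mathbf{x}\in\partial B_1$ it gives $(\nu_{B_1}(\mathbf{x})\cdot\bE_0(\mathbf{x}))^+=\tilde{C}(\nu_{B_1}(\mathbf{x})\cdot\bE_1(\mathbf{x}))^-$, and for the reflected point $\tilde{\mathbf{x}}=-\mathbf{x}\in\partial B_2$ it gives $(\nu_{B_2}(\tilde{\mathbf{x}})\cdot\bE_0(\tilde{\mathbf{x}}))^+=\tilde{C}(\nu_{B_2}(\tilde{\mathbf{x}})\cdot\bE_1(\tilde{\mathbf{x}}))^-$. Substituting these two expressions into the antisymmetry relation \eqref{nue0d}, namely $(\nu_{B_1}(\mathbf{x})\cdot\bE_0(\mathbf{x}))^+=-(\nu_{B_2}(\tilde{\mathbf{x}})\cdot\bE_0(\tilde{\mathbf{x}}))^+$, and cancelling the common nonzero factor $\tilde{C}$ produces exactly \eqref{e1e1}.

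Since no estimate or compactness is involved, there is no genuine analytic obstacle; the only step requiring care is the bookkeeping of orders in the transmission condition, where $\varepsilon_c$ is of size $k^{-1}$. One must check that the $k^{-1}$ contribution truly vanishes through \eqref{boundary_e_0} before reading off the $k^0$ balance, and keep precise track of the interior/exterior ($\pm$) traces and of the sign $\nu_{B_1}(\mathbf{x})=-\nu_{B_2}(\tilde{\mathbf{x}})$ underlying \eqref{nue0d}. With these conventions fixed, the identity \eqref{e1e1} follows immediately.
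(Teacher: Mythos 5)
Your proof follows the paper's argument exactly: extract the order-$k^{0}$ balance from the transmission condition \eqref{boundary_E} (after the $k^{-1}$ term is killed by \eqref{boundary_e_0}) to obtain \eqref{e0e1}, then combine it with the reflection antisymmetry \eqref{nue0d} and cancel the constant $\tilde{C}$. The only discrepancy is the sign of $\tilde{C}$ in \eqref{e0e1} --- your computation gives $+\tilde{C}$ where the paper writes $-\tilde{C}$ --- but since the same constant multiplies both sides when substituted into \eqref{nue0d}, this is immaterial and the conclusion \eqref{e1e1} follows either way.
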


\begin{lem}\label{lempds1}
	For any $\mathbf{x} \in \partial B_1$, let $\tilde{\mathbf{x}}=-\mathbf{x} \in \partial B_2$, we have
	\begin{equation}\label{surphi1}
	\nabla _{\partial B_{1}}\cdot \psi^1(\mathbf{x})=\nabla _{\partial B_{2}}\cdot \psi^1(\tilde{\mathbf{x}} ).
	\end{equation}
	\begin{proof}
		By using the expression of $\bE_1$ given in \eqref{represent_e1}, we have
	\begin{equation}\label{nve1-1}
	\begin{aligned}
	&(\nu_{B_1}(\mathbf{x})\cdot \bE_1(\mathbf{x}))^-\\
	\nm
	=&\ds \nu_{B_1}(\mathbf{x})\cdot\nabla \times  \acute{\mathcal{A}}_{B_1\cup B_2}^{0} [\phi^1](\mathbf{x})+\tilde{C} \nu_{B_1}(\mathbf{x})\cdot\nabla \times  \acute{\mathcal{A}}_{B_1\cup B_2}^{2} [\phi^0](\mathbf{x})\\
	\nm
	&\ds+\tilde{C} \nu_{B_1}(\mathbf{x})\cdot  \acute{\mathcal{A}}_{B_1}^{2} [\psi^0](\mathbf{x})+\tilde{C} \nu_{B_1}(\mathbf{x})\cdot  \acute{\mathcal{A}}_{ B_2}^{2} [\psi^0](\mathbf{x})\\
	&+\nu_{B_1}(\mathbf{x})\cdot\nabla  \acute{{S}} _{B_2}^{0} [\g_{\dr {B_2}}\cdot[\psibf^1] ](\mathbf{x})
	+\left ( -\frac{{I} }{2}+\left ( {K}_{B_1}^0  \right ) ^*\right ) \left [ \g_{\dr {B_1}}\cdot [\psibf^1] \right ](\mathbf{x})\\
	&+\tilde{C} \nu_{B_1}(\mathbf{x})\cdot\nabla  \acute{{S}} _{B_2}^{2} [\g_{\dr {B_2}}\cdot[\psibf^0] ](\mathbf{x})
	+\tilde{C} \left ( -\frac{{I} }{2}+\left ( {K}_{B_1}^0  \right ) ^*\right ) \left [ \g_{\dr {B_1}}\cdot [\psibf^0] \right ](\mathbf{x}).\\
	\end{aligned}
	\end{equation}
	and
		\begin{equation}\label{nve1-2}
	\begin{aligned}
	&(\nu_{B_2}(\tilde{\mathbf{x}})\cdot \bE_1(\tilde{\mathbf{x}}))^-\\
	\nm
	=&\nu_{B_2}(\tilde{\mathbf{x}})\cdot\nabla \times  \acute{\mathcal{A}}_{B_1\cup B_2}^{0} [\phi^1](\tilde{\mathbf{x}})+\tilde{C} \nu_{B_2}(\tilde{\mathbf{x}})\cdot\nabla \times  \acute{\mathcal{A}}_{B_1\cup B_2}^{2} [\phi^0](\tilde{\mathbf{x}})\\
	\nm
	&+\tilde{C} \nu_{B_2}(\tilde{\mathbf{x}})\cdot  \acute{\mathcal{A}}_{B_1}^{2} [\psi^0](\tilde{\mathbf{x}})+\tilde{C} \nu_{B_2}(\tilde{\mathbf{x}})\cdot  \acute{\mathcal{A}}_{ B_2}^{2} [\psi^0](\tilde{\mathbf{x}})\\
	\nm
	&+\nu_{B_2}(\tilde{\mathbf{x}})\cdot\nabla  \acute{{S}} _{B_1}^{0} [\g_{\dr {B_1}}\cdot[\psibf^1] ](\tilde{\mathbf{x}})
	+\left ( -\frac{{I} }{2}+\left ( {K}_{B_2}^0  \right ) ^*\right ) \left [ \g_{\dr {B_2}}\cdot [\psibf^1] \right ](\tilde{\mathbf{x}})\\
	&+\tilde{C} \nu_{B_2}(\tilde{\mathbf{x}})\cdot\nabla  \acute{{S}} _{B_1}^{2} [\g_{\dr {B_1}}\cdot[\psibf^0] ](\tilde{\mathbf{x}})
	+\tilde{C} \left ( -\frac{{I} }{2}+\left ( {K}_{B_2}^0  \right ) ^*\right ) \left [ \g_{\dr {B_2}}\cdot [\psibf^0] \right ](\tilde{\mathbf{x}}).\\
	\end{aligned}
	\end{equation}
 We will analyze each term in formula \eqref{nve1-1} and \eqref{nve1-2} individually.
Recall from \eqref{nusur1} that
\begin{equation}\label{nuphi1}
\nabla _{\partial B_{1}({\mathbf{x}} )}\cdot \psi^0(\mathbf{x}) =-\nabla _{\partial B_{2}(\tilde{\mathbf{x}} )}\cdot \psi^0(\tilde{\mathbf{x}} ).
\end{equation}
From another perspective, by direct calculation, we obtain
\begin{equation}\label{nuphi2}
\nabla _{\partial B_{1}({\mathbf{x}} )}\cdot \psi^0(\mathbf{x}) =-\nabla _{\partial B_{2}(\tilde{\mathbf{x}} )}\cdot\widetilde{ \psi^0} (\tilde{\mathbf{x}} ),
\end{equation}
then by comparing  \eqref{nuphi1} and \eqref{nuphi2}, we obtain that
\begin{equation}\label{nuphi}
\psi^0(\tilde{\mathbf{x}} ) =\widetilde{ \psi^0} (\tilde{\mathbf{x}} ).
\end{equation}
Furthermore,  by using \eqref{nucdotA2}, we have
\begin{equation}\label{e-11}
\begin{aligned}
&\nu_{B_1}(\mathbf{x})\cdot  \acute{\mathcal{A}}_{B_1}^{2} [\psi^0](\mathbf{x})
+\nu_{B_1}(\mathbf{x})\cdot  \acute{\mathcal{A}}_{ B_2}^{2} [\psi^0](\mathbf{x})\\
\nm
&=-\nu_{B_2}(\tilde{\mathbf{x}})\cdot  \acute{\mathcal{A}}_{B_1}^{2} [\psi^0](\tilde{\mathbf{x}})
-\nu_{B_2}(\tilde{\mathbf{x}})\cdot  \acute{\mathcal{A}}_{ B_2}^{2} [\psi^0](\tilde{\mathbf{x}}).
\end{aligned}
\end{equation}

Consider the other parts of \eqref{nve1-1} and \eqref{nve1-2}. By using  \eqref{phi3}, there is
\begin{equation*}%\label{kerAphi1}
\begin{aligned}
&\nu_{B_1}(\mathbf{x})\cdot\nabla \times  \acute{\mathcal{A}}_{B_1\cup B_2}^{0} [\phi^1](\mathbf{x}) \\
\nm
&=\ds\nu_{B_1}(\mathbf{x})\cdot\nabla \times  \acute{\mathcal{A}}_{B_1\cup B_2}^{0} [{\nu}\times \bE^{i}_1](\mathbf{x})-\tilde{C} \nu_{B_1}(\mathbf{x})\cdot\nabla \times  \acute{\mathcal{A}}_{B_1\cup B_2}^{0} [\acute{\mathcal{M}} _D^{2}\left [ \phibf^0 \right ]](\mathbf{x})\\
\nm
&\quad-\tilde{C} \nu_{B_1}(\mathbf{x})\cdot\nabla \times  \acute{\mathcal{A}}_{B_1\cup B_2}^{0}\left[\nu \times \acute{\mathcal{A}}_D ^0\left[\psi^0 \right]\right]\\
\nm
&\quad-\tilde{C} \nu_{B_1}(\mathbf{x})\cdot\nabla \times  \acute{\mathcal{A}}_{B_1\cup B_2}^{0} [\nu \times \g\acute{{S}} _D^2[\g_{\dr D} \cdot \psi^0]](\mathbf{x}).
\end{aligned}
\end{equation*}
Utilizing \eqref{kere1in} and \eqref{nuA2}, we obtain
\begin{equation}\label{e-12}
\nu_{B_1}(\mathbf{x})\cdot\nabla \times  \acute{\mathcal{A}}_{B_1\cup B_2}^{0} [{\nu}\times \bE^{i}_1](\mathbf{x})=\nu_{B_2}(\tilde{\mathbf{x}} )\cdot\nabla \times \acute{\mathcal{A}}_{B_1\cup B_2}^{0} [{\nu}\times \bE^{i}_1](\tilde{\mathbf{x}} ).
\end{equation}
By \eqref{nuMi}, \eqref{nvphi1} and \eqref{nuA2}, we have
\begin{equation}\label{kerAM}
\nu_{B_1}(\mathbf{x})\cdot\nabla \times  \acute{\mathcal{A}}_{B_1\cup B_2}^{0} [\acute{\mathcal{M}} _D^{2}\left [ \phibf^0 \right ]](\mathbf{x})= -\nu_{B_2}(\tilde{\mathbf{x}} )\cdot\nabla \times  \acute{\mathcal{A}}_{B_1\cup B_2}^{0} \left[\acute{\mathcal{M}} _D^{2}\left [ \phi^0 \right ]\right](\tilde{\mathbf{x}} ).
\end{equation}
By \eqref{nuphi} and \eqref{nukerA}, we have
\begin{equation}\label{key}
\nu_{B_1}(\mathbf{x})\cdot\nabla \times  \acute{\mathcal{A}}_{B_1\cup B_2}^{0}\left[\nu \times \acute{\mathcal{A}}_D ^0\left[\psi^0 \right]\right](\mathbf{x})
=-\nu_{B_2}(\tilde{\mathbf{x}} )\cdot\nabla \times  \acute{\mathcal{A}}_{B_1\cup B_2}^{0}\left[\nu \times \acute{\mathcal{A}}_D ^0\left[\psi^0 \right]\right](\tilde{\mathbf{x}} ),
\end{equation}
and by \eqref{nusur1}, \eqref{nuSi}, \eqref{nuSi2} and \eqref{nuA2}, we have
\begin{equation}\label{kerAGrS}
\nu_{B_1}(\mathbf{x})\cdot\nabla \times  \acute{\mathcal{A}}_{B_1\cup B_2}^{0} [\nu \times \g\acute{{S}} _D^2[\g_{\dr D} \cdot \psi^0]](\mathbf{x})
= -\nu_{B_2}(\tilde{\mathbf{x}} )\cdot\nabla \times  \acute{\mathcal{A}}_{B_1\cup B_2}^{0} [\nu \times \g\acute{{S}} _D^2[\g_{\dr D} \cdot \psi^0]](\tilde{\mathbf{x}} ).
\end{equation}
Similarly, by using \eqref{nuA2} and \eqref{nvphi1}, there is
\begin{equation}\label{e-13}
\nu_{B_1}(\mathbf{x})\cdot\nabla \times  \acute{\mathcal{A}}_{B_1\cup B_2}^{2} [\phi^0](\mathbf{x})=\nu_{B_2}(\tilde{\mathbf{x}} )\cdot\nabla \times \acute{\mathcal{A}}_{B_1\cup B_2}^{2} [{\phi^0}](\tilde{\mathbf{x}} ),
\end{equation}
and by \eqref{nuSi}, \eqref{nuKi} and \eqref{nusur1}, there is
\begin{equation}\label{e-14}
\nu_{B_1}(\mathbf{x})\cdot\nabla  \acute{{S}} _{B_2}^{2} [\g_{\dr {B_2}}\cdot[\psibf^0] ](\mathbf{x})=-\nu_{B_2}(\tilde{\mathbf{x}})\cdot\nabla  \acute{{S}} _{B_1}^{2} [\g_{\dr {B_1}}\cdot[\psibf^0] ](\tilde{\mathbf{x}}),
\end{equation}
\begin{equation}\label{e-15}
\left ( -\frac{{I} }{2}+\left ( {K}_{B_1}^0  \right ) ^*\right ) \left [ \g_{\dr {B_1}}\cdot [\psibf^0] \right ]=-\left ( -\frac{{I} }{2}+\left ( {K}_{B_2}^0  \right ) ^*\right ) \left [ \g_{\dr {B_2}}\cdot [\psibf^0] \right ](\tilde{\mathbf{x}}).
\end{equation}
Therefore, by \eqref{e1e1}, combining with \eqref{e-11}, \eqref{e-12},  \eqref{kerAM}, \eqref{key}, \eqref{kerAGrS}, \eqref{e-13}, \eqref{e-14}, \eqref{e-15}, we can obtain that
    \begin{equation*}%\label{a-s}
    \begin{aligned}
    &\nu_{B_1}(\mathbf{x})\cdot\nabla \times  \acute{\mathcal{A}}_{B_1\cup B_2}^{0} [{\nu}\times \bE^{i}_1](\mathbf{x})\\
    &+\nu_{B_1}(\mathbf{x})\cdot\nabla  \acute{{S}} _{B_2}^{0} [\g_{\dr {B_2}}\cdot[\psibf^1] ](\mathbf{x})
    +\left ( -\frac{{I} }{2}+\left ( {K}_{B_1}^0  \right ) ^*\right ) \left [ \g_{\dr {B_1}}\cdot [\psibf^1] \right ](\mathbf{x})\\
   =&-\nu_{B_2}(\tilde{\mathbf{x}})\cdot\nabla \times  \acute{\mathcal{A}}_{B_1\cup B_2}^{0} [{\nu}\times \bE^{i}_1](\tilde{\mathbf{x}})\\
   &-\nu_{B_2}(\tilde{\mathbf{x}})\cdot\nabla  \acute{{S}} _{B_1}^{0} [\g_{\dr {B_1}}\cdot[\psibf^1] ](\tilde{\mathbf{x}})
   -\left ( -\frac{{I} }{2}+\left ( {K}_{B_2}^0  \right ) ^*\right ) \left [ \g_{\dr {B_2}}\cdot [\psibf^1] \right ](\tilde{\mathbf{x}}),
    \end{aligned}
    \end{equation*}
    which is equivalent to
    \begin{equation*}%\label{a-s2}
    \nu(\mathbf{x})\cdot\nabla \times  \acute{\mathcal{A}}_{D}^{0} [{\nu}\times \bE^{i}_1](\mathbf{x})=\nu(\mathbf{x})\cdot\nabla  \acute{{S}} _{D}^{0} [\g_{\dr {D}}\cdot[\psibf^1] ](\mathbf{x})|_{-},\quad \mathbf{x}\in \dr D.
    \end{equation*}
    Therefore, for $\mathbf{x}\in \dr B_1$, there is
     \begin{equation}\label{a-s3}
    \begin{aligned}
    &\nu_{B_1}(\mathbf{x})\cdot\nabla  \acute{{S}} _{B_2}^{0} [\g_{\dr {B_2}}\cdot[\psibf^1] ](\mathbf{x})
    +\left ( -\frac{{I} }{2}+\left ( {K}_{B_1}^0  \right ) ^*\right ) \left [ \g_{\dr {B_1}}\cdot [\psibf^1] \right ](\mathbf{x})\\
    \nm
    &\ds=\nu_{B_1}(\mathbf{x})\cdot\nabla \times  \acute{\mathcal{A}}_{B_1\cup B_2}^{0} [{\nu}\times \bE^{i}_1](\mathbf{x})\\
    \nm
    &\ds=\nu_{B_2}(\tilde{\mathbf{x}})\cdot\nabla \times  \acute{\mathcal{A}}_{B_1\cup B_2}^{0} [{\nu}\times \bE^{i}_1](\tilde{\mathbf{x}})\\
    &=\nu_{B_2}(\tilde{\mathbf{x}})\cdot\nabla  \acute{{S}} _{B_1}^{0} [\g_{\dr {B_1}}\cdot[\psibf^1] ](\tilde{\mathbf{x}})
    +\left ( -\frac{{I} }{2}+\left ( {K}_{B_2}^0  \right ) ^*\right ) \left [ \g_{\dr {B_2}}\cdot [\psibf^1] \right ](\tilde{\mathbf{x}}).
    \end{aligned}
    \end{equation}
    On the other hand, by stright computing, there is
    \begin{equation}\label{a-s4}
    \begin{aligned}
    &\nu_{B_1}(\mathbf{x})\cdot\nabla  \acute{{S}} _{B_2}^{0} [\g_{\dr {B_2}}\cdot [\psibf^1] ](\mathbf{x})+\left (- \frac{{I} }{2}+\left ( {K}_{B_1}^0  \right ) ^*\right ) \left [ \g_{\dr {B_1}}\cdot [\psibf^1] \right ](\mathbf{x})\\
    &=\nu_{B_2}(\tilde{\mathbf{x}} )\cdot\nabla  \acute{{S}} _{B_1}^{0} [\widetilde{\g_{\dr {B_1}}\cdot [\psibf^1]} ](\tilde{\mathbf{x}} )
    +\left ( -\frac{{I} }{2}
    +\left ( {K}_{B_2}^0  \right ) ^*\right ) \left [ \widetilde{\g_{\dr {B_2}}\cdot [\psibf^1]}  \right ](\tilde{\mathbf{x}} ).
    \end{aligned}
    \end{equation}
    By comparing  \eqref{a-s3} and \eqref{a-s4}, we can write that
    \begin{equation*}%\label{a-s-final}
    {\g_{\dr {B_1}}\cdot [\psibf^1]}(\mathbf{x})=\widetilde{\g_{\dr {B_2}}\cdot [\psibf^1]}(\tilde{\mathbf{x}}) ={\g_{\dr {B_2}}\cdot [\psibf^1]}(\tilde{\mathbf{x}} ),
    \end{equation*}
     which concludes the proof.
\end{proof}
\end{lem}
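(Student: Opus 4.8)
The plan is to run, at first order, the same reflection argument that produced the zeroth-order relation \eqref{nusur1}, but starting now from the first-order transmission identity \eqref{e1e1}, namely $(\nu_{B_1}\cdot\bE_1)^- = -(\nu_{B_2}\cdot\bE_1)^-$. First I would substitute the interior representation \eqref{represent_e1} of $\bE_1$ into both sides of \eqref{e1e1}, so that the left-hand side becomes a sum of layer-potential terms over $\partial B_1$ evaluated at $\mathbf{x}$ and the right-hand side the same family over $\partial B_2$ evaluated at $\tilde{\mathbf{x}}$. The density $\phi^1$ is fed in through \eqref{phi3}, which expresses it as $\nu\times\bE_1^i$ minus corrections built purely from $\phi^0$ and $\psi^0$. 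The objective is to peel off every term generated by $\phi^0$ and $\psi^0$, show each obeys the minus-sign symmetry already carried by \eqref{e1e1}, and thereby cancel them, leaving an identity involving $\psi^1$ alone.

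As a preliminary I would sharpen the zeroth-order information: \eqref{nusur1} controls only $\nabla_{\partial B}\cdot\psi^0$, but a direct reflection change of variables $\mathbf{y}\mapsto-\mathbf{y}$ in the surface divergence upgrades this to the full even symmetry $\widetilde{\psi^0}=\psi^0$. With the parities $\phi^0$ odd (from \eqref{nvphi1}), $\psi^0$ even, and the incident symmetries \eqref{nueoin}, \eqref{kere1in} in hand, I would classify each term using the operator symmetry relations: \eqref{nuA2} makes the $\nu\cdot\nabla\times\acute{\mathcal{A}}^i$ pieces even, \eqref{nukerA} and \eqref{nucdotA2} make the $\nu\times\acute{\mathcal{A}}$ and $\nu\cdot\acute{\mathcal{A}}$ pieces odd, and \eqref{nuSi}, \eqref{nuSi2}, \eqref{nuKi}, \eqref{nuMi} fix the parities of the single-layer, Neumann--Poincar\'e and $\acute{\mathcal{M}}$ contributions. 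Combining these with the parities of $\phi^0,\psi^0$, every term built from $\acute{\mathcal{A}}^2[\phi^0]$, $\acute{\mathcal{A}}[\psi^0]$, $\acute{\mathcal{M}}_D^2[\phi^0]$ and $\acute{S}^2[\nabla_{\partial D}\cdot\psi^0]$ comes out odd, matching the minus sign in \eqref{e1e1}, so these cancel consistently on both spheres. The only even contribution among the data is the one driven by $\nu\times\bE_1^i$ (even by \eqref{kere1in}, transported by the even operator \eqref{nuA2}), and this is what survives.

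After these cancellations I expect to be left with an identity asserting that the $\psi^1$-dependent surface quantity $\nu_{B_1}\cdot\nabla\acute{S}_{B_2}^0[\nabla_{\partial B_2}\cdot\psi^1] + \left(-\frac{1}{2}I + (K_{B_1}^0)^*\right)[\nabla_{\partial B_1}\cdot\psi^1]$ evaluated at $\mathbf{x}$ equals its counterpart evaluated at $\tilde{\mathbf{x}}$. Independently, carrying out the reflection change of variables directly on this same quantity (via \eqref{nuSi} and \eqref{nuKi}) rewrites it in terms of the reflected densities $\widetilde{\nabla_{\partial B_1}\cdot\psi^1}$ and $\widetilde{\nabla_{\partial B_2}\cdot\psi^1}$. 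Equating the two expressions and matching term by term then forces $\nabla_{\partial B_1}\cdot\psi^1(\mathbf{x}) = \widetilde{\nabla_{\partial B_2}\cdot\psi^1}(\tilde{\mathbf{x}}) = \nabla_{\partial B_2}\cdot\psi^1(\tilde{\mathbf{x}})$, which is exactly the claimed relation \eqref{surphi1}.

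The main obstacle is the sign bookkeeping rather than any single estimate. Unlike the zeroth-order case --- where the constraint was the homogeneous $(\nu\cdot\bE_0)^-=0$ and the driving density $\phi^0=\nu\times\bE_0^i$ was odd, yielding the minus sign in \eqref{nusur1} --- here the constraint \eqref{e1e1} already carries a minus sign while the effective driving term $\nu\times\bE_1^i$ is \emph{even} by \eqref{kere1in}. The delicate point is to verify that these two features combine so that the genuine $\psi^1$-terms end up even, producing the \emph{plus} sign in \eqref{surphi1}; this demands assigning the correct parity to each of the eight terms in the expanded interior representation and checking that the $\phi^0$- and $\psi^0$-generated terms cancel exactly. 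A secondary point is that passing from the final operator identity to the pointwise density relation relies on the invertibility of the governing boundary operator, of the same type recorded after \eqref{lk}.
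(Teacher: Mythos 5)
Your proposal follows essentially the same route as the paper's proof: upgrade \eqref{nusur1} to the full even symmetry $\widetilde{\psi^0}=\psi^0$, classify each term of the interior representation of $(\nu\cdot\bE_1)^-$ by its reflection parity via \eqref{nuA2}--\eqref{nuMi}, cancel the odd $\phi^0$- and $\psi^0$-driven terms against the minus sign in \eqref{e1e1}, and compare the surviving $\psi^1$-identity with the direct reflection computation to read off \eqref{surphi1}. The only differences are minor: you (correctly) record the $\nu\cdot\nabla\times\acute{\mathcal{A}}^{2}_{D}[\phi^0]$ term as odd, whereas the paper's \eqref{e-13} displays it with the opposite sign, and you make explicit the invertibility needed to pass from the final boundary-operator identity to the pointwise density relation, which the paper leaves implicit.
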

Based on above analysis, for $ \mathbf{x} \in \mathbb{R}^3 \setminus \overline{{D}}$,  we have the follow results:
\begin{lem}\label{lemgraS}
For any bounded domain $\Omega$ containing $B_1$ and $B_2$, there exists a  positive constant C depending only on $r$ and $\|\partial \Omega \|_{C^2}$, such that
	\begin{equation*}%\label{gradient_item}
\left \| \nabla \acute{{S}} _D^{0}[\g_{\dr D}\cdot [\psibf^1]] \right \|_{L^\infty(\Omega\backslash \overline{D})}\le C.
	\end{equation*}
	\begin{proof}
		For any $\mathbf{x} \in \partial B_1$, let $\tilde{\mathbf{x}}=-\mathbf{x} \in \partial B_2$, by using \eqref{surphi1} in Lemma \ref{lempds1}, we have
		\begin{equation*}
		\begin{aligned}
		&  \acute{{S}} _D^{0}[\g_{\dr D}\cdot [\psi^1]]\left ( \mathbf{x} \right )  \\
		\nm
		& =\int_{\partial {B_1}}{\frac{-1}{4 \pi |{\mathbf{x}} -\mathbf{y}|}  \g_{\dr B_1}\cdot [\psi^1](\mathbf{y}) d \sigma(\mathbf{y})}
		+\int_{\partial {B_2}}{\frac{-1}{4 \pi |{\mathbf{x}} -\mathbf{y}|}  \g_{\dr B_2}\cdot [\psi^1](\mathbf{y}) d \sigma(\mathbf{y})}
		\\
		\nm
		& \overset{x=-\tilde{x} }{=}  \int_{\partial {B_1}}{\frac{-1}{4 \pi |-\tilde{\mathbf{x}} -\mathbf{y}|}  \g_{\dr B_1}\cdot [\psi^1](\mathbf{y}) d \sigma(\mathbf{y})}
		+\int_{\partial {B_2}}{\frac{-1}{4 \pi |-\tilde{\mathbf{x}} -\mathbf{y}|}   \g_{\dr B_2}\cdot [\psi^1](\mathbf{y}) d \sigma(\mathbf{y})}
		\\
		\nm
		& \overset{y=-\tilde{y} }{=}  \int_{\partial {B_2}}{\frac{-1}{4 \pi |\tilde{\mathbf{x}} -\tilde{\mathbf{y}} |}   \g_{\dr B_2}\cdot [\psi^1](\tilde{\mathbf{y}}) d \sigma(\tilde{\mathbf{y}})}
		+\int_{\partial {B_1}}{\frac{-1}{4 \pi |\tilde{\mathbf{x}} -\tilde{\mathbf{y}} |}  \g_{\dr B_1}\cdot [\psi^1](\tilde{\mathbf{y}}) d \sigma(\tilde{\mathbf{y}})}\\
		\nm
		&= \acute{{S}} _D^{0}[\g_{\dr D}\cdot [\psi^1]]\left ( \tilde{\mathbf{x}}  \right ).  \\
		\end{aligned}
		\end{equation*}
		Thus, we have
		\begin{equation*}%\label{wbond}
		\acute{{S}} _D^{0}[\g_{\dr D}\cdot \psi^1]\big| _{\partial B_1}=\acute{{S}} _D^{0}[\g_{\dr D}\cdot \psi^1]\big| _{\partial B_2}.
		\end{equation*}
		That is,  $\acute{{S}} _D^{0}[\g_{\dr D}\cdot [\psi^1]]$  has no potential gap on  $\partial B_{1}$  and  $\partial B_{2}$, from which one can infer that  $\nabla \acute{{S}} _D^{0}[\g_{\dr D}\cdot [\psi^1]]$  is bounded in  $\mathbb{R}^{3} \backslash\left(B_{1} \cup B_{2}\right)$  (this can be proved by following the same lines of the proof of Theorem 2.1 in \cite{KLY13}).
	\end{proof}
\end{lem}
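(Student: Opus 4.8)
The plan is to reduce the desired $L^\infty$ bound to a statement about the absence of a \emph{potential gap} for the single layer potential $\acute{{S}}_D^{0}[\g_{\dr D}\cdot\psibf^1]$ across the two spheres. In the theory of gradient blow-up between nearly-touching conductors, the unbounded part of $\nabla\acute{{S}}_D^{0}[\cdots]$ is driven entirely by the difference of the boundary values of $\acute{{S}}_D^{0}[\cdots]$ on $\partial B_1$ and $\partial B_2$; once these two constants coincide, the associated harmonic function carries no singular (gap-induced) contribution in the thin neck and its gradient stays uniformly bounded on $\Omega\setminus\overline D$. Accordingly I would first establish the trace identity
\[
\acute{{S}}_D^{0}[\g_{\dr D}\cdot\psibf^1]\big|_{\partial B_1}=\acute{{S}}_D^{0}[\g_{\dr D}\cdot\psibf^1]\big|_{\partial B_2},
\]
and only then invoke a regularity result to conclude.

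The key computational step uses the reflection $\tilde{\mathbf{x}}=-\mathbf{x}$ together with the symmetry of the density furnished by \eqref{surphi1} in Lemma \ref{lempds1}, namely $\nabla_{\partial B_1}\cdot\psibf^1(\mathbf{x})=\nabla_{\partial B_2}\cdot\psibf^1(\tilde{\mathbf{x}})$. Fixing $\mathbf{x}\in\partial B_1$ and writing $\acute{{S}}_D^{0}[\g_{\dr D}\cdot\psibf^1](\mathbf{x})$ as the sum of integrals over $\partial B_1$ and $\partial B_2$ against the kernel $\Gamma_0(\mathbf{x}-\mathbf{y})=-1/(4\pi|\mathbf{x}-\mathbf{y}|)$, I would substitute $\mathbf{x}=-\tilde{\mathbf{x}}$ and then change variables $\mathbf{y}=-\tilde{\mathbf{y}}$ in each integral. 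Since $|-\tilde{\mathbf{x}}-\mathbf{y}|=|\tilde{\mathbf{x}}-\tilde{\mathbf{y}}|$, the kernel is invariant under the simultaneous sign flip of both arguments, while the density relation converts $\g_{\dr B_j}\cdot\psibf^1$ into its counterpart on the opposite sphere. The two boundary integrals are thereby interchanged, yielding exactly $\acute{{S}}_D^{0}[\g_{\dr D}\cdot\psibf^1](\tilde{\mathbf{x}})$ with $\tilde{\mathbf{x}}\in\partial B_2$, which gives the claimed equality of traces and hence the vanishing of the potential gap.

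The hardest part of the argument is not the symmetry bookkeeping, which has effectively been isolated into Lemma \ref{lempds1}, but rather the final implication that the absence of a potential gap forces $\nabla\acute{{S}}_D^{0}[\g_{\dr D}\cdot\psibf^1]$ to be bounded independently of $\epsilon$. For this I would follow the scheme of Theorem 2.1 in \cite{KLY13}: the function $\acute{{S}}_D^{0}[\g_{\dr D}\cdot\psibf^1]$ is harmonic away from $\partial D$, decays at infinity, and the blow-up of its gradient in the narrow region can only originate from a nonzero jump of its traces between the two spheres; having shown that these traces coincide, one estimates the remaining regular contribution by maximum-principle type bounds, obtaining a constant $C$ depending only on $r$ and $\|\partial\Omega\|_{C^2}$. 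I would take particular care that this dependence is genuinely $\epsilon$-free, which is precisely where the exact reflection symmetry of the radial two-sphere configuration is indispensable.
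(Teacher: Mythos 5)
Your proposal follows essentially the same route as the paper: you use the reflection $\tilde{\mathbf{x}}=-\mathbf{x}$ together with the density symmetry from Lemma \ref{lempds1} to show the traces of $\acute{{S}}_D^{0}[\g_{\dr D}\cdot\psibf^1]$ on $\partial B_1$ and $\partial B_2$ coincide, and then invoke the argument of Theorem 2.1 in \cite{KLY13} to pass from the absence of a potential gap to the uniform gradient bound. This matches the paper's proof step for step, including the change of variables $\mathbf{y}=-\tilde{\mathbf{y}}$ and the final appeal to \cite{KLY13}.
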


Then, consider the curl item $\nabla \times \acute{\mathcal{A}}_{D}^{0}
[\phibf^1](\mathbf{x})$ in $\bE_1$. We have the following  estimation result.
\begin{lem}\label{lemkerA}
For any bounded domain $\Omega$ containing $B_1$ and $B_2$, there exists a  positive constant C depending only on $r$ and $\|\partial \Omega \|_{C^2}$, such that
\begin{equation*}%\label{curl_item}
\left \| \nabla \times \acute{\mathcal{A}}_{D}^{0}
[\phibf^1] \right \|_{L^\infty(\Omega\backslash \overline{D})}\le C.
\end{equation*}

\begin{proof}
	To begin with, let's prove the boundedness of  $\|{\phi^1}\|_{\mathrm{TH}({\rm div}, \dr D)}$. Recall that
		\begin{equation*}
		\phibf^1={\nu}\times \bE^{i}_1- \tilde{C} \acute{\mathcal{M}} _D^{2}\left [ \phibf^0 \right ]-\tilde{C} \nu \times \acute{\mathcal{A}}_D ^0\left[\psi^0 \right]-\tilde{C} \nu \times \g\acute{{S}} _D^2[\g_{\dr D} \cdot \psi^0].
		\end{equation*}
		We will estimate $\phi_1$ term by term. From the analyticity of $\bE_1^{i}$ and $\nabla {\times \bE_1^{i}}$, it can be easily shown that the the first part
		$$\|{\nu \times \bE_1^{i}}\|_{\mathrm{TH}({\rm div}, \dr D)}=\|{\nu \times \bE_1^{i}}\|_{L^2(\dr D)}+\|\nu \cdot(\nabla {\times \bE_1^{i}})\|_{L^2(\dr D)}$$  is bounded.
	 For the second part, since $\phi^0=\nu\times \bE_0^{i}$ is analytic, and combining this with the fact that $\mathcal{M}_D^k:
		\mathrm{L}_T^2 (\partial D)  \longrightarrow \mathrm{L}_T^2 (\partial D)$ is a bounded operator, we can conclude that $\|{\acute{\mathcal{M}} _D^{2}\left [ \phibf^0 \right ]}\|_{\mathrm{TH}({\rm div}, \dr D)}$ is bounded. Before proving the last two terms, let's analyze the properties of their kernel functions $\psi^0$ and $\g_{\dr D} \cdot \psi^0$. Recall that
		\begin{equation}\label{phi4}
		 \tilde{C} \left ( \frac{{\mathcal{I} }}{2}\mathcal  +\acute{\mathcal{M}} _D^{0} \right ) \left [ \psi^0 \right ]=	\nu(\mathbf{x}) \times \acute{\mathcal{A}}_D^0\left [ \phi^0 \right ] +i\nu\times \bH_0^{i},
		\end{equation}
the operator $\nu(\mathbf{x}) \times \acute{\mathcal{A}}_D^0$ is continuous on $\partial D$, and $\bH_0^{i}$ is analytic. Therefore, the right-hand side of \eqref{phi4} is bounded. We note that $-1/2$ is not an eigenvalue of $\mathcal{M}_D^{k}$ (cf. \cite{RG}). Therefore, using equation \eqref{phi4}, we can deduce that $\psi^0$ is bounded.
 Taking surface divergence in \eqref{phi4}, we get
\begin{equation}\label{phi4sur}
 \tilde{C} \left ( \frac{{I} }{2}-\left ( {K}_{D}^0  \right ) ^*\right ) \left [ \g_{\dr {D}}\cdot [\psi^0] \right ]=\nu\cdot\nabla\times\mathcal{A}_D^0[\phi^0],
\end{equation}
	by using the continuity of $\nu\cdot\nabla\times\mathcal{A}_D^0$, we can conclude that the right-hand side of equation \eqref{phi4sur} is bounded. Furthermore, note that
		\begin{equation*}
		\int_{\partial D} \nabla _{\partial D}\cdot  \psi^0=-\int_{\partial D} \psi^0\cdot\nabla _{\partial D} [1]=0.
		\end{equation*}
		As a result, we can conclude that $\frac{{I}}{2}- {K}_{D}^0$ is an invertible operator. Consequently, from equation \eqref{phi4sur}, we can deduce that $\g_{\dr D} \cdot \psi^0$ is bounded. The operators $\nu(\mathbf{x}) \times \acute{\mathcal{A}}_D^0$ and $\nu \times \g\acute{{S}} _D^2$ are continuous on $\partial D$. Therefore, we can conclude that the third and fourth parts of $\phi^1$ are bounded. Using the analysis above, we can deduce that $\|{\phi^1}\|_{\mathrm{TH}({\rm div}, \dr D)}$ is bounded.
	
	Next, we will analyze the boundedness of $\nabla \times \acute{\mathcal{A}}_{D}^{0}[\phibf^1]$. Let $f(\mathbf{x}):=\nabla \times \acute{\mathcal{A}}_{D}^{0}[\phibf^1](\mathbf{x})$,  it satisfies the following system:
		\begin{equation}\label{fx}
		\left \{
		\begin{aligned}
		&\Delta f(\mathbf{x}) = 0, \hspace*{9.74cm} \mbox{in} \quad \RR^3\backslash \overline{D} \,\\
		&\ds \nu_{B_1}(\mathbf{x})\times f(\mathbf{x})\mid_{\partial B_1}^+
		=\left ( \frac{{-\mathcal{I} }}{2}\mathcal  +\acute{\mathcal{M}} _{B_1}^{0} \right ) \left [ \phi^1 \right ](\mathbf{x})+
		\nu_{B_1}(\mathbf{x}) \times\nabla \times \acute{\mathcal{A}}_{B_2}^0\left [ \phi^1 \right ](\mathbf{x}),  \hspace*{0cm}\quad x\in  \partial B_1,\\
		&\nu_{B_2}(\mathbf{x})\times f(\mathbf{x})\mid_{\partial B_2}^+
		=\left ( \frac{{-\mathcal{I} }}{2}\mathcal  +\acute{\mathcal{M}} _{B_2}^{0} \right ) \left [ \phi^1 \right ](\mathbf{x})+
		\nu_{B_2}(\mathbf{x}) \times\nabla \times \acute{\mathcal{A}}_{B_1}^0\left [ \phi^1 \right ](\mathbf{x}),\hspace*{0cm}\quad x\in  \partial B_2,\\
		&\nu\cdot f(\mathbf{x})=\nu \cdot\nabla \times \acute{\mathcal{A}}_{D}^{0}[\phibf^1](\mathbf{x}),\hspace*{7.4cm}\quad \text{on}\quad   \partial D,\\
		&f(\mathbf{x})=O\left(|\mathbf{x}|^{-2}\right), \hspace*{7.64cm}\qquad \text { as }|\mathbf{x}| \rightarrow \infty.
		\end{aligned}
		\right.
		\end{equation}
We observe from equation \eqref{fx} that both $\nu \times f$ and $\nu \cdot f$ are bounded on the boundary $\partial D$. By applying the maximum principle (\cite{MP}), we can conclude that $\left \| f\right \| _{L^\infty(\Omega\backslash \overline{D})}=\left \| \nabla \times \acute{\mathcal{A}}_{D}^{0}[\phi^1] \right \|_{L^\infty(\Omega\backslash \overline{D})}$ is also bounded. Hence, we have completed the proof.

\end{proof}
\end{lem}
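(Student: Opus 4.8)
The plan is to prove the estimate in two stages. First I would show that the density $\phibf^1$ is bounded in $\mathrm{TH}(\mathrm{div},\dr D)$ by a constant independent of $\epsilon$, and then I would transfer this to the desired $L^\infty$ bound on the field $f:=\nabla\times\acute{\mathcal{A}}_D^0[\phibf^1]$ by recognizing $f$ as the solution of an exterior boundary value problem for the Laplacian and invoking the maximum principle.

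For the first stage I would estimate $\phibf^1$ term by term from its explicit representation \eqref{phi3}. The contributions $\nu\times\bE_1^i$ and $\tilde{C}\acute{\mathcal{M}}_D^2[\phibf^0]$ are controlled immediately, since $\bE_1^i$ is analytic by \eqref{as_in2}, the datum $\phibf^0=\nu\times\bE_0^i$ is analytic, and $\mathcal{M}_D^k$ maps $\mathrm{L}_T^2(\dr D)$ boundedly into itself. The genuine work is the boundedness of the auxiliary density $\psibf^0$ and of its surface divergence $\g_{\dr D}\cdot\psibf^0$, which enter the remaining two terms. For $\psibf^0$ I would read \eqref{phi2} as an integral equation of the form $\tilde{C}\big(\tfrac12\mathcal{I}+\acute{\mathcal{M}}_D^0\big)[\psibf^0]=\mathrm{i}\,\nu\times\bH_0^i-\nu\times\acute{\mathcal{A}}_D^0[\phibf^0]$; the right-hand side is bounded by continuity of $\nu\times\acute{\mathcal{A}}_D^0$ and analyticity of $\bH_0^i$, while the left-hand operator is invertible because $-1/2$ is not an eigenvalue of $\mathcal{M}_D^0$ (cf.\ \cite{RG}). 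Taking the surface divergence of the same identity produces a scalar equation governed by $\tfrac12 I-(K_D^0)^*$; since $\int_{\dr D}\g_{\dr D}\cdot\psibf^0=0$, the datum lies in the range on which $\tfrac12 I-K_D^0$ is invertible, so $\g_{\dr D}\cdot\psibf^0$ is bounded as well. Propagating these bounds through the continuous operators $\nu\times\acute{\mathcal{A}}_D^0$ and $\nu\times\g\acute{S}_D^2$ closes the $\mathrm{TH}(\mathrm{div})$ estimate on $\phibf^1$.

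For the second stage I would characterize $f=\nabla\times\acute{\mathcal{A}}_D^0[\phibf^1]$ as a harmonic vector field: each component is harmonic in $\mathbb{R}^3\setminus\overline D$ and $f=O(|\mathbf{x}|^{-2})$ at infinity, while, splitting $\acute{\mathcal{A}}_D^0=\acute{\mathcal{A}}_{B_1}^0+\acute{\mathcal{A}}_{B_2}^0$ and applying the jump formula of Proposition \ref{propjumpM} to the self-interaction (the cross term being smooth across the opposite boundary), its tangential trace on $\dr B_j$ equals $\big(-\tfrac12\mathcal{I}+\acute{\mathcal{M}}_{B_j}^0\big)[\phibf^1]+\nu_{B_j}\times\nabla\times\acute{\mathcal{A}}_{B_{3-j}}^0[\phibf^1]$ and its normal trace equals $\nu\cdot\nabla\times\acute{\mathcal{A}}_D^0[\phibf^1]$. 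Since $\phibf^1$ is bounded and all operators entering the boundary data are continuous, both $\nu\times f$ and $\nu\cdot f$ — hence the full trace of $f$ on $\dr D$ — are bounded uniformly in $\epsilon$; the maximum principle (\cite{MP}) then controls $f$ throughout $\Omega\setminus\overline D$ by its boundary values.

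The main obstacle I anticipate is the uniformity in $\epsilon$ of all these bounds, in particular for $\psibf^0$, for $\g_{\dr D}\cdot\psibf^0$, and for the cross-interaction trace $\nu_{B_1}\times\nabla\times\acute{\mathcal{A}}_{B_2}^0[\phibf^1]$ — the field induced on $\dr B_1$ by the neighbouring sphere, which is precisely the quantity through which a concentration in the neck could enter. The decisive point is the symmetry structure of the first-order densities developed in the preceding lemmas, most notably \eqref{surphi1}, which shows that $\g_{\dr D}\cdot\psibf^1$ takes matching values on the two spheres and therefore produces \emph{no} potential gap across the narrow region. This is what distinguishes $\bE_1$ from the leading term $\bE_0$ of Lemma \ref{le0}, whose gradient blows up precisely because of such a gap, and it is what allows the maximum principle to deliver an $\epsilon$-independent bound.
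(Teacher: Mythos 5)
Your proposal follows essentially the same two-stage argument as the paper: a term-by-term $\mathrm{TH}(\mathrm{div},\partial D)$ bound on $\phibf^1$ via the representation \eqref{phi3}, resolving $\psibf^0$ and $\nabla_{\partial D}\cdot\psibf^0$ through the integral equations governed by $\tfrac12\mathcal{I}+\acute{\mathcal{M}}_D^0$ and $\tfrac12 I-(K_D^0)^*$, followed by recasting $f=\nabla\times\acute{\mathcal{A}}_D^0[\phibf^1]$ as a harmonic field with bounded tangential and normal traces and invoking the maximum principle. The approach and all key ingredients coincide with the paper's proof; your closing remarks on $\epsilon$-uniformity are a fair observation but do not alter the route.
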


Based on the above analysis operators and functions, we have obtained the following estimation results for $\bE_1$:
	\begin{lem}\label{E1_esti}
		For any bounded domain $\Omega$ containing $B_1$ and $B_2$, there has a  positive constant C depending only on $r$ and $\|\partial \Omega \|_{C^2}$, such that
		\begin{equation}\label{E1_esti1}
		\begin{aligned}
		\|\bE_1\|_{L^\infty(\Omega\backslash \overline{D})} \le C.
		\end{aligned}
		\end{equation}
	\begin{proof}

Using the representation of $\bE_1$ in Equation \eqref{represent_e1}, we obtain 
 \begin{equation*}%\label{este}
 \begin{aligned}
& \left \| \bE_1\right \| _{L^{\infty }\left ( \Omega\backslash \overline{D} \right ) }  \\
&= \left \| \bE^{i}_1 +  \nabla \times \acute{\mathcal{A}} _{D}^{0}
 [\phibf^1] +\nabla\acute{{S}} _D^{0}[\g_{\dr D}\cdot [\psibf^1]] \right \| _{L^{\infty }\left ( \Omega\backslash \overline{D} \right ) }
 \\
 &\le \left \| \bE_1^{i}\right \| _{L^{\infty }\left ( \Omega\backslash \overline{D} \right ) }+\left \| \nabla \times \acute{\mathcal{A}}_{D}^{0}
 [\phibf^1] \right \|_{L^\infty(\Omega\backslash \overline{D})}+\left \| \nabla \acute{{S}} _D^{0}[\g_{\dr D}\cdot [\psibf^1]] \right \|_{L^\infty(\Omega\backslash \overline{D})}.
 \end{aligned}
 \end{equation*}
In conjunction with the analyticity of $\bE_1^{i}$ and the conclusions in Lemma \ref{lemgraS} and Lemma \ref{lemkerA}, it can be deduced that $\left \| \bE_1\right \| _{L^{\infty }\left ( \Omega\backslash \overline{D} \right ) }$ is bounded for any bounded domain $\Omega$ that contains $B_1$ and $B_2$.
	\end{proof}
	\end{lem}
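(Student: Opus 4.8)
The plan is to bound $\bE_1$ in the exterior region directly from its integral representation \eqref{represent_e1} and to reduce the estimate to the two preceding lemmas by a single triangle inequality. For $\mathbf{x}\in\mathbb{R}^3\setminus\overline{D}$ the representation reads $\bE_1=\bE_1^{i}+\nabla\times\acute{\mathcal{A}}_D^0[\phibf^1]+\nabla\acute{{S}}_D^0[\g_{\dr D}\cdot[\psibf^1]]$, so I would first write
\begin{equation*}
\|\bE_1\|_{L^\infty(\Omega\setminus\overline{D})} \le \|\bE_1^{i}\|_{L^\infty(\Omega\setminus\overline{D})} + \big\|\nabla\times\acute{\mathcal{A}}_D^0[\phibf^1]\big\|_{L^\infty(\Omega\setminus\overline{D})} + \big\|\nabla\acute{{S}}_D^0[\g_{\dr D}\cdot[\psibf^1]]\big\|_{L^\infty(\Omega\setminus\overline{D})},
\end{equation*}
and then estimate the three summands one by one.

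The incident contribution $\bE_1^{i}(\mathbf{x})=\mathrm{i}\mathbf{p}\,(\mathbf{x}\cdot\mathbf{d})$ is an affine (hence real-analytic) vector field, so its $L^\infty$-norm over the bounded set $\Omega$ is controlled by $|\mathbf{p}|$ and the diameter of $\Omega$. The second summand is exactly the quantity estimated in Lemma \ref{lemkerA}, and the third summand is the quantity estimated in Lemma \ref{lemgraS}; both lemmas furnish constants depending only on $r$ and $\|\partial\Omega\|_{C^2}$. Summing the three bounds yields \eqref{E1_esti1}.

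The hard part is not this final assembly but the two input lemmas, which is where the real work lives. For Lemma \ref{lemkerA} one must first show that $\|\phibf^1\|_{\mathrm{TH}(\mathrm{div},\partial D)}$ stays bounded as $\epsilon\to0$: starting from the representation \eqref{phi3} for $\phibf^1$ and using the invertibility of $\tfrac12\mathcal{I}+\acute{\mathcal{M}}_D^0$ (recall $-\tfrac12$ is not an eigenvalue of $\mathcal{M}_D^0$) together with that of $\tfrac12 I-(K_D^0)^*$ on the mean-zero subspace, after which one recasts $\nabla\times\acute{\mathcal{A}}_D^0[\phibf^1]$ as the harmonic field \eqref{fx} whose tangential and normal traces on $\partial D$ are bounded, and concludes via the maximum principle. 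For Lemma \ref{lemgraS} the decisive input is the reflection symmetry $\g_{\dr B_1}\cdot\psibf^1(\mathbf{x})=\g_{\dr B_2}\cdot\psibf^1(\tilde{\mathbf{x}})$ proved in Lemma \ref{lempds1}, which forces $\acute{{S}}_D^0[\g_{\dr D}\cdot\psibf^1]$ to take equal values on $\partial B_1$ and $\partial B_2$. This absence of a potential gap across the narrow neck is precisely what prevents the gradient from concentrating as $\epsilon\to0$, in sharp contrast with the leading term $\bE_0$, whose potential $u$ does jump across the inclusions and therefore blows up at the rate $\epsilon^{-1}|\ln\epsilon|^{-1}$.
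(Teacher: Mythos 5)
Your proposal is correct and follows essentially the same route as the paper: both apply the triangle inequality to the exterior representation \eqref{represent_e1} of $\bE_1$, bound the affine incident term $\bE_1^{i}$ directly, and delegate the two layer-potential terms to Lemma \ref{lemkerA} and Lemma \ref{lemgraS} respectively. Your additional summary of what drives those two lemmas (boundedness of $\phibf^1$ plus the maximum principle, and the reflection symmetry of $\g_{\dr D}\cdot\psibf^1$ eliminating the potential gap) accurately reflects the paper's arguments there as well.
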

	
\subsection{Proof of Theorem \ref{th:main01}}
\begin{proof}
 We are now in the position to validate our main theorem, Theorem \ref{th:main01}. Taking note of the expansions $\bE = \bE_0(\mathbf{x}) + k\bE_1(\mathbf{x}) + k^2\bE_2(\mathbf{x}) + O(k^2)$, we have already obtained the estimations of $\bE_0$ and $\bE_1$ in Lemma \ref{le0} and Lemma \ref{E1_esti} respectively. By substituting \eqref{E_0_eatimate} and \eqref{E1_esti1} into the expansions of $\bE$, we consequently derive the estimation of $\left \| \bE\right \| _{L^{\infty }\left ( \Omega\backslash \overline{D} \right ) }$.
\end{proof}
	
\section{Numerical results and discussions}
In this section, we conduct some numerical experiments to corroborate the characterisations of the eletric fields blowup rate, that is to verify \eqref{th:main01} in Theorem \eqref{th:main01}. The main numerical calculations are performed by COMSOL Multiphysics.
\subsection{Spherical domains}
In the first example, we consider spherical domains that is in consistent with theoretical analysis. Give $r=1$ m, incident wave field is the plane wave and is given by \eqref{in}, incident direction $\mathbf{d}=(0,1,0)$, polarisation vector $ \mathbf{p}=(1,0,1)$, $\omega=0.005$. According to the field estimate \eqref{main}, the optimal blowup rate of the   electric field is of order $\epsilon^{-1} |\ln \epsilon|^{-1}$.

\begin{figure}[!htpb]
	\centering
	\subfigure[ $\epsilon=0.05,3D$]{\includegraphics[width=0.4\textwidth]{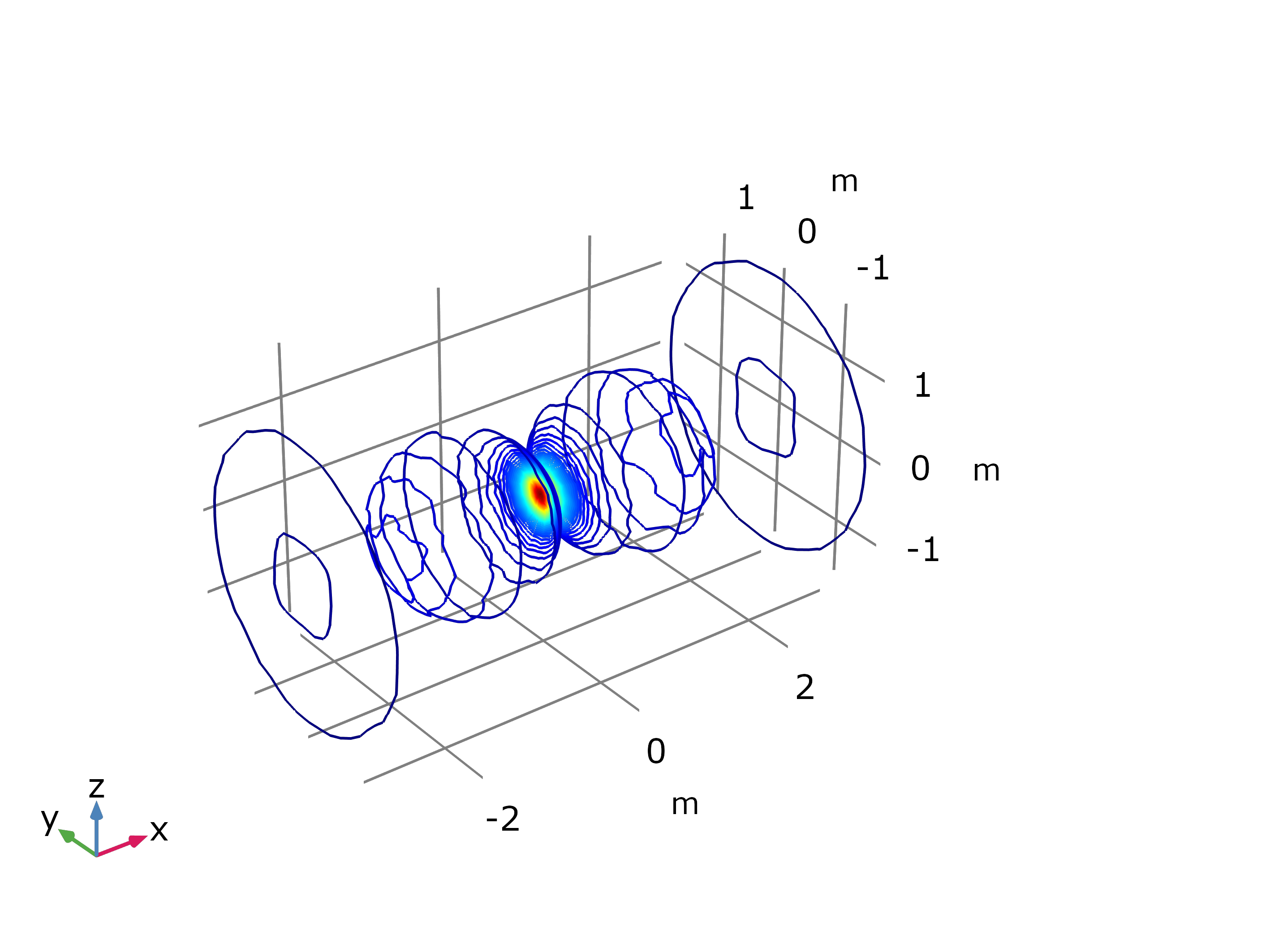}}% lleHere is how to import EPS art
	\subfigure[ $\epsilon=0.05$, the XY-plane]{\includegraphics[width=0.5\textwidth]{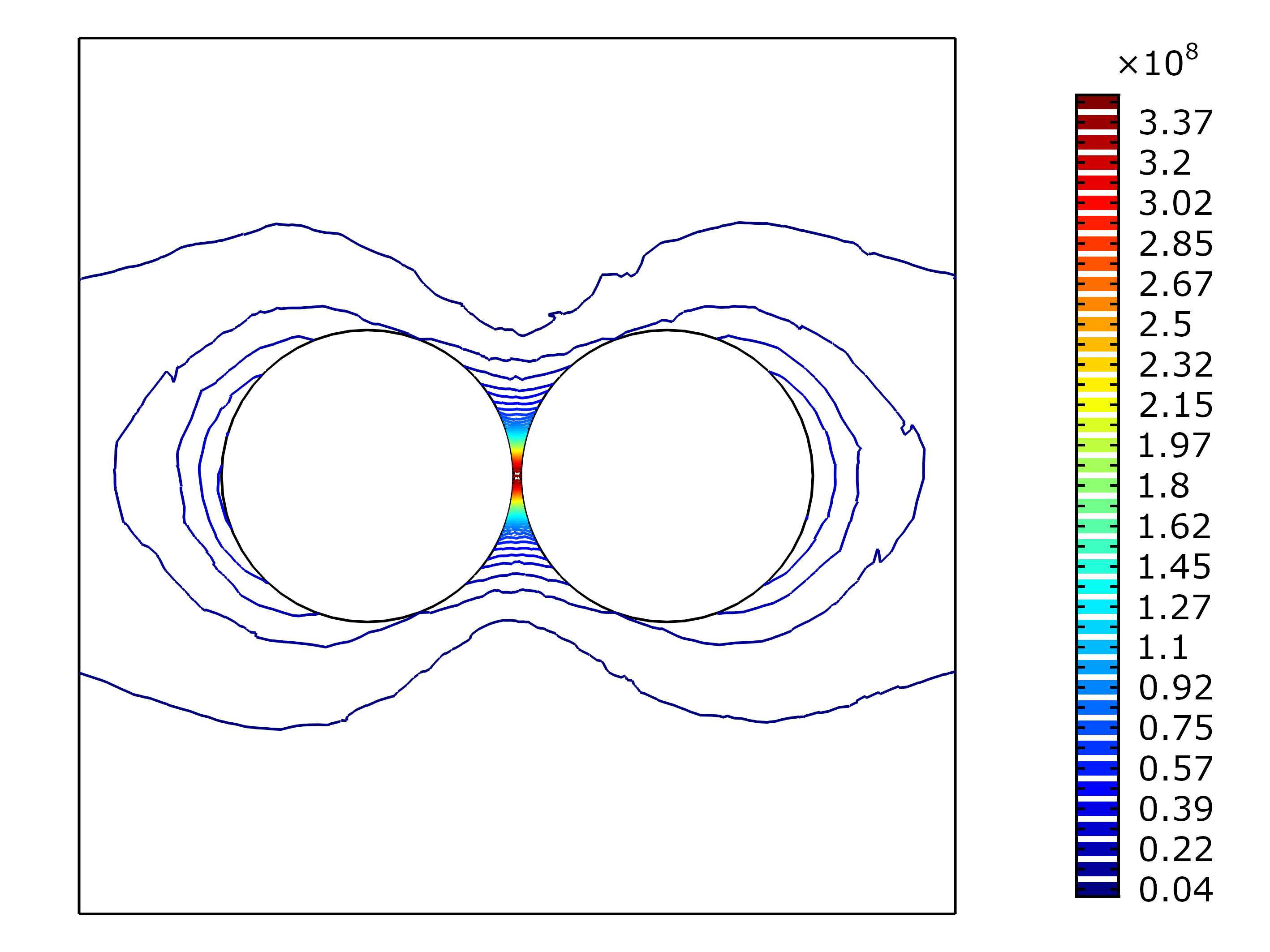}}\\
	\subfigure[ $\epsilon=0.1,3D$]{\includegraphics[width=0.4\textwidth]{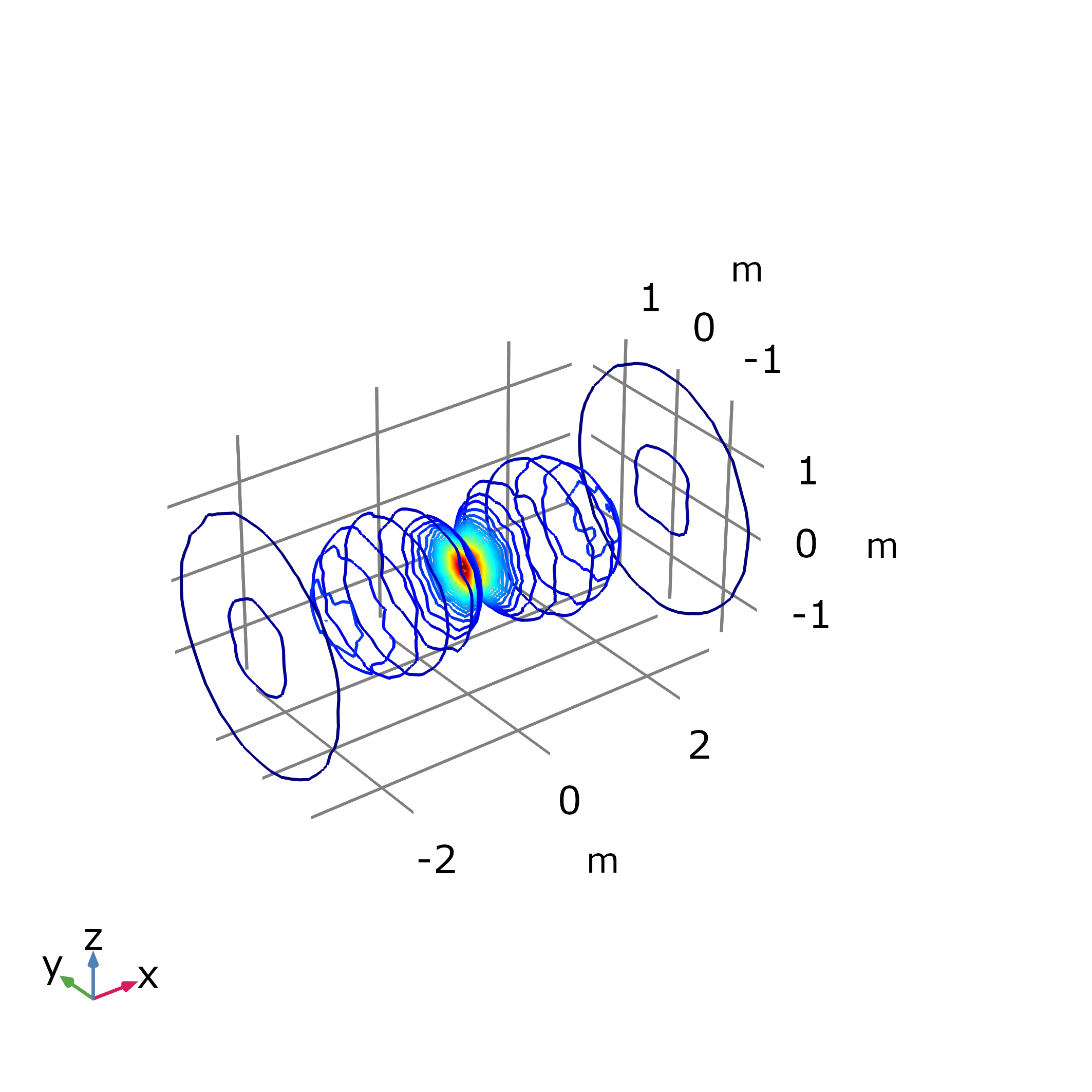}}% lleHere is how to import EPS art
	\subfigure[ $\epsilon=0.1$, the XY-plane]{\includegraphics[width=0.5\textwidth]{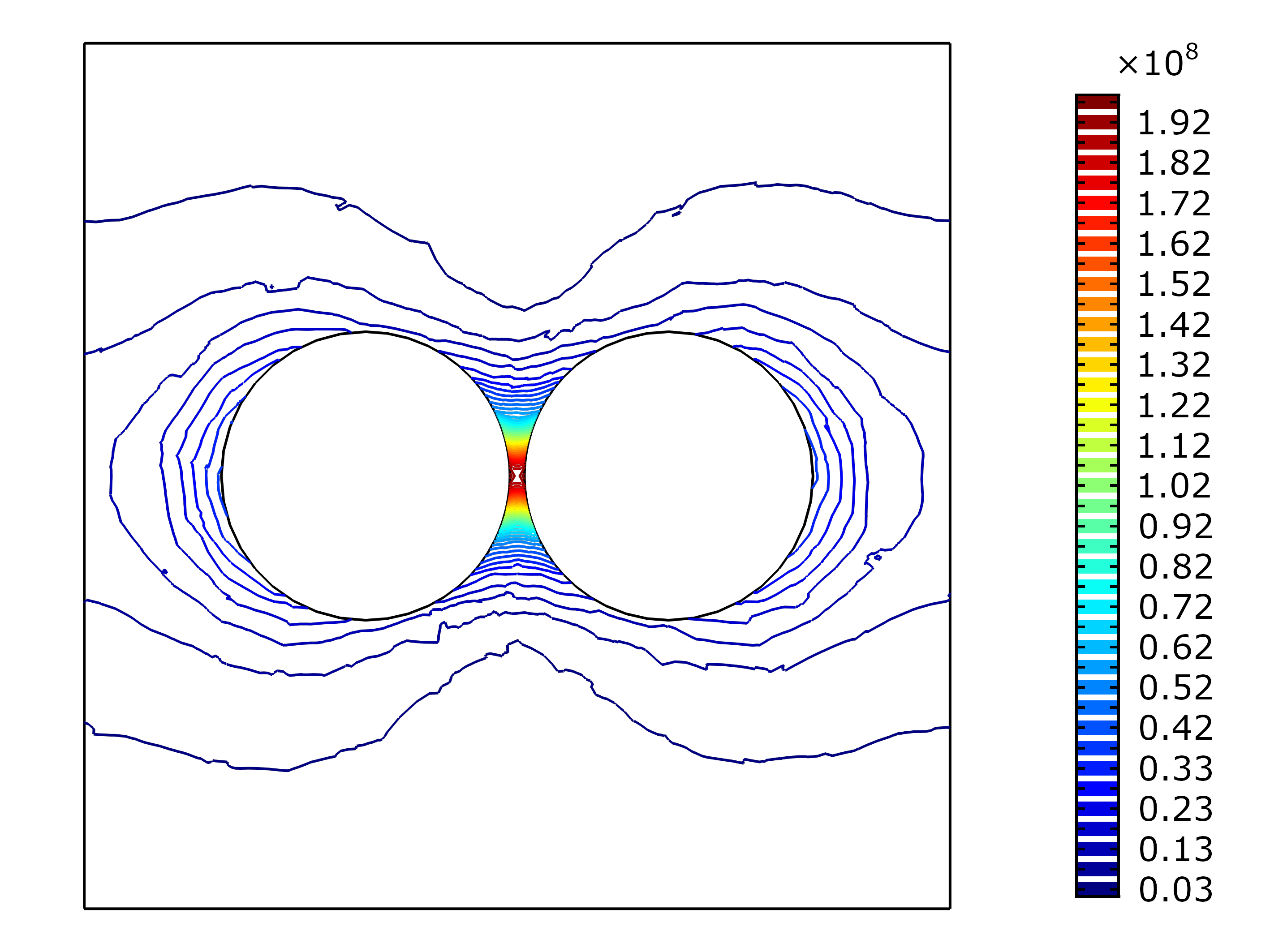}}
	\caption{\label{fig1}Spherical domains. Contour: Norm of electric field (V/m). }
\end{figure}
%In Fig. \ref{fig1}, we present the electric fields images when the asymptotic distance parameters are $\epsilon=0.05$ and $\epsilon=0.1$, it is clearly seen that electric fields behaviors extremely large in the area where the two inclusions are touching. Besides, we shall verify the optimal blowup rate in \eqref{main}.
In Fig. \ref{fig1}, we present the electric field images when the asymptotic distance parameter is $\epsilon=0.05$ and $\epsilon=0.1$. It can be clearly seen that a significant field blow up phenomenon occurs in the area between the two inclusions. Additionally, we will verify the optimal blow up rate in \eqref{main}.
For the sake of simplicity, we replace $\|\bE\|_{L^{\infty}(\Omega\setminus\overline{B_1\cup B_2})}$ by the value of the gradient field at the midpoint $O=(0,0,0)$ of the two inclusions.

\begin{figure}[!htpb]
	\centering
	\subfigure[ Numerical results]{\includegraphics[width=0.5\textwidth]{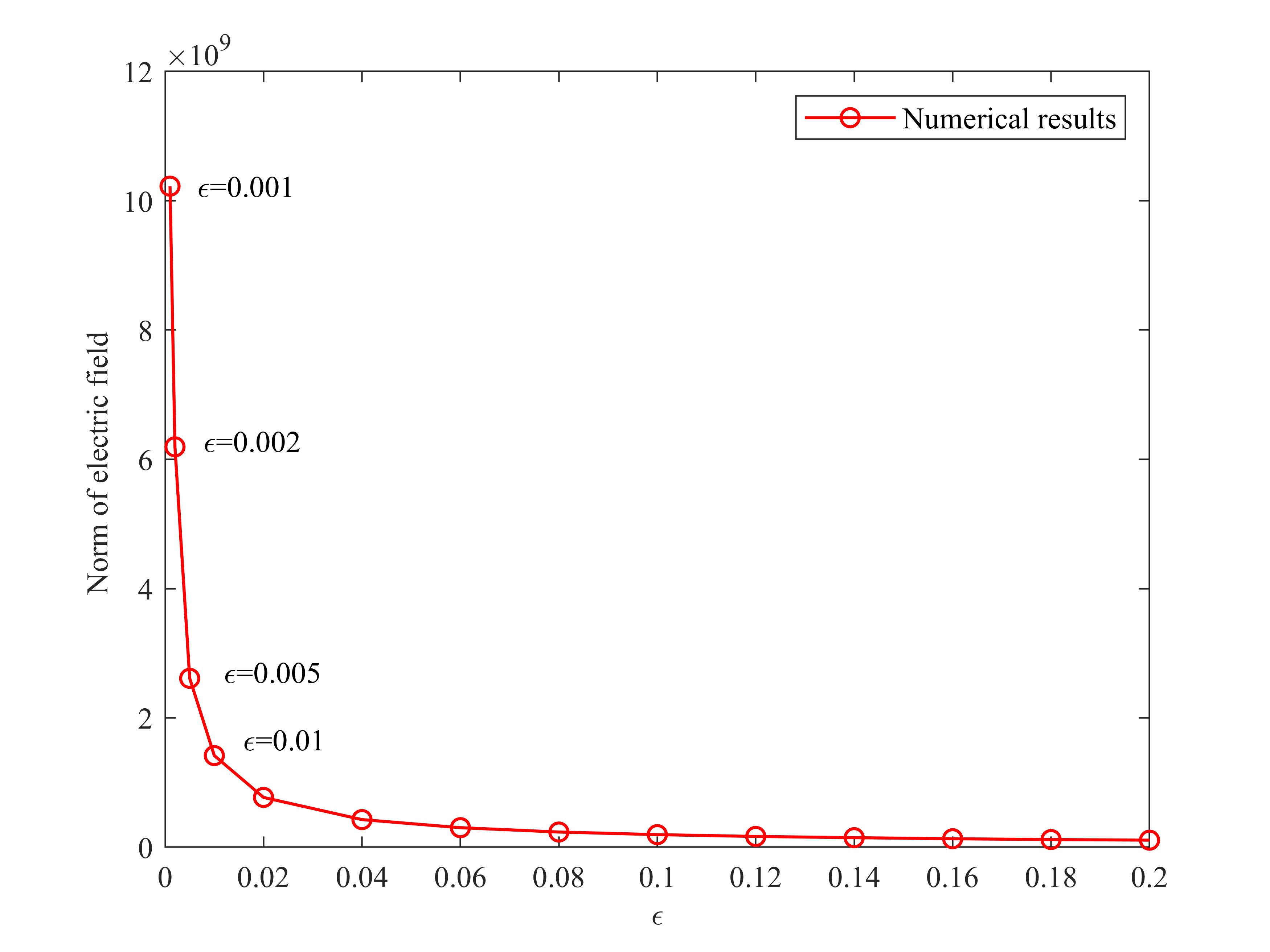}}% lleHere is how to import EPS art0
	\subfigure[ Comparison of estimate results with numerical results.] {\includegraphics[width=0.5\textwidth]{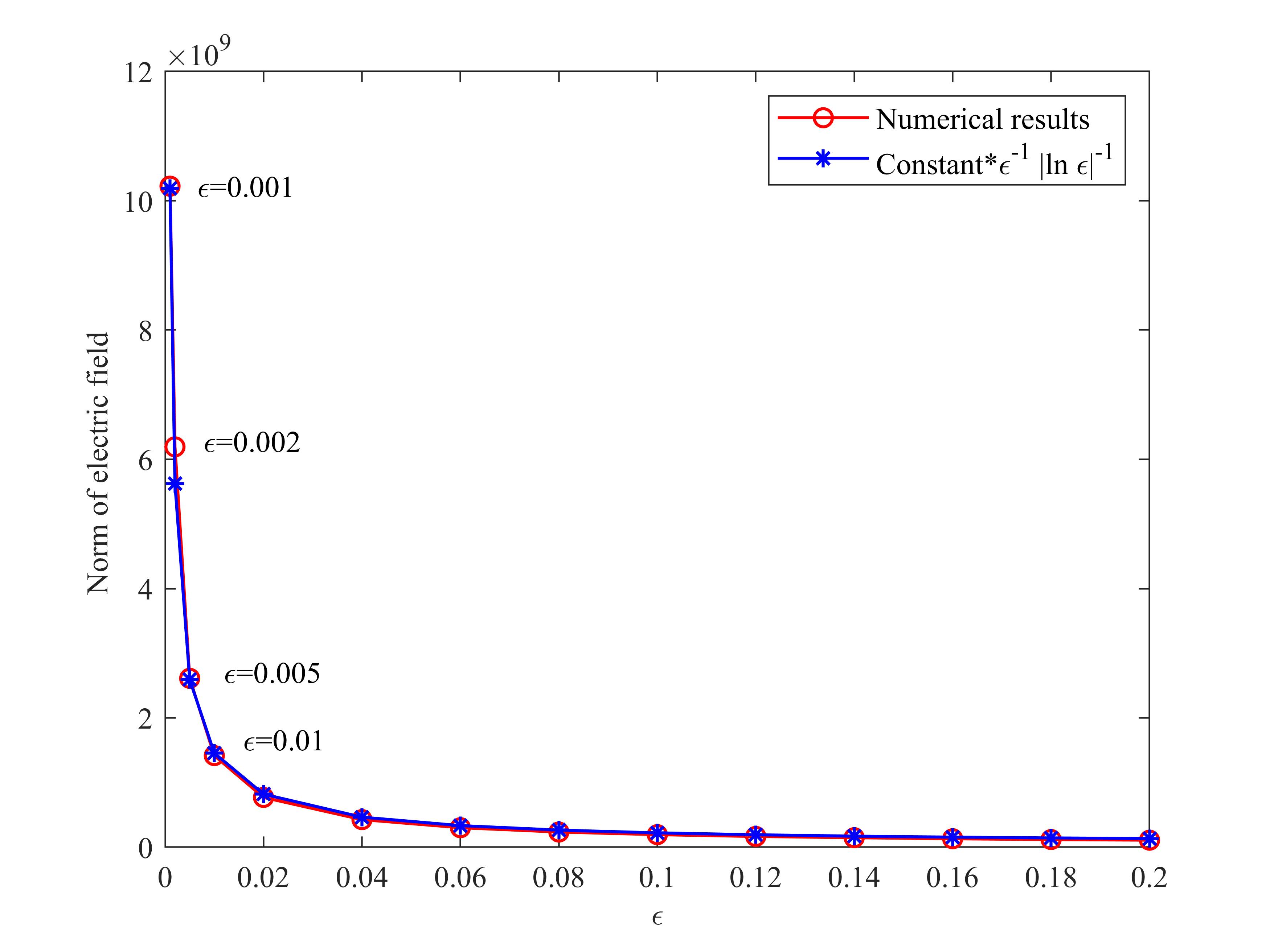}}\\% lleHere is how to import EPS art
	\caption{\label{fig2}Spherical domains. The variation of electric field norm with $\epsilon$. }
\end{figure}

 We calculate the trend of the electric field with respect to $epsilon$ within the range of $[0.001, 0.2]$ and represent the results graphically in (a). Simultaneously, by utilizing the electric field estimation formula given in \eqref{main}, we compute the theoretical estimates of the electric field and compare them with the numerical results. This comparison is illustrated graphically in (b). The findings demonstrate a significant coherence between the theoretical predictions and the numerical results. This instance substantiates the efficacy of the blow-up order estimation result presented in this paper.

\subsection{Hemisphere domains}
In this example, we consider the hemishpere domains which have the same curvature in the nearly-touching surfaces with sphere domains. The physical settings are the same as example 1.
\begin{figure}[!htpb]
	\centering
	\subfigure
	[$\epsilon=0.05$,3D]{\includegraphics[width=0.4\textwidth]{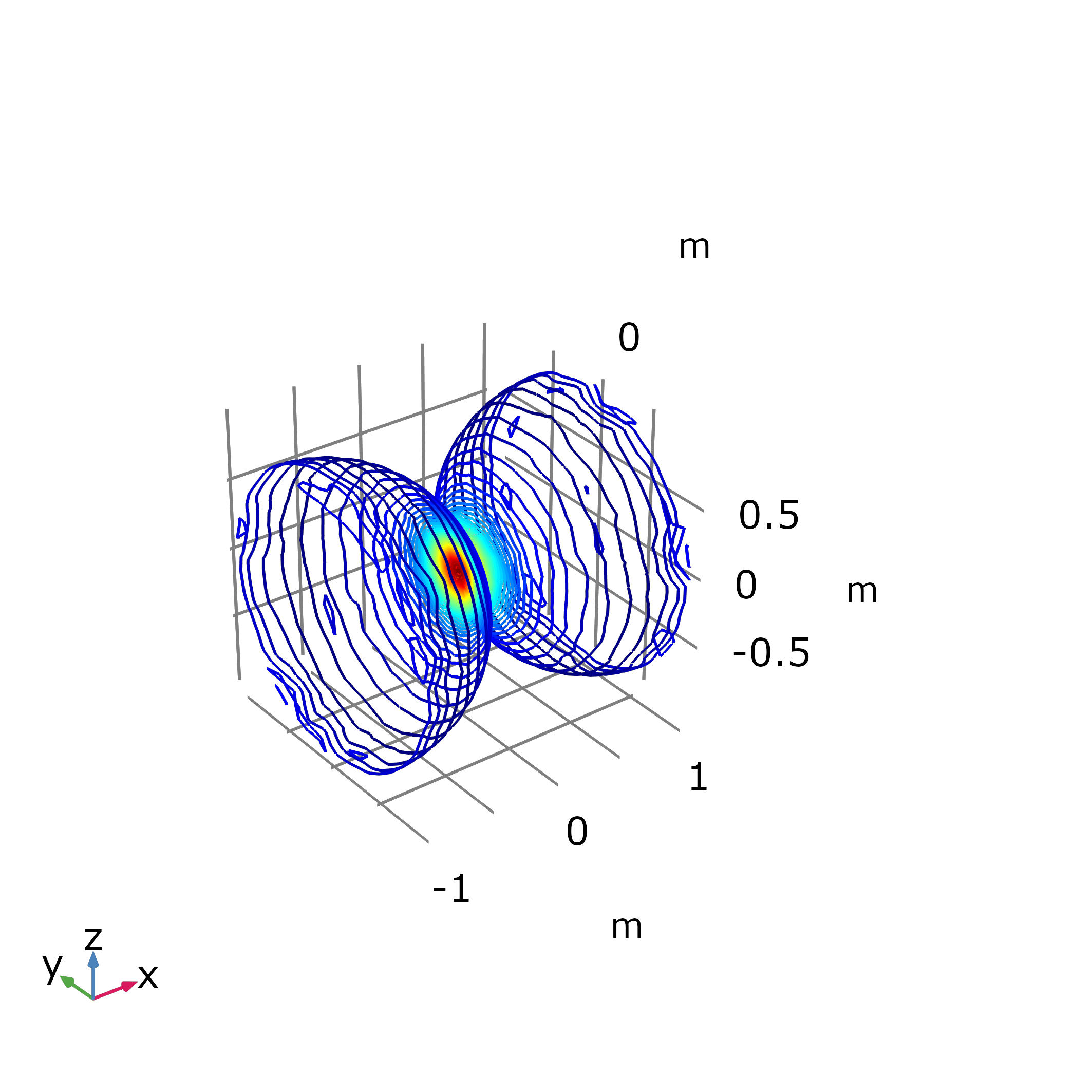}}
	\subfigure
	[$\epsilon=0.05$,the XY-plane]{\includegraphics[width=0.5\textwidth]{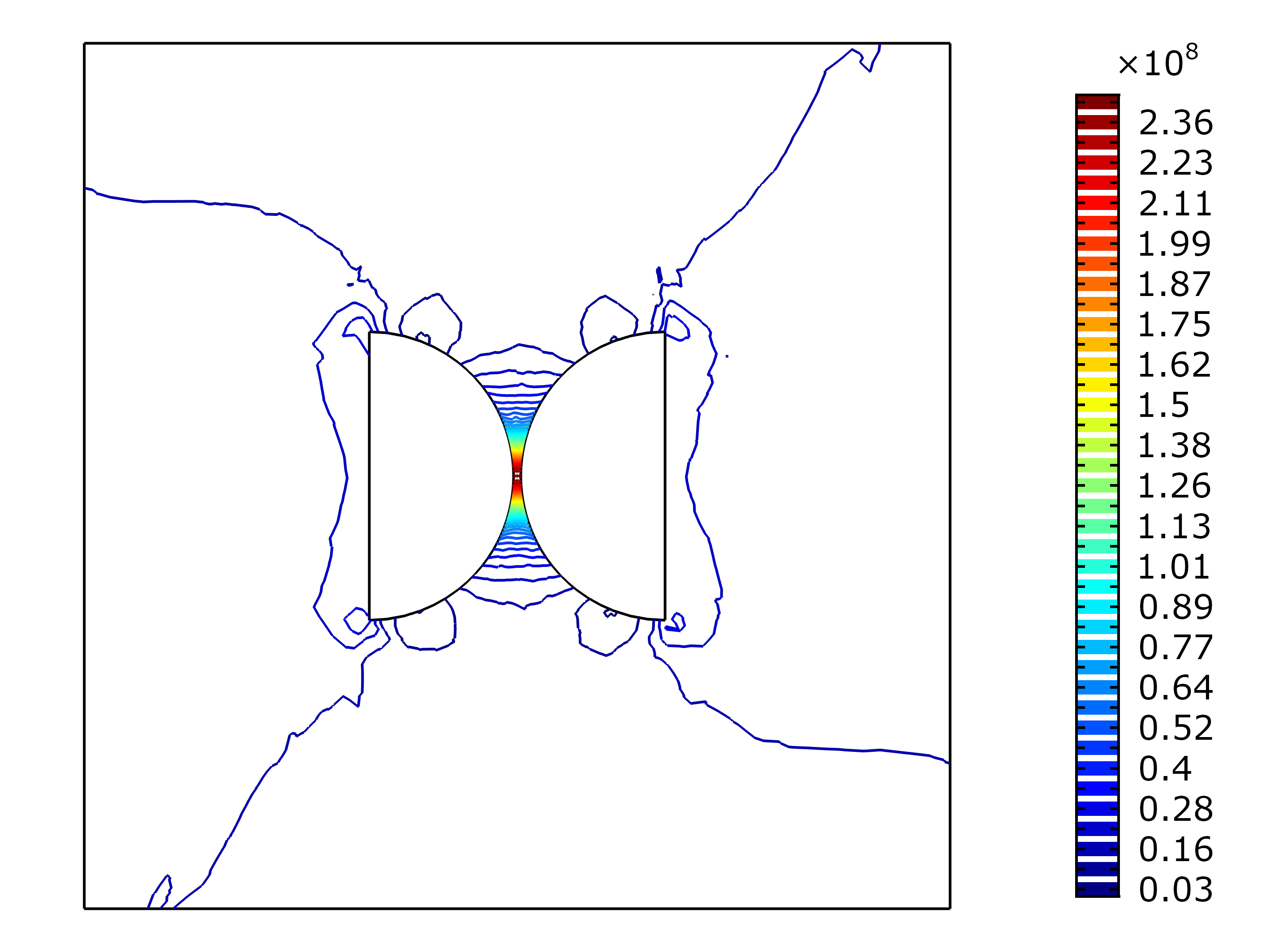}}\\
	\subfigure[ $\epsilon=0.1,3D$]{\includegraphics[width=0.4\textwidth]{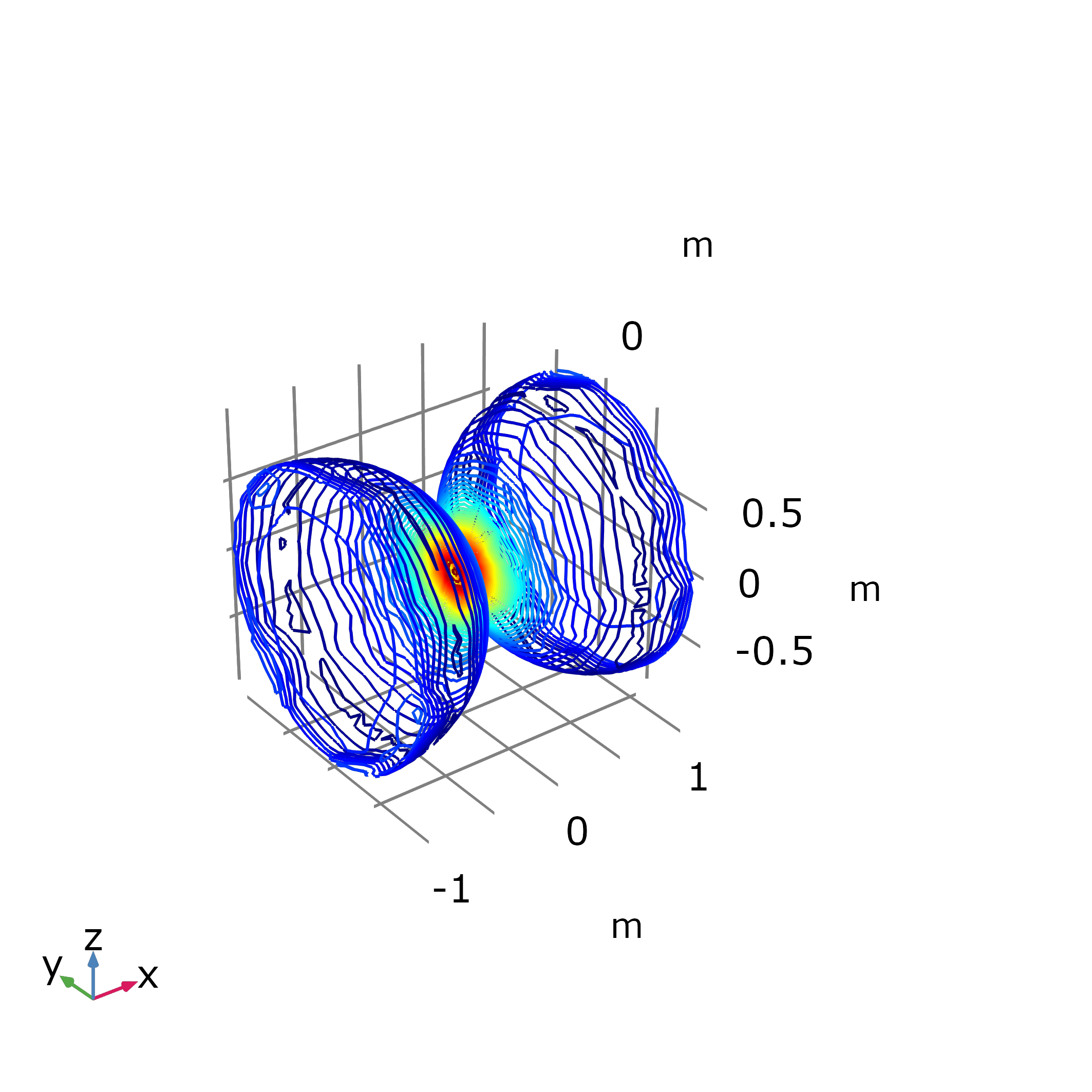}}
	\subfigure[ $\epsilon=0.1$, the XY-plane]{\includegraphics[width=0.5\textwidth]{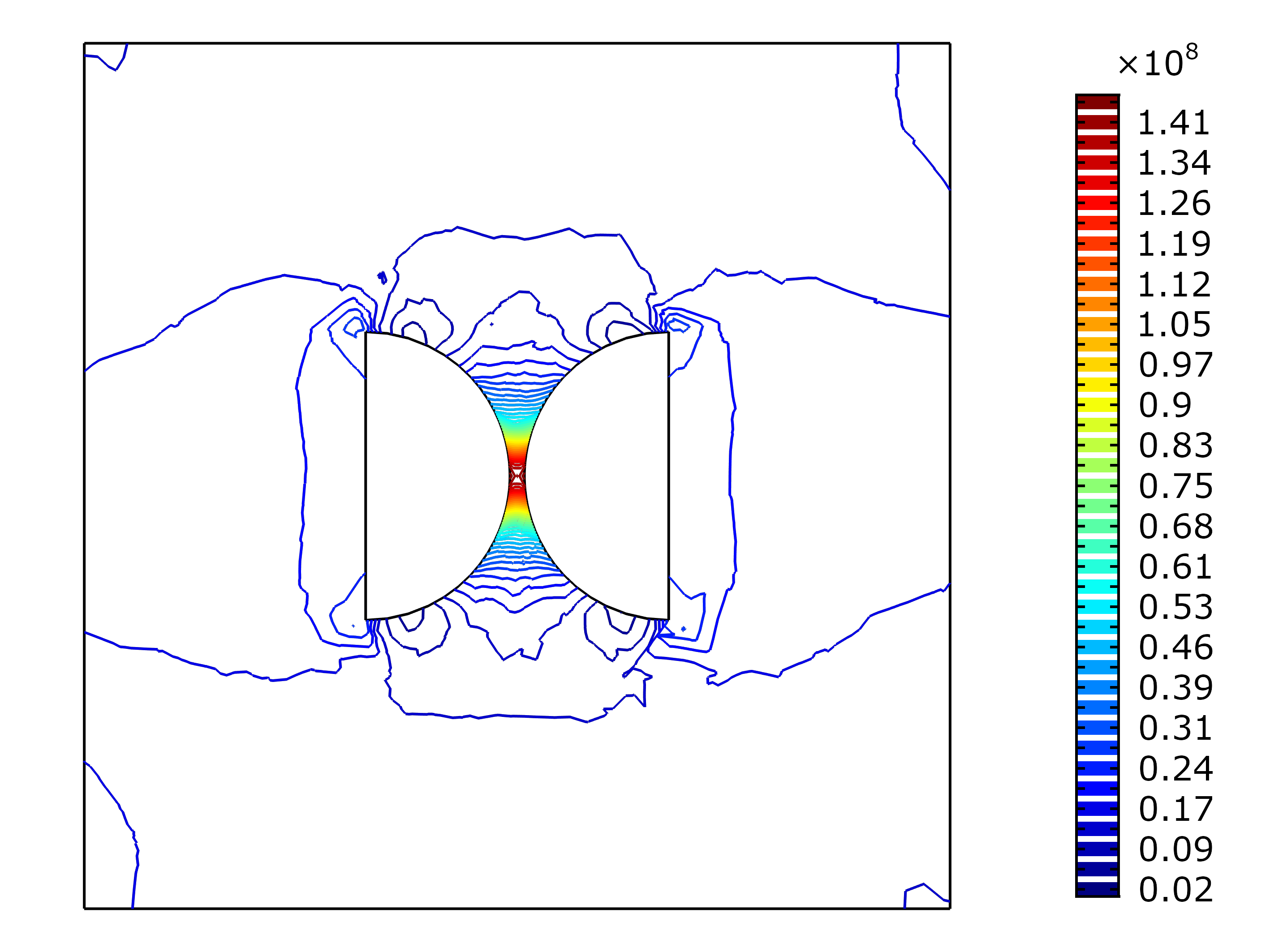}}
	\caption{\label{fig3}Hemisphere domains. Contour: Norm of electric field (V/m). }
\end{figure}

In Fig. \ref{fig3}, we plot the electric fields images when the asymptotic distance parameters are $\epsilon=0.05$ and  $\epsilon=0.01$, respectively. It can be clearly observed that when the two inclusions are hemispherical, a significant blow-up phenomenon still occurs in the area between the two inclusions.
\begin{figure}[!htpb]
	\centering
	\subfigure[ Numerical results]{\includegraphics[width=0.5\textwidth]{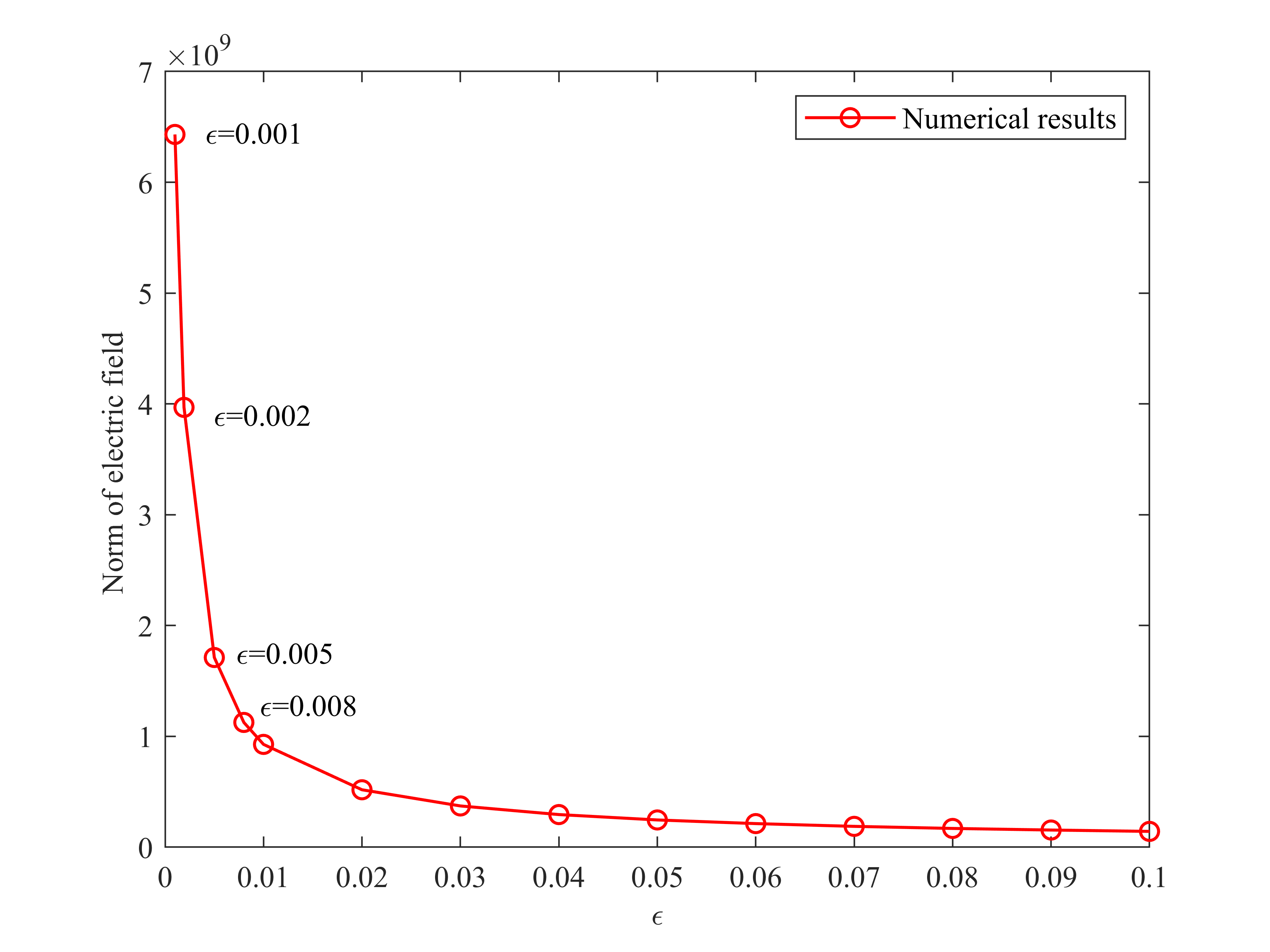}}% lleHere is how to import EPS art0
	\subfigure[ Comparison of estimate results with numericalby results.]{\includegraphics[width=0.5\textwidth]{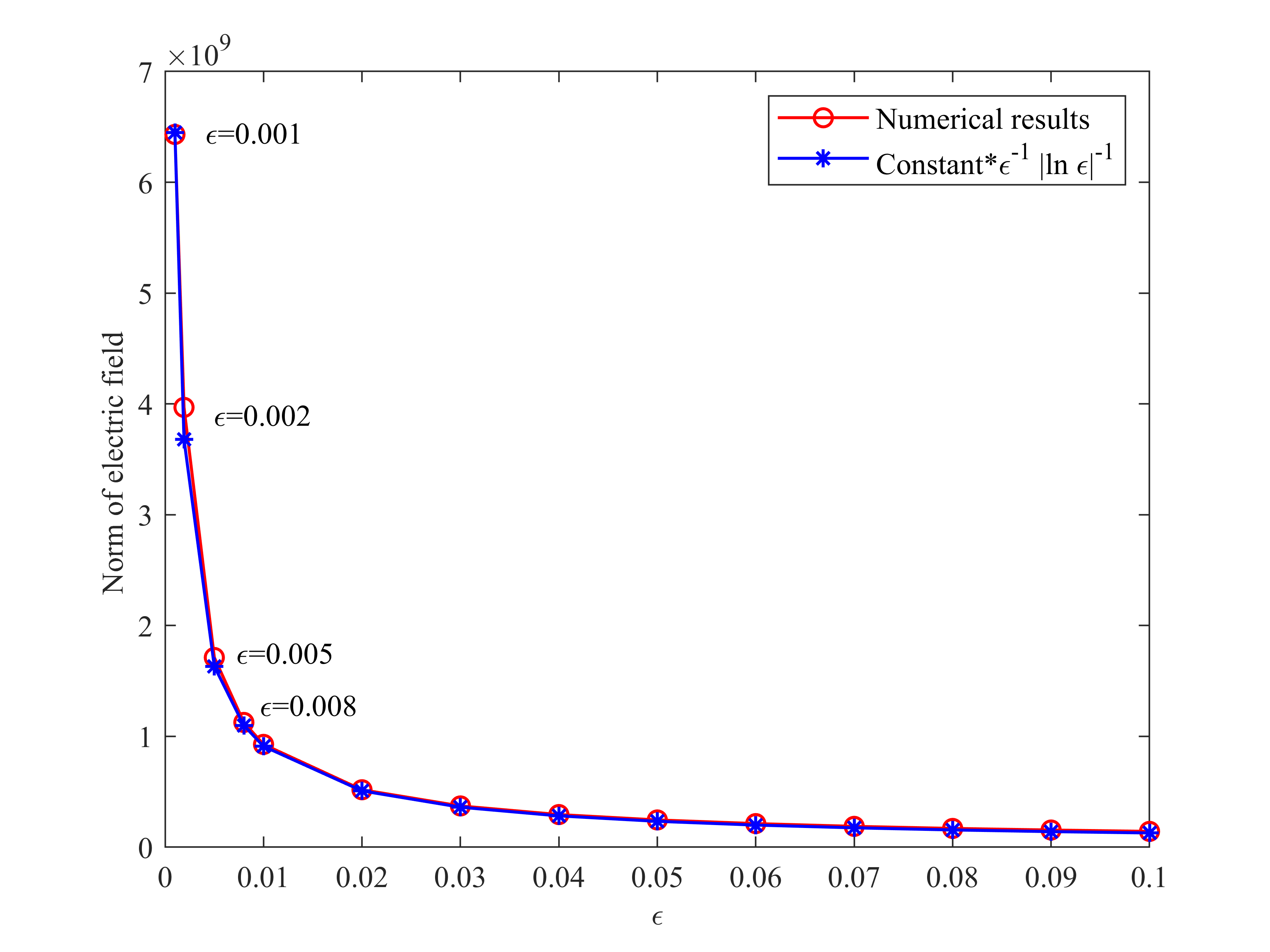}}\\% lleHere is how to import EPS art
	\caption{\label{fig4}Hemisphere domains. The variation of electric field norm with $\epsilon$. }
\end{figure}

Similar to Example 1, we calculate the electric fields for $\epsilon \in [0.001,0.1]$, Fig. \ref{fig4} shows that when the two inclusions in almost contact are hemispherical, it has the same blow up order as in the case of spherical domains.

\subsection{Gourd-shaped domains}
In this example, we consider the more general geometry, gourd-shaped domains(see Fig. \ref{model3}), which also have almost the same curvature in the nearly-touching surfaces with sphere domains. The physical settings are the same as example 1.
\begin{figure}[!htpb]
	\centering
	\subfigure[Gourd-shaped] {\includegraphics[width=0.3\textwidth]{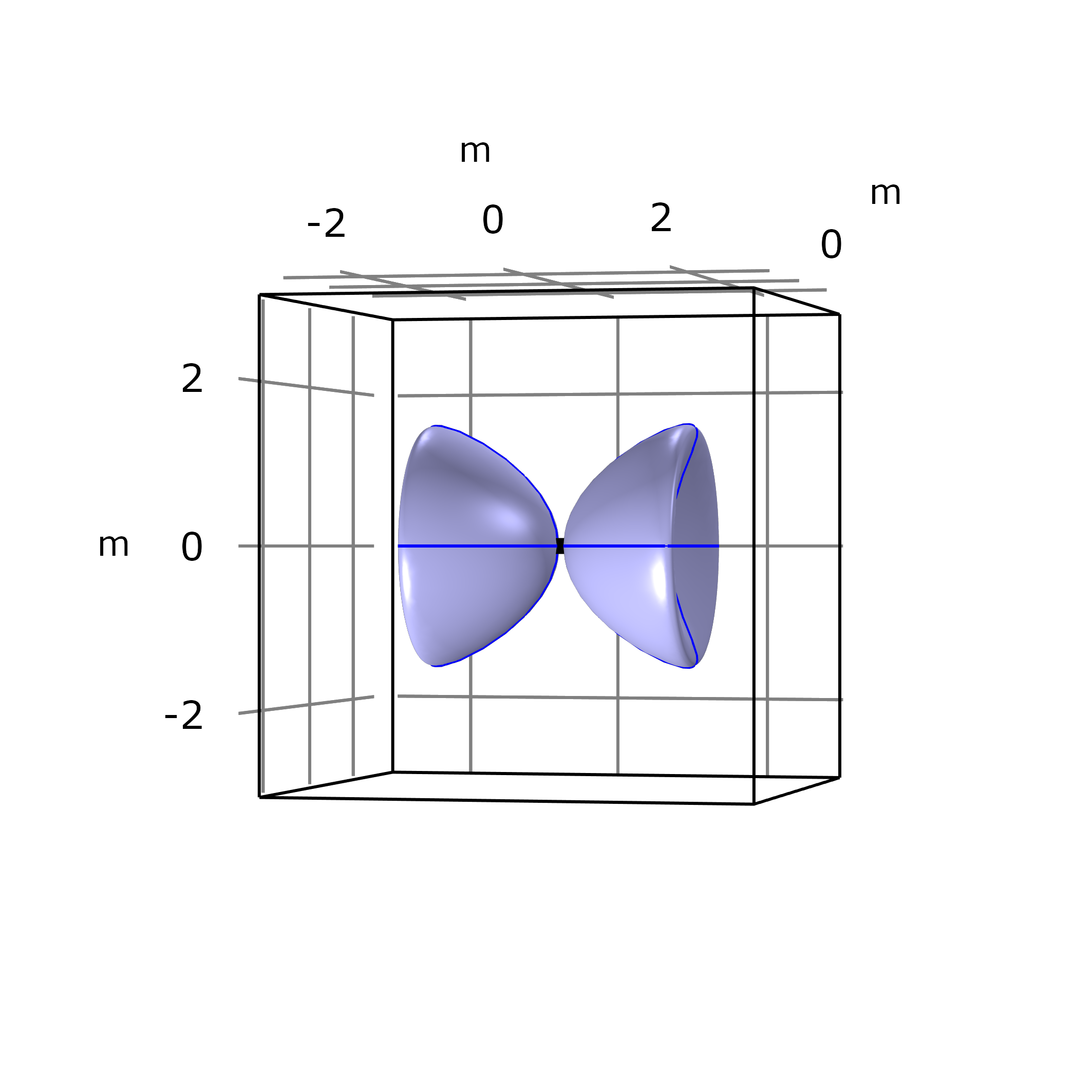}}
	\subfigure[Kite-shaped domain in 2D(the XY-plane)]{\includegraphics[width=0.3\textwidth]{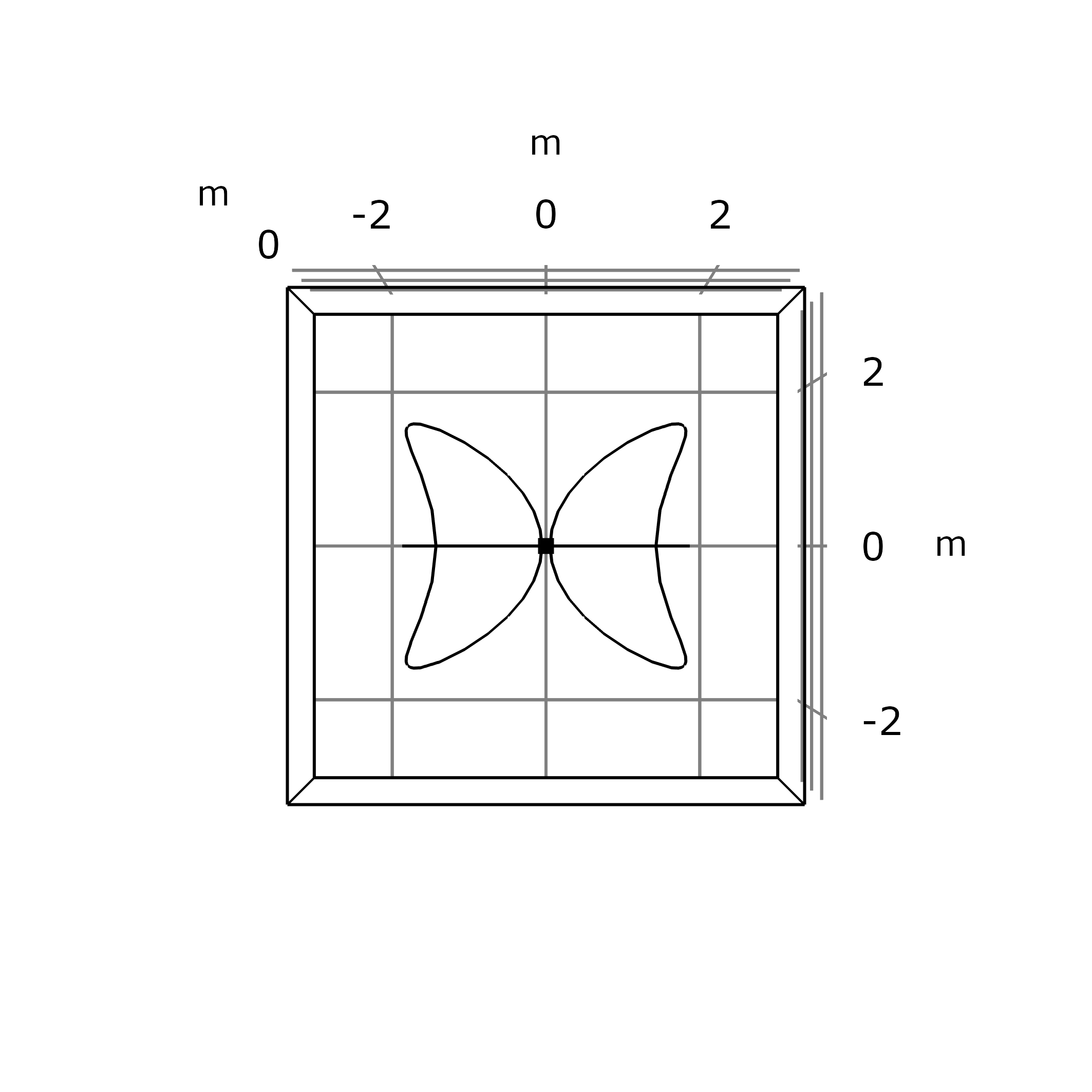}}
	\subfigure[Comparison with spherical shapes(the XY-plane)] {\includegraphics[width=0.3\textwidth]{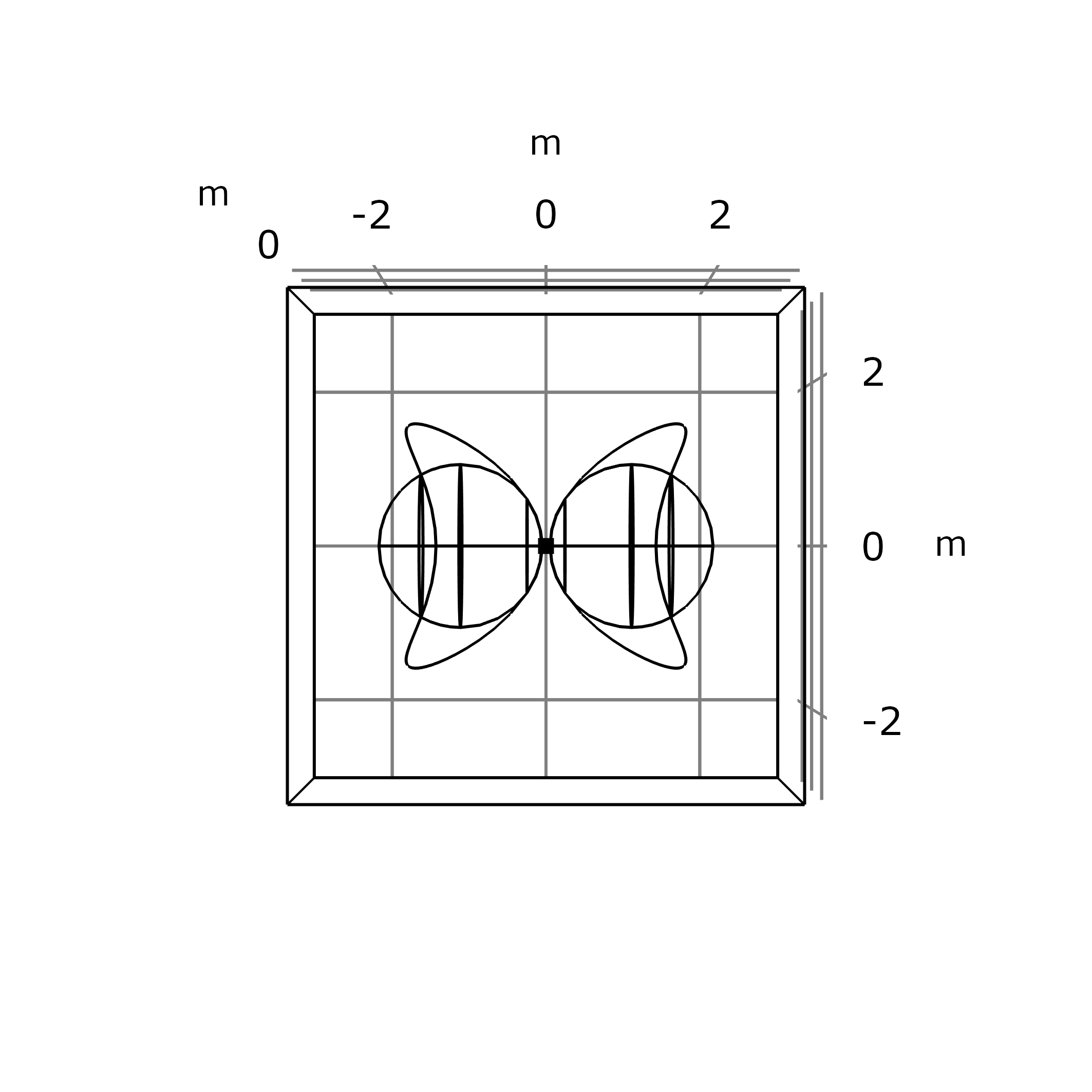}}\\
	\caption{\label{model3}Shapes considered in the numerical example. }
\end{figure}

\begin{figure}[!htpb]
	\centering
	\subfigure[ $\epsilon=0.05,3D$]{\includegraphics[width=0.4\textwidth]{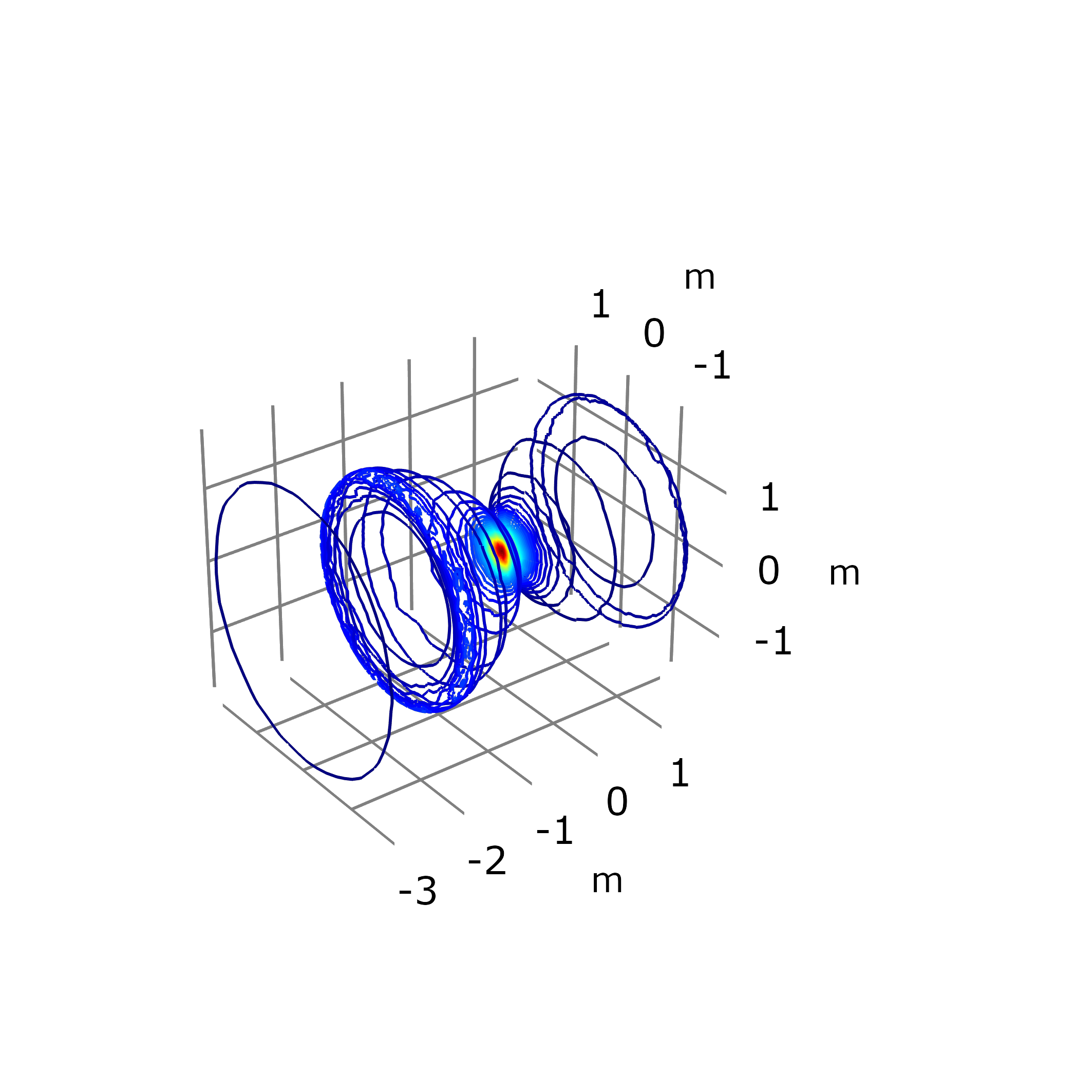}}% lleHere is how to import EPS art
	\subfigure[ $\epsilon=0.05$, the XY-plane]{\includegraphics[width=0.5\textwidth]{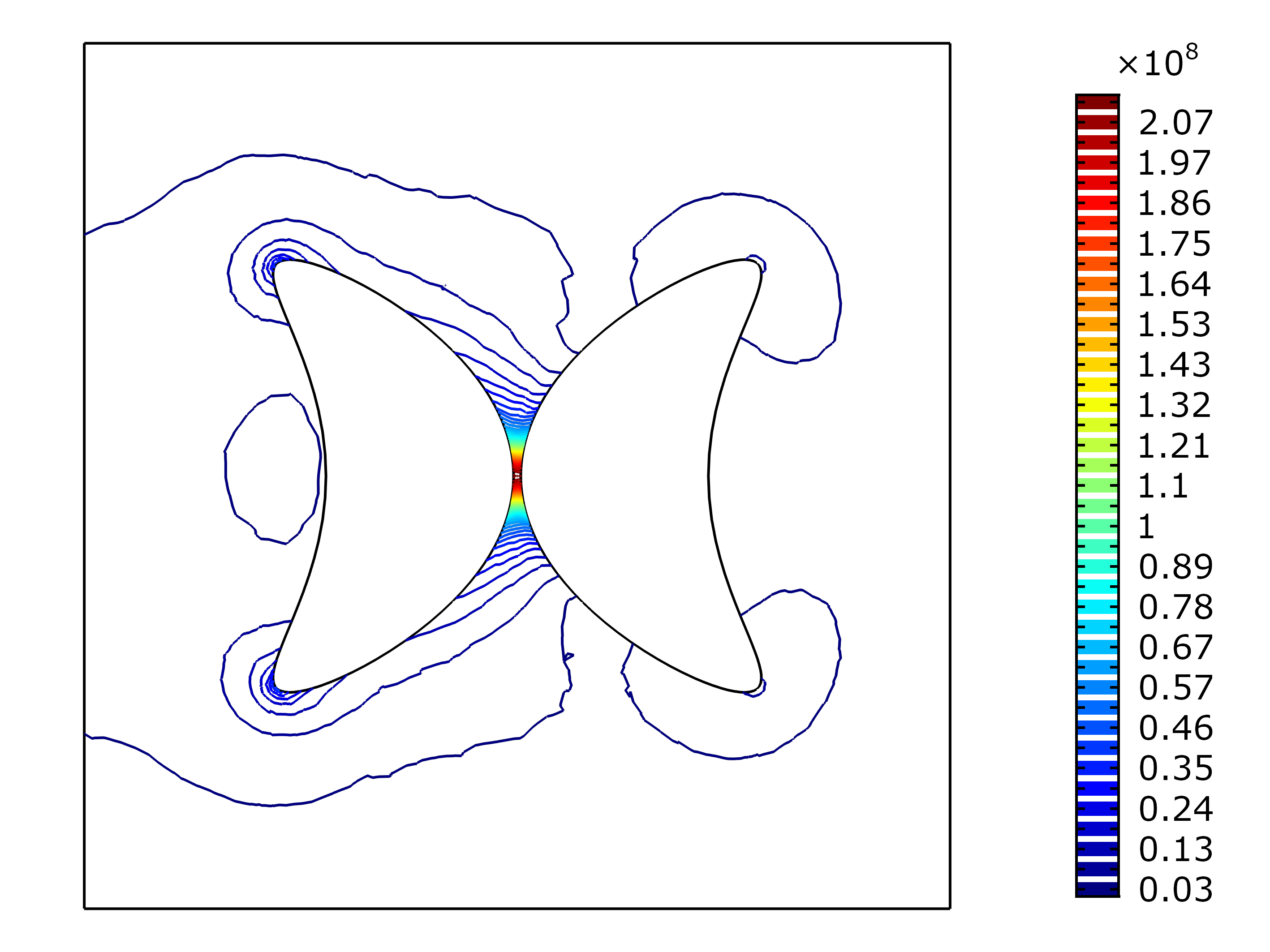}}\\
	\subfigure[ $\epsilon=0.1,3D$]{\includegraphics[width=0.4\textwidth]{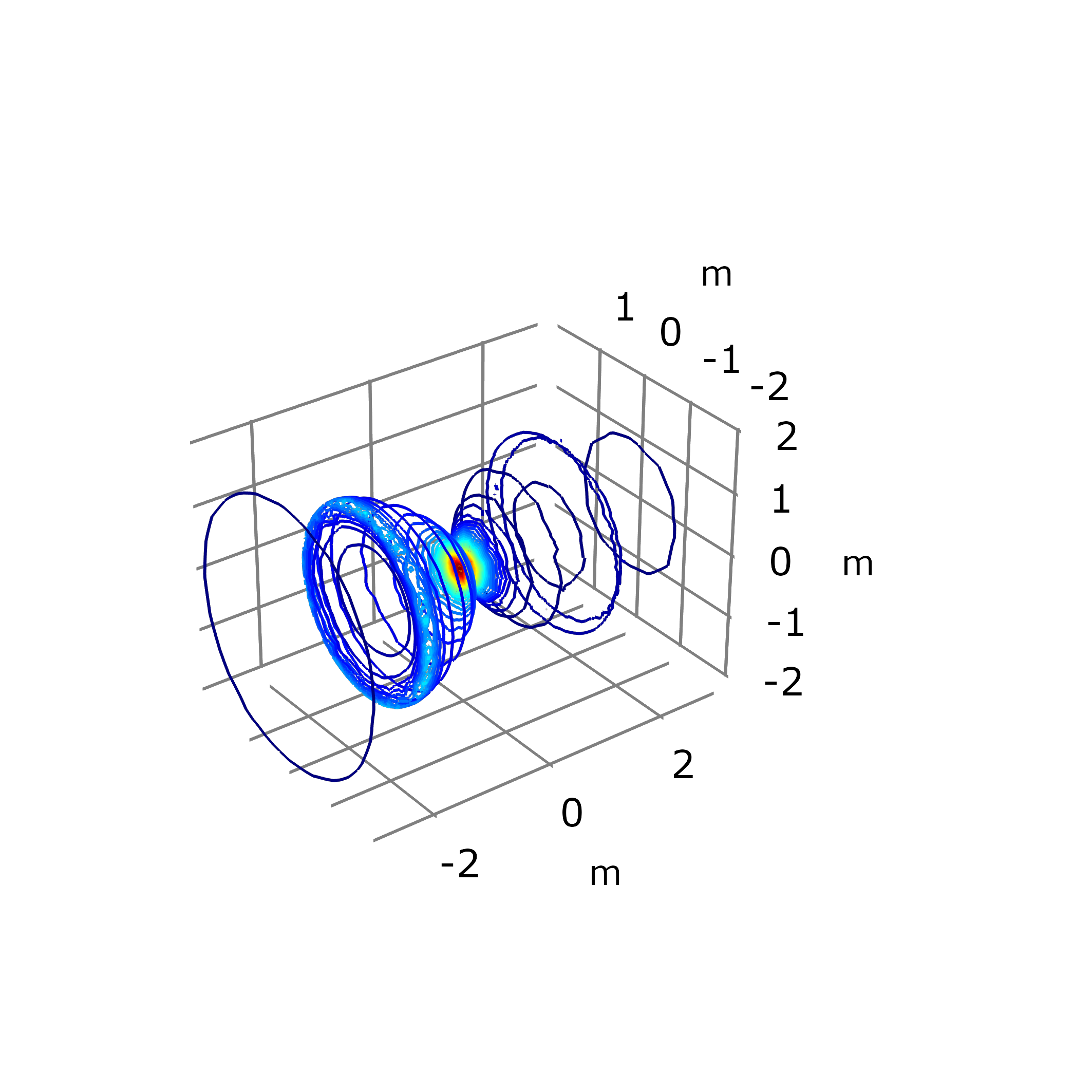}}% lleHere is how to import EPS art
	\subfigure[ $\epsilon=0.1$, the XY-plane]{\includegraphics[width=0.5\textwidth]{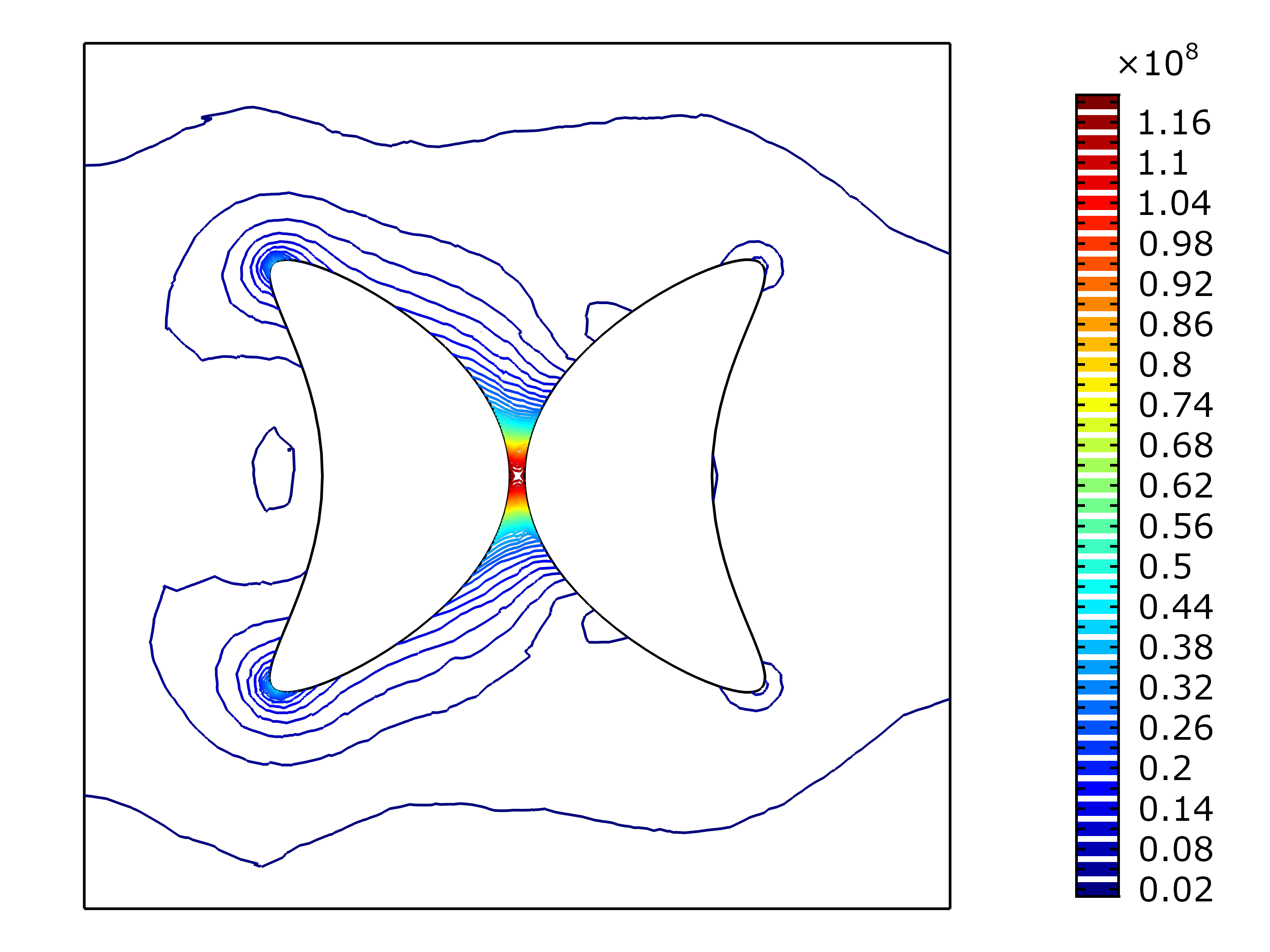}}
	\caption{\label{fig5} Gourd-shaped domains. Contour: Norm of electric field (V/m). }
\end{figure}
From Fig. \ref{fig5}, one can see that the electric field still exhibits blowup phenomena when the two inclusions are nearly touching.
\begin{figure}[!htpb]
	\centering
	\subfigure[ Numerical results]{\includegraphics[width=0.5\textwidth]{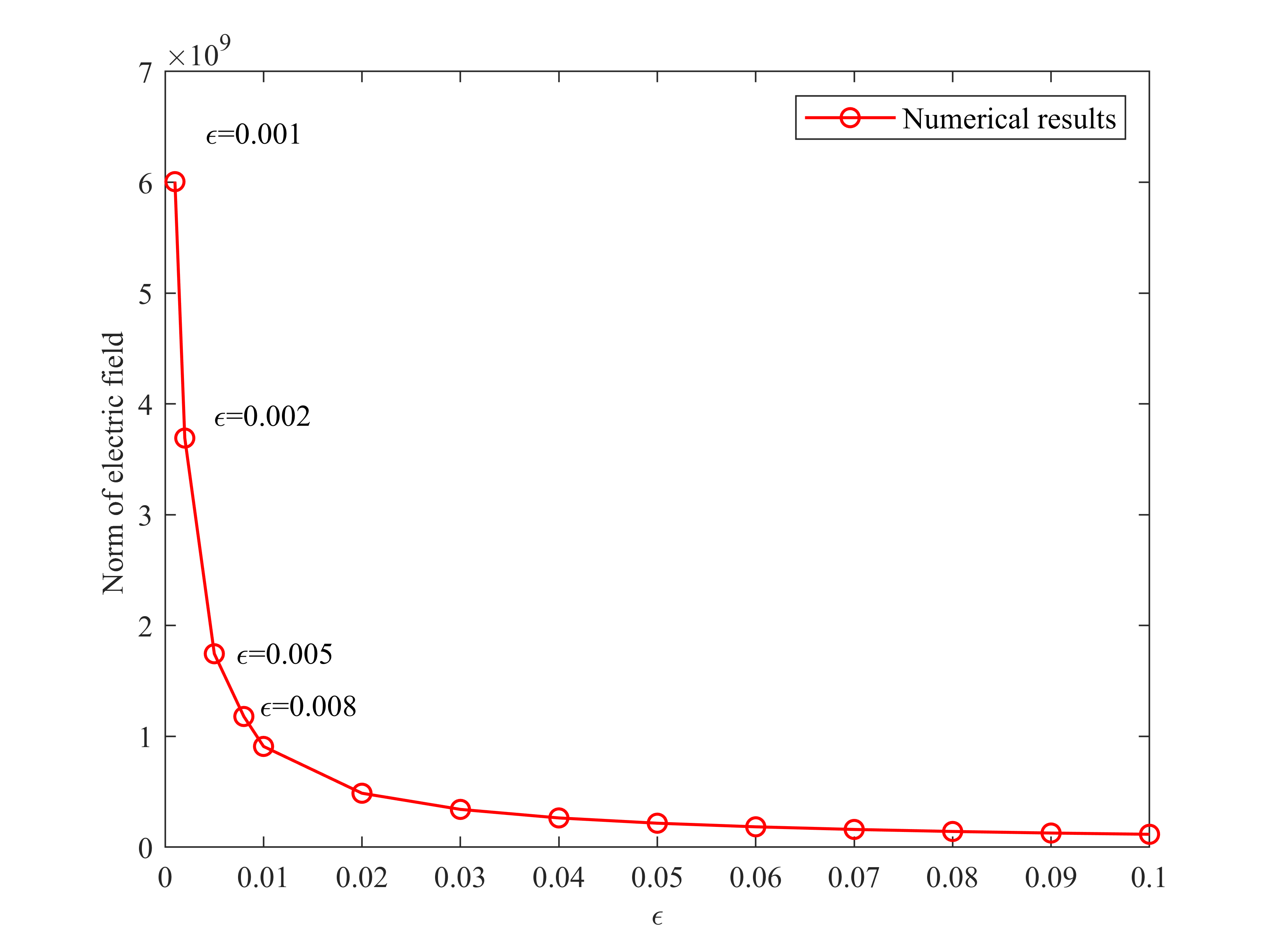}}% lleHere is how to import EPS art0
	\subfigure[ Comparison of estimate results with numericalby results.]{\includegraphics[width=0.5\textwidth]{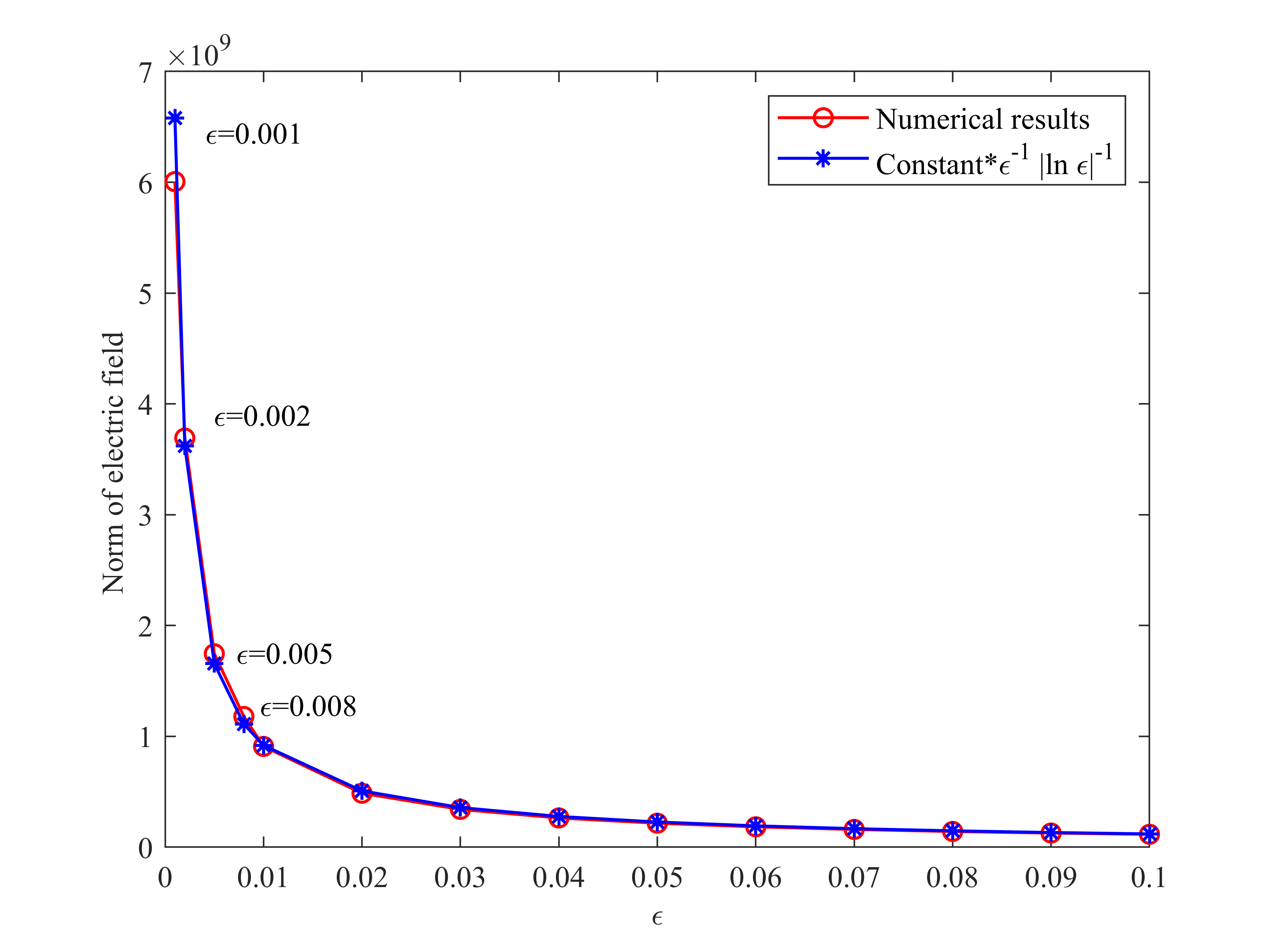}}\\% lleHere is how to import EPS art
	\caption{\label{fig6}Gourd-shaped domains. The variation of electric field norm with $\epsilon$. }
\end{figure}
Finally, in Fig. \ref{fig6}, similar numerical results and comparison with theoretical results are given,  they also matched well.

Through the last two examples, we find that the blowup rate is related to the curvature of the approximate contact surface of the inclusions. Even if the inclusions are not spherical, when the two almost touching inclusions have the same curvature near their almost touching area, the burst order still holds. This is our new discovery in the general geometric area, which may guide the further development of theoretical analysis in the future.
\section{Conclusions}

This paper studies the concentration of the electric field caused by two nearly touching inclusions in Maxwell equations. By using vector layer potential technology, we derive the representation of the solution to Maxwell's equations. Through low frequency asymptotic analysis, we derive the asymptotic expansion of the solution.
For the leading term $\bE_0$ of the asymptotic expansion, we proved that $\bE_0$ has a blowup order of $\epsilon^{-1} |\ln \epsilon|^{-1}$ by proving that $\bE_0=\nabla u$ and $u$ satisfies the perfect conductor system. Through some operator analysis, we proved the boundedness of the first order term $\bE_1$.

Through numerical experiments, we have confirmed the blasting phenomenon in the narrow region between two nearly touching high-contrast spherical conductors, and also confirmed the estimates of the blasting order given in this paper. Furthermore, through our numerical experiments, we have made new discoveries in more general geometric settings, namely, even if the nearly-touching inclusions are not spheres, the blasting order still holds when their curvature in the nearly-touching region is close to that of a sphere.

\section*{Acknowledgment}
The work of Y. Deng was supported by NSFC-RGC Joint Research Grant No. 12161160314.
The work of H. Liu was supported by the Hong Kong RGC General Research
Funds (projects 11311122, 11300821 and 12301420), the NSFC/RGC Joint Research Fund
(project N\_CityU101/21), and the ANR/RGC Joint Research Grant, A\_CityU203/19.

%\bibliographystyle{plain}
%\bibliography{vrp}

\end{document}